\numberwithin{equation}{section}
\newtheorem*{thm-plain}{Theorem}
\newtheorem{thm}{Theorem}[section]
\newtheorem{lem}[thm]{Lemma}
\newtheorem{cor}[thm]{Corollary}
\newtheorem{prp}[thm]{Proposition}
\theoremstyle{definition}
\newtheorem{dfn}[thm]{Definition}
\newtheorem{ntn}[thm]{Notation}
\newtheorem{rem}[thm]{Remark}
\newtheorem{exl}[thm]{Example}
\newtheorem*{exl-plain}{Example}
\newtheorem*{ack-plain}{Acknowledgements}
\theoremstyle{remark}
\newcommand{\A}{\vec A}
\newcommand{\N}{\mathbb{N}}
\newcommand{\mci}{\mathcal{I}} 
\newcounter{itri}
\renewcommand{\theitri}{(\roman{itri})}
\newenvironment{itr}{\begin{itemize}\setcounter{itri}{0}
\let\jtem\item
\def\item{\addtocounter{itri}{1}\jtem[\theitri]}}{\end{itemize}}
\newcounter{itai}
\renewcommand{\theitai}{(\arabic{itai})}
\begin{document}
\title{Dominant dimensions of two classes of finite dimensional algebras}
\author{ Muhammad Abrar}
\date{}
\begin{abstract}

The aim of this paper is to study the dominant dimension of two important classes
of finite dimensional algebras, namely, hereditary algebras and tree algebras. 
We derive an explicit formula for the dominant dimension of each class.

\end{abstract}
\maketitle

\section{Indroduction}
\let\thefootnote\relax\footnote{ This research  was fully supported by  Kohat University of Science and Technology (KUST),
Kohat, Pakistan, through the Human Resource Development (HRD) program of KUST.}

It is quite common to classify algebras by certain homological invariants. One such classification 
of finite dimensional algebras with respect to the length of an exact sequence of their projective-injective
bimodules was proposed by Nakayama \cite{nakayama58}. In \cite{tachikawak62} Tachikawa characterized $QF$-3
algebras by such length. Subsequently in 1964, Tachikawa \cite{tachikawak64} introduced the notion of dominant
dimension, where he studied the dominant dimension of $QF$-3 algebras as well. Later on, the classical theory of dominant dimension
has been developed by Mueller \cite{mueller68}, Tachikawa \cite{tachikawak70}, Morita \cite{morita70} 
and few others (e.g. \cite{suzuki71}). The dominant dimension also provides one of the two
conditions (the other one also is about a homological dimension: global dimension) in Auslander's \cite{auslander71} celebrated characterization of finite representation type,
that is of the representation category being finite.

In applied sense,  dominant dimension has been used not only to characterize the double centralizer property but also to 
classify certain algebras. In \cite{koenig2001} the dominant dimension has been used to prove several 
Schur-Weyl-dualities. Though the theory of dominant dimension is growing rapidly in applied context,
  see \cite{koenig2011,koenig2010}, the precise value of dominant dimension for many well-known classes 
of algebras is still unknown. Algebras of infinite dominant dimension also have been of interest of many ( e.g. \cite{auslander75})
in connection with Nakayama's conjecture, but the above perspective  also suggests to investigate the information itself about the
 dominant dimension (finite) of many important classes of finite dimensional algebras.

In this paper we study the dominant dimensions of  two well-known classes of algebras, namely hereditary algebras and tree algebras.
 We use quiver-theoretic techniques and give explicit combinatorial proofs of the results.

In Section \ref{sec: hereditary}, we consider hereditary algebras (quiver), and establish that a branching vertex plays a key role 
to characterize such class of algebras in terms of dominant dimensions. We conclude this section by
\begin{thm-plain}{\bf\ref{thm:hereditary}.}
 Let $A=KQ$ be a path algebra of a finite, connected and acyclic quiver $Q$. Then
 $$
       dom.dimA =
    \begin{cases}
       1 &\text{ if } Q=\A_n\\
       0 &\text{ if } Q\ne\A_{n}
       \end{cases}
    $$
 where $\A_n$ is linearly oriented.
\end{thm-plain}

In Section \ref{sec:An}, we pass through the quotients of $\A_n$, and  show that the quotients having free vertices have
dominant dimension not greater than two.
 We establish that
\begin{thm-plain}{\bf\ref{thm: lower-upper bound}.}
Let $A$ be a bound quiver algebra of $\A_n$. Then for a fixed $n\geq3$ $$1\leq dom.dimA\leq n-1.$$
\end{thm-plain}
It is also shown that these bounds are attained by quotients of $\A_n$, and that  every natural number occurs as dominant dimension. For the full set of fully overlapped
zero relations of the same length $m$, we  derive an explicit formula for $dom.dimA$. Indeed, in Proposition \ref{prp:formula A_n}, we prove that 
\emph{
\begin{itr} 
\item For $m=2$, $ dom.dimA=n-1$.
\item For $m\geq3$ and $n\in m\N+j$
\[
  dom.dimA=
    \begin{cases}
       \frac{2n-(m+2j)}{m} & j=1,2,3,\cdots ,m-2\\
       \frac{2n-2j}{m} & j=m-1\\
       \frac{2n-j}{m} & j=m.
    \end{cases}   \]         
\end{itr}}

In Section \ref{sec: general tree}, we study the dominant dimension of tree $(\neq\A_n)$ algebras. We define arms of a tree, and split trees into two classes, namely,
trees without arms and trees with arms. The following is an example of such two classes. 
\begin{exl-plain} The tree
 \[\begin{xy}
\xymatrix {
&2\ar[d]\\
1\ar[r]&3\ar[r]\ar[d]&5\\
&4
}
\end{xy}\]
has no arm, while the tree
\[\begin{xy}
\xymatrix {
&&2\ar[d]\\
a\ar[r]^{\alpha}&1\ar[r]&3\ar[r]\ar[d]&5\ar[r]^{\beta}&b\\
&&4
}
\end{xy}\]
has two arms: $\alpha$ a left arm and $\beta $ a right arm.
\end{exl-plain}
Like hereditary algebras, it turns out that the dominant dimension of tree ($\neq\A_n$) algebras also can not exceed one. To deal with trees without arms,
we define ( see Definition \ref{dfn:*}) conditions $(\ast)$  as a set of relations satisfying:
\begin{itr}
 \item For each source $a$ and sink $c$, both $P(a)$ and $I(c)$ are uniserial.
\item For each $i\in Q_0'$, $socP(i)=\oplus S(c)$, where each $c$ is a sink.
\item  For each $i\in Q_0'$, $topI(i)=\oplus S(a)$, where each $a$ is a source.
\end{itr}
Consequently, we establish
\begin{thm-plain}{\bf\ref{thm:without arms}.}
   Let $R'$ be a set of zero relations on $Q'$. Then
    \[
       dom.dimB' =
    \begin{cases}
      1 &\text{ if } R'\text{ satisfies the conditions } (\ast)\\
       0 &\text{ otherwise.  }
       \end{cases}
    \]
\end{thm-plain}

In Subsection \ref{sec:with arms}, we pay attention to the trees with arms. Of course, trees with arms having, as set of relations, 
the conditions $(\ast)$ only act just like trees without arms, as Proposition \ref{prp:ddB=0} says. But in general, sets
of relations on trees with arms are bigger than and might be containing the  conditions $(\ast)$. Hence this leads to the 
conditions $(\ast\ast)$, an extension of the conditions $(\ast)$. We define such conditions as (see Definition \ref{dfn:**}):

Let $R$ and $R'$ be sets of zero relations on $Q$ and $Q'$, respectively, such that $R'\subseteq R$. Then $R$ is said to
satisfy the conditions $(\ast\ast)$  if
 \begin{itr}
\item $R'$ satisfies the conditions $(\ast)$.
\item $\forall $ $i$ in left arm, $socP(i)=S(i')$ for some successor $i'\notin Q_0''$ of $i$.
\item  $\forall $ $j$ in right arm, $topI(j)=S(j')$ for some predecessor $j'\notin Q_0''$ of $j$.
\end{itr}
Consequently, we have
\begin{thm-plain}{\bf\ref{thm:with arms}.}
 Let $R$ and $R'$ be sets of zero relations on $Q$ and $Q'$ respectively, 
such that $R'\subseteq R$ and $R\cap S'=R'$. Then
  \[
       dom.dimB =
    \begin{cases}
       1 &\text{ if } R \text{ satisfies the conditions } (\ast\ast)\\
	0 &\text{ otherwise.}
       \end{cases}
    \]
  
\end{thm-plain}

This paper is a part of a comprehensive project on dominant dimensions where finite dimensional algebras
are to be characterized explicitly by precise values or by a range of values of their dominant dimensions.

\section{Preliminaries}
Here we recall some basic notions from quiver theory and make some useful conventions. We also give few elementary results.

 Throughout, $K$ is assumed to be a field, and  $Q=(Q_0,Q_1,s,t)$  a finite, connected and acyclic quiver,
and   $\A_n$ a linearly oriented quiver having  $Q_0=\{1,2,3\cdots,n\}$ as the set of vertices,
 where $n\in\N$.
 We call $Q$ a \emph{tree} if there is a unique path between any two vertices in $Q_0$.
Let $x$ be a path in $Q$. We denote by $Q_0^x$ and $Q_1^x$ respectively the set of all vertices in $x$ and
the set of all arrows in $x$.  We say the path $x$ contains a vertex $a$ if $a\in Q_0^x$.
If there exists in $Q$ a path from $a$ to $b$, then $a$ is said to be a \emph{predecessor} of
$b$, and $b$ is said to be a \emph{successor} of $a$. In particular, if there exists
an arrow $a\rightarrow b$, then $a$, written $b^{-}$, is said to be an \emph{immediate predecessor} of
$b$, and $b$, written $a^{+}$, is said to be an \emph{immediate successor} of $a$.
We define a \emph{ relation } in $Q$ with coefficients in $K$ as a $K$-linear combination of paths of length
at least two having the same source and target. A \emph{ zero }(or monomial) relation in $Q$ is a relation comprising 
only one term of $K$-linear combination,  see \cite{assem} for details. Any zero relation, by definition, is minimal.
By \emph{length} of a relation we mean the number of arrows in the relation. The source and the target
of a zero relation are defined as the source (target) of the first (last) arrow in the zero relation.
A path $x$ of length at least one in  $Q$ is said to be \emph{maximal} if it is not a subpath of any other path in $KQ$ or $KQ/\mci$.

All the projective $P(j)$  and the injective  $I(j)$ modules under consideration are the indecomposable left $A$-modules
corresponding to some vertex $j\in Q_0$, where $A$ is either a path algebra or a bound quiver algebra.
 Any projective (injective) module which is injective (projective), up to isomorphism, will
be called \emph{ projective-injective}. Any zero module, by definition,  is projective-injective.
Throughout, an injective envelope of a module $M$ is denoted by $EM$.

The following definition has a fundamental role when dealing with hereditary and tree algebras.
\begin{dfn} A vertex $a$  in  $Q$  is said to be a \emph{branching vertex}
if there exist distinct arrows  $\alpha,\beta\in Q_1$ such that $s(\alpha)=a=s(\beta)$  or $t(\alpha)=a=t(\beta)$.

By definition, $\A_n$ is a branching-free tree.
\end{dfn}
\begin{dfn} 
\label{dfn:free vertex}
A vertex $a$ in $Q$  is said to be a \emph{ free vertex} if it is neither the source nor
the target of any zero relation.
\end{dfn}
\begin{dfn}
Let $M$ be an $A$-module. $M$ is said to have the \emph{ dominant dimension } at least $n\in \N$,
 written $dom.dimM\geq n$, if there exists a minimal injective resolution 
  \[0\rightarrow M\rightarrow I_1\rightarrow I_2\rightarrow\cdots \rightarrow I_n\rightarrow\cdots\]
of $M$ such that all the modules $I_j$ with $1\leq j\leq n$ are projective-injective.
\end{dfn}
If the injective envelope $I_1$ of $M$ is not projective, we set $dom.dimM=0$. In case $dom.dimM\geq n$ and $dom.dimM\ngeq n+1$,
we say $dom.dimA= n$. If no such $n$ exists, we write $dom.dimM=\infty$.
The dominant dimension of an algebra $A$ is defined as the dominant dimension of the left regular 
module $_{A}A$, that is,   $dom.dimA=dom.dim_{A}A$.
 A self-injective algebra has infinite
dominant dimension, since all of its projective modules are injective.
 An obvious consequence of the definition is that 
 $$dom.dim(M\oplus N)=min (dom.dimM,dom.dimN)$$ where  $M$ and $N$
are finite dimensional $A$-modules. Because the dominant dimension of a projective-injective module is infinite,
 this consequence implies to forget the trivial part ( where every projective is injective )
of the minimal injective resolution of $_{A}A=\oplus M_i$.

\begin{lem}
 \label{lem:minimal}
Let 
$I^{\bullet}: 0\rightarrow M\rightarrow I_1\rightarrow I_2\rightarrow\cdots \rightarrow I_n\rightarrow0$
be an injective resolution of a non-injective module $M$ such that $I_j$ with $1\leq j\leq n-1$ is projective.
If $I^{\bullet}$ is minimal, then $I_n$ is not projective.
\end{lem}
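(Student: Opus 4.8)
The plan is to prove the contrapositive: assuming $I_n$ is projective, I would show that the resolution $I^{\bullet}$ cannot be minimal. Since $M$ is non-injective and $I^{\bullet}$ is an injective resolution with $I_j$ projective for $1\leq j\leq n-1$, the hypothesis ``$I_n$ projective'' would make \emph{every} term $I_1,\dots,I_n$ projective-injective. First I would recall that over a finite dimensional algebra, the minimality of an injective resolution means each map factors through the injective envelope of its image, equivalently the socle of $I_k$ maps onto the socle of the cokernel at each stage; in particular $I_n$ must be the injective envelope of the image of $I_{n-1}\to I_n$, so the socle of $I_n$ is essential in the image of $d_{n-1}$.

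The key observation is a dimension/Euler-characteristic argument combined with the structure of the complex. Since $I^{\bullet}$ is exact with $I_n$ in the last spot and zero afterward, the map $d_{n-1}\colon I_{n-1}\to I_n$ is surjective. So $I_n$ is a quotient of the projective module $I_{n-1}$; being also injective, $I_n$ is projective-injective. The crucial step is then to observe that in a minimal injective resolution, no $I_k$ with $k\geq 1$ can be a direct summand that ``splits off'' — and here I would argue that if the whole resolution consists of projective-injectives, then by the remark in the text (that the trivial projective-injective part of the minimal injective resolution of $M$ can be discarded, i.e. $\mathrm{dom.dim}$ counts precisely the leading projective-injective terms) the module $M$ would itself have to be injective (the resolution would be the injective envelope $0\to M\to I_1$ with $I_1$ projective-injective and all higher terms forced to vanish by minimality), contradicting the hypothesis that $M$ is non-injective.

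More concretely, the heart of the matter is: a minimal injective resolution of a non-injective module cannot be a bounded exact complex whose \emph{every} nonzero term is injective, because exactness would force the complex $0\to M\to I_1\to\cdots\to I_n\to 0$ to be a finite injective resolution, hence $M$ would have finite injective dimension $n$, but minimality plus all terms injective would allow us to peel off $I_n\hookrightarrow$ nothing — rather, the right way is: run the argument through the cokernels $C_k=\mathrm{coker}(d_{k-1})$, note $C_0=M$, each $C_k\hookrightarrow I_{k+1}$ is an injective envelope by minimality, and $C_{n-1}\cong I_n$ is injective; but if $I_{n-1}$ is also injective then $C_{n-2}\hookrightarrow I_{n-1}$ with $I_{n-1}$ injective means $C_{n-2}$ is a submodule of an injective with injective cokernel $C_{n-1}$, and by minimality $I_{n-1}=E(C_{n-2})$; iterating downward, $I_1=E(M)$ and all $C_k$ are obtained, giving $\mathrm{inj.dim}\,M\leq n$ — this is fine, injectives can certainly have injective envelopes. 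The actual contradiction must come from \emph{projectivity}: if $I_n$ were projective, then the surjection $I_{n-1}\twoheadrightarrow I_n$ splits, so $I_n$ is a direct summand of $I_{n-1}$; write $I_{n-1}=I_n\oplus J$. Then the complex $0\to M\to I_1\to\cdots\to I_{n-2}\to J\to 0$ is still an injective resolution of $M$ (the $I_n$ summand cancels against $I_n$), and it is shorter — but to contradict \emph{minimality of the given resolution} I instead observe the splitting $I_{n-1}=I_n\oplus J$ is incompatible with $d_{n-1}$ being a minimal map onto $I_n$ unless $J=0$, and if $J=0$ then $I_{n-1}\cong I_n$ and one checks $d_{n-2}\colon I_{n-2}\to I_{n-1}$ has image of the same socle type, forcing $I_{n-2}\to I_{n-1}$ to be zero by minimality, hence $M$ injective after all. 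The main obstacle is organizing this descent cleanly: I expect the cleanest route is to phrase everything in terms of the cokernels $C_k$ and the fact that a minimal injective map $g\colon X\to I$ with $I$ injective \emph{and} $\mathrm{coker}\,g$ injective and nonzero cannot have $I$ projective when $X$ (hence $\mathrm{coker}\,g$) is not injective-projective; the delicate point is excluding the case where the relevant cokernel is zero, which is exactly where the non-injectivity of $M$ gets used. I would therefore structure the proof as an induction on $n$, base case $n=1$ being immediate ($I_1=E(M)$ projective would make $M$ have projective injective envelope, but then one uses that $0\to M\to I_1\to 0$ forces $M\cong I_1$ injective, contradiction), and the inductive step passing to $C_1=\mathrm{coker}(M\hookrightarrow I_1)$, which is non-injective precisely because $I_1$ is the \emph{essential} extension and the resolution has length $n\geq 2$.
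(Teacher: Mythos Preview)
You have the paper's entire proof buried in one clause of your third paragraph and then talk yourself out of using it. The paper argues exactly this: assume $I_n$ is projective; then the epimorphism $d_{n-1}\colon I_{n-1}\twoheadrightarrow I_n$ splits, so $I_n$ is a direct summand of $I_{n-1}$; hence $I^{\bullet}$ is not minimal. That is the whole proof.

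The reason the final implication holds is precisely the characterization of minimality you stated at the outset and then abandoned: $I_{n-1}$ must be the injective envelope of $\mathrm{im}(d_{n-2})=\ker(d_{n-1})$, so this submodule is essential in $I_{n-1}$. But the splitting gives $I_{n-1}\cong \ker(d_{n-1})\oplus I_n$ with $I_n\neq 0$, so $\ker(d_{n-1})$ has a nonzero complement and cannot be essential. Done. Equivalently, and this is the version you wrote and then second-guessed, cancelling the $I_n$ summand against the last term produces a strictly shorter injective resolution $0\to M\to I_1\to\cdots\to I_{n-2}\to J\to 0$; over a finite dimensional algebra the minimal injective resolution is a direct summand of every injective resolution and in particular has minimal length, so this already contradicts minimality of $I^{\bullet}$.

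Everything else in your plan---the Euler-characteristic digression, the descent through the cokernels $C_k$, the case split on whether $J=0$, the proposed induction on $n$---is unnecessary, and some of it is muddled (e.g.\ the phrase ``$d_{n-1}$ being a minimal map onto $I_n$'' conflates the minimality condition, which constrains the \emph{inclusion} $C_{n-2}\hookrightarrow I_{n-1}$, with a condition on the outgoing surjection). Note also that the hypothesis that $I_1,\dots,I_{n-1}$ are projective is never used; only the projectivity of $I_n$ and the minimality of the resolution matter.
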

\begin{proof}
 Let us assume, on the contrary, that $I_n$ is  projective. Then the epimorphism
$ I_{n-1}\rightarrow I_n\rightarrow0$ splits. This implies that $I_n$ is a direct summand of $I_{n-1}$.
Consequently,  $I^{\bullet}$ is not minimal, but $I^{\bullet}$ was minimal. Hence $I_n$ is not projective.

\end{proof}

In general, an upper bound of $dom.dimA$ for many algebras is not known yet. But, in particular, directed algebras are bounded above by 
the number of their projective-injective modules, as shown below.
\begin{thm}
 \label{thm:lub}
Let $A$ be a bound quiver algebra of a finite, connected and acyclic quiver $Q\neq\A_1$. Then $dom.dimA\leq d\leq n-1$, where 
$|Q_0|=n\in\N$ and $d$ is the number of projective-injective $A$-modules. 
\end{thm}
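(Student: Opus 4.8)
The plan is to bound $dom.dim A$ by counting how many of the modules in a minimal injective resolution of ${}_AA$ can possibly be projective-injective, and then to bound that count by $n-1$. Write ${}_AA = \bigoplus_{i} P(i)$, the sum over the $n$ vertices of $Q$. By the direct-sum formula $dom.dim(M\oplus N)=\min(dom.dim M, dom.dim N)$ noted after the definition, it suffices to work with a single indecomposable projective $P=P(i)$ and track its minimal injective resolution $0\to P\to I_1\to I_2\to\cdots$. The key observation is that each $I_j$ appearing (for $j$ in the projective-injective range) is a direct sum of indecomposable projective-injective modules, and there are only $d$ isomorphism classes of those. So I first want to argue that in a minimal injective resolution, no indecomposable projective-injective module can be repeated across the terms $I_1,\dots,I_d$ in a way that lets the resolution run longer than $d$ steps while staying projective-injective; more precisely, I will show that if $I_1,\dots,I_d$ are all projective-injective then the resolution must terminate (the relevant cokernel is $0$), forcing $dom.dim A \le d$.

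The mechanism for termination is the following: because $Q$ is finite, connected and acyclic, the algebra $A$ is directed, hence of finite global dimension, so every module has a finite injective resolution; in particular the minimal injective resolution of $P$ has finite length, say $\ell$. If all of $I_1,\dots,I_d$ were projective-injective and the resolution had not yet terminated, then the image $M_{d}$ of $I_{d-1}\to I_d$ would be a nonzero non-injective submodule sitting inside a projective-injective module, and its injective envelope $I_{d+1}$ would have to be a new projective-injective indecomposable summand not among the ones already used — but there are only $d$ of them. Here I need to be careful and use minimality together with Lemma \ref{lem:minimal}: in a minimal injective resolution the socle of each cokernel injects as an essential submodule into the next term, so distinct consecutive essential socles rule out reusing the same indecomposable injective envelope at the very next stage, and a counting/pigeonhole argument over the $d$ available projective-injectives then caps the projective-injective stretch at $d$. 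That gives $dom.dim A \le d$.

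For the inequality $d \le n-1$, I would argue that a projective-injective indecomposable $A$-module $P(j)$ that is also injective is in particular both $P(j)$ and $I(j')$ for possibly different vertices, but the assignment $j \mapsto j$ already shows the projective-injectives are indexed by a subset of $Q_0$, so $d \le n = |Q_0|$. To improve $n$ to $n-1$, I use the hypothesis $Q\neq \A_1$, i.e. $Q$ has at least two vertices and (being connected, acyclic, with $\ge 2$ vertices) has a source vertex $a$ and a sink vertex $c$ with $a\neq c$. Then $P(a)$ — the projective at a source — cannot be injective unless $a$ is also a sink, and in a connected quiver on $\ge 2$ vertices a source is not a sink; symmetrically $I(c)$ is not projective. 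Thus at least one indecomposable projective fails to be injective, so the projective-injectives are indexed by a proper subset of $Q_0$, giving $d \le n-1$. Chaining the two bounds yields $dom.dim A \le d \le n-1$.

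The main obstacle I anticipate is the first inequality $dom.dim A\le d$: making rigorous the claim that a minimal injective resolution cannot keep producing projective-injective terms beyond the $d$-th step. The delicate point is that the same indecomposable projective-injective could in principle appear in several of the $I_j$, so a naive pigeonhole on occurrences is not immediate; the fix is to exploit minimality (no injective summand of $I_j$ already appears as a summand of the cokernel's image, via Lemma \ref{lem:minimal}-type reasoning) so that the essential socle of the running cokernel changes at each step, and to combine this with finiteness of global dimension to force the cokernel to vanish once the supply of $d$ projective-injectives is exhausted. The remaining steps (directedness $\Rightarrow$ finite injective dimension, existence of a non-injective projective at a source when $Q\neq\A_1$) are routine.
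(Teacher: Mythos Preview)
Your proposal has two genuine gaps.

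\medskip

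\textbf{The inequality $dom.dim A\le d$.} You correctly spot that the pigeonhole needs a non-repetition statement: a given indecomposable projective-injective must not recur among $I_1,\dots,I_k$. But your proposed fix --- ``the essential socle of the running cokernel changes at each step'' --- only excludes repetition at \emph{consecutive} positions. It does nothing to prevent, say, $I(a)$ appearing in $I_1$ and again in $I_3$. (Think of a self-injective Nakayama algebra: socles change at every step, yet the same injective recurs periodically.) Finite global dimension alone does not rescue this either, since it only bounds the total length of the resolution, not the length of its projective-injective initial segment.

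The paper's argument uses directedness in a sharper way than you do. Because $A$ is directed, any nonzero non-invertible morphism $I(a)\to I(b)$ between indecomposable injectives forces a strict inequality between $a$ and $b$ in the order on $Q_0$. In a minimal resolution the component maps $I_j\to I_{j+1}$ are radical maps, so the vertex labels of the summands of $I_{j+1}$ lie strictly beyond those of $I_j$. Hence the indecomposable projective-injective summands occurring in $I_1,\dots,I_k$ are pairwise distinct, giving $k\le \sum_j n_j\le d$ at once. You should replace your pigeonhole sketch with this monotonicity argument.

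\medskip

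\textbf{The inequality $d\le n-1$.} Your claim that ``$P(a)$, the projective at a source, cannot be injective unless $a$ is also a sink'' is false: in the path algebra of $\A_n$ the source vertex $1$ satisfies $P(1)\cong I(n)$, yet $1$ is not a sink. The argument works if you look at a \emph{sink} $c$ instead: then $P(c)=S(c)$ is simple, and $S(c)$ injective would force $c$ to have no predecessors, contradicting connectedness with $|Q_0|\ge 2$. Equivalently, as the paper does, simply note that a bound quiver algebra of a connected acyclic quiver on at least two vertices is never self-injective, so at least one indecomposable projective fails to be injective and $d\le n-1$.
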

\begin{proof}
 Because $A$ is not self-injective, there are 
at most $n-1$  projective-injective $A$-modules, and therefore $d\leq n-1$.
Since $Q$ has no oriented cycles, $A$  is a directed algebra. Let  
 \[I^{\bullet}: 0\rightarrow _AA\rightarrow I_1\rightarrow I_2\rightarrow\cdots \rightarrow I_k\rightarrow I_{k+1}\rightarrow I_{k+2}\rightarrow\]
be a minimal injective resolution
of $A$ such that $I_j=\oplus I(a_{ij})$ is non-zero projective for $1\leq j\leq k$, where $I(a_{ij}\in Q_0)$ is indecomposable
 projective-injective $A$-module, $1\leq i\leq n_j$ and $n_j$ is the number of direct summands of $I_j$.
Now for $j=1,2,\cdots,k-1$, each  matrix $\phi_j:I_j\longrightarrow I_{j+1}$ has as entries the morphisms
$\phi_{ij}:I(a_{ij})\longrightarrow I(b_{i'j+1})$ where  $a_{ij},b_{i'j+1}\in Q_0$. 
Since $A$ is a directed algebra, so $\phi_{ij}$ is non-zero, non-invertible, and for each  $j=1,2,\cdots,k-1$,
 $a_{ij}\lneq b_{i'j+1}$ . This implies that the injective resolution $I^{\bullet}$ is finite, as $|Q_0|=n$ is fixed.
Hence there exists some $j=k+2$ (say) such that $I_j=0$ and $k+2\leq n+1$.
Since $I^{\bullet}$ is minimal, it follows from Lemma \ref{lem:minimal} that $I_{k+1}$ is not projective. 
Consequently,
\[dom.dimA=k=\sum_{1\leq j\leq k}j\leq\sum_{1\leq j\leq k}n_j=d\leq n-1\]
or $dom.dimA\leq d\leq n-1$. This completes the proof.

\end{proof}

The following Lemma is used frequently to settle many results.
\begin{lem}
\label{lem:new maximal path}
Let $B$ be a bound quiver algebra of a tree $Q$ and $a,b\in Q_0$  two  distinct vertices.
Then $P(a)\cong I(b)$ if and only if there exists a path $x$ from $a$ to $b$ such that $x$ is maximal, and both $P(a)$ and $I(b)$
are uniserial. In particular, if $Q=\A_n$,  $P(1)$ and $I(n)$ are projective-injective.
\end{lem}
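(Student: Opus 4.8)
The plan is to reduce everything to \emph{thin} modules — representations with every dimension-vector entry at most $1$ — and to the fact that on a tree there is at most one path between any two vertices. Since $Q$ is a tree, $\dim_K P(a)_i=\dim_K e_iBe_a\le 1$ and $\dim_K I(b)_i=\dim_K e_bBe_i\le 1$ for all $i$, so every indecomposable projective and every indecomposable injective over $B$ is a thin representation; moreover the defining ideal of $B$ is automatically monomial, as $Q$ has no two distinct parallel paths. The organizing observation I would establish first is: an \emph{indecomposable} thin representation $M$ whose support is a subtree $Q'\subseteq Q$ is the interval (``thin'') representation of $Q'$, i.e.\ all structure maps along the arrows of $Q'$ are isomorphisms $K\to K$. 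Indeed, if an arrow of $Q'$ carried the zero map, deleting it would disconnect the tree $Q$, hence disconnect the support, contradicting indecomposability. In particular such an $M$ is determined up to isomorphism by the subquiver $Q'$, its top is $\bigoplus S(i)$ over the sources of $Q'$, its socle is $\bigoplus S(i)$ over the sinks of $Q'$, and $M$ is uniserial exactly when $Q'$ is a linearly oriented path (then the subrepresentations are the terminal subintervals, forming a chain).

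For $(\Rightarrow)$, suppose $M:=P(a)\cong I(b)$ with $a\neq b$. Then $M$ is thin; let $Q'$ be its support and use the first paragraph to write $M$ as the interval representation of $Q'$. Since $M\cong P(a)$ has simple top $S(a)$, the vertex $a$ is the \emph{unique} source of $Q'$; since $M\cong I(b)$ has simple socle $S(b)$, the vertex $b$ is the \emph{unique} sink of $Q'$. I would then prove the elementary combinatorial lemma that an oriented tree with a unique source and a unique sink is a linearly oriented path from the source to the sink: every leaf of the underlying tree is a source or a sink, so the only leaves are $a$ and $b$; a tree with exactly two leaves is a path; and on a path, each internal vertex being neither source nor sink forces the orientation to be monotone, hence to run from $a$ to $b$. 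Thus $Q'$ is the unique path $x$ in $Q$ from $a$ to $b$, $M$ is the interval representation on $x$, which is uniserial, so both $P(a)$ and $I(b)$ are uniserial. Finally $x$ is maximal: an arrow $\gamma$ with $x\gamma\neq0$ in $B$ (resp.\ $\gamma'x\neq0$) would force a vertex outside $x$ into the support of $P(a)$ (resp.\ $I(b)=M$), contradicting that the support is exactly the vertex set of $x$ — here one uses once more that $Q$ is a tree, so such an extension cannot revisit a vertex of $x$.

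For $(\Leftarrow)$, assume $x\colon a\to b$ is maximal and $P(a),I(b)$ are uniserial. Being uniserial and thin, $P(a)$ has support a directed path $p$ starting at $a$ (its top is $S(a)$, its socle simple), and every initial subpath of $p$ is nonzero in $B$. Since $x$ is a nonzero path from $a$ and $Q$ is a tree, matching first arrows inductively shows $x$ is an initial subpath of $p$; maximality of $x$ prevents $p$ from properly extending $x$, so $p=x$ and $P(a)$ is the interval representation on $x$. The dual argument (simple socle $S(b)$, maximality of $x$ on the left) gives that $I(b)$ is also the interval representation on $x$, hence $P(a)\cong I(b)$ by the uniqueness in the first paragraph. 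The ``in particular'' clause follows at once: for $Q=\A_n$ with $n\ge2$ and $B=KQ$, the path $1\to2\to\cdots\to n$ is maximal and $P(1)$, $I(n)$ are the (uniserial) interval representations on all of $Q$, so $P(1)\cong I(n)$ is both projective and injective; for $n=1$ this is trivial since $P(1)=I(1)=S(1)$.

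The steps I expect to require the most care are the structural claim of the first paragraph — that an indecomposable thin representation of a subtree must be the interval module — and the ``unique source and unique sink implies linear path'' lemma invoked in $(\Rightarrow)$; once these are in hand, the identification of $P(a)$ and $I(b)$ with a common interval module and the maximality bookkeeping are routine.
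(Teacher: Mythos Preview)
Your argument is correct and takes a genuinely different route from the paper's. The paper handles $(\Leftarrow)$ by universal properties: from $\mathrm{soc}\,P(a)=S(b)$ it gets an embedding $P(a)\hookrightarrow I(b)$ via the injective envelope, and from $\mathrm{top}\,I(b)=S(a)$ a surjection $P(a)\twoheadrightarrow I(b)$ via the projective cover, then concludes by comparing dimensions. For $(\Rightarrow)$ it simply reads off simple top and simple socle and asserts, rather tersely, that over a tree this forces uniseriality. You instead build a small structural layer---indecomposable projectives and injectives over a tree algebra are thin interval modules, determined by their support---and then identify both $P(a)$ and $I(b)$ with the \emph{same} interval module supported on $x$. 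Your approach makes the uniseriality step in $(\Rightarrow)$ into an explicit combinatorial lemma (unique source and unique sink in an oriented tree forces a linear path), which is more transparent than the paper's one-line claim; the paper's approach is shorter and stays with standard homological tools without introducing the interval-module language.

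One small gap: in the ``in particular'' clause you treat only $B=KQ$, whereas the statement (and its later uses, e.g.\ in Lemma~\ref{lem:inj-proj env}) is for an arbitrary bound quiver algebra of $\A_n$. The repair is immediate with your own framework: for any such $B$ all projectives and injectives are uniserial since $\A_n$ is linear; if $\mathrm{soc}\,P(1)=S(j)$ then the path from $1$ to $j$ is maximal (it cannot be extended to the left because $1$ is a source, nor to the right by choice of $j$), so $P(1)\cong I(j)$ by the main statement, and dually $I(n)\cong P(i)$ for some $i$.
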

\begin{proof}
 Suppose $P(a)\cong I(b)$. Then obviously $topP(a)=S(a)=topI(b)$ and $socI(b)=S(b)=socP(a)$. This shows existence of the path 
$x$ from $a$ to $b$. Now $x$ is maximal, because otherwise either $socP(a)\neq S(b)=socI(b)$ or $topI(b)\neq S(a)=topP(a)$,
which is contrary to the supposition.
Since $Q$ is a tree, and both $P(a)$ and $I(b)$
have simple socle and simple top, therefore both $P(a)$ and $I(b)$ are uniserial.

Conversely, assume that there exists a maximal path $x$ from $a$ to $b$, and  both $P(a)$ and $I(b)$ are uniserial. Because  $x$ is a maximal path from
$a$ to $b$, and  $P(a)$ is uniserial, therefore $socP(a)=S(b)$. This shows that $EP(a)=I(b)$. 
Thus  $P(a)\hookrightarrow I(b)$. Now since $I(b)$ is also uniserial, the maximality of the path $x$ gives
$topI(b)=S(a)$. This implies that the projective cover of $I(b)$ is $P(a)$, and hence $P(a)\twoheadrightarrow I(b)$.
Consequently,  $P(a)\cong I(b)$.

Obviously, all the projectives and injectives are uniserial if $Q=\A_n$.
 Let  $socP(1)=S(j\neq n)$ and $topI(n)=S(i\neq1)$ where
$i\neq j\in Q_0$. Since $1\in Q_0$ being a source has no predecessors, the path from 1 to $j$ is maximal.
Hence $P(1)\cong I(j)$. Similarly, the path from $i$ to $n$ is maximal because $n$ being a sink has no successors.
Thus $I(n)\cong P(i)$.

\end{proof}

\section{Hereditary algebras}
\label{sec: hereditary}
Throughout the section, except in Proposition \ref{prp:An}, it is assumed that 
 $Q\neq \A_n$ and  $A=KQ$ is a path algebra of $Q$, where $\A_n$ is a linearly oriented quiver with $n$ vertices.

\begin{prp}
\label{prp:An}
The path algebra of  $\A_n$ has dominant dimension equal to one.
         
\end{prp}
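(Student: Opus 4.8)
The plan is to compute a minimal injective resolution of the left regular module ${}_AA$ for $A = K\A_n$ and show it has dominant dimension exactly one. Since $A = K\A_n$ is hereditary and not self-injective (for $n\geq 1$; note that $\A_1$ is trivially handled, having $\mathrm{dom.dim} = \infty$ as a semisimple algebra, so presumably the statement implicitly assumes $n\geq 2$, or one checks $\A_1$ separately), we know by Theorem \ref{thm:lub} that $\mathrm{dom.dim}\,A \leq n-1$, so the dimension is finite. The real work is to pin down the value as $1$, which means showing (a) the injective envelope of ${}_AA$ is projective-injective, and (b) the next term in the minimal injective resolution is not projective.

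First I would decompose ${}_AA = \bigoplus_{i=1}^n P(i)$ into its indecomposable projective summands, one for each vertex of $\A_n = (1\to 2\to\cdots\to n)$. For each $i$, the projective $P(i)$ is uniserial with top $S(i)$ and socle $S(n)$, so its injective envelope is $I(n)$; in particular $P(i)\hookrightarrow I(n)$ for every $i$, and $I(n)\cong P(1)$ is projective-injective by Lemma \ref{lem:new maximal path} (the path from $1$ to $n$ is maximal). Hence $I_1 = EP(i)$ is projective-injective for every summand, so $\mathrm{dom.dim}\,{}_AA \geq 1$. Then I would use the additivity formula $\mathrm{dom.dim}(M\oplus N) = \min(\mathrm{dom.dim}\,M,\mathrm{dom.dim}\,N)$ to reduce the computation of the upper bound to a single summand — the worst one. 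Since $P(n) = S(n)$ is already injective, it contributes $\infty$ and can be discarded; the relevant summands are $P(i)$ for $i < n$, which are genuinely non-injective.

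The key computation is then: for $i<n$, what is the cokernel of $P(i)\hookrightarrow I(n)$? Since $P(i)$ has composition factors $S(i), S(i+1),\dots, S(n)$ and $I(n)\cong P(1)$ has composition factors $S(1),\dots,S(n)$, the cokernel $C_i$ has composition factors $S(1),\dots,S(i-1)$, and being a quotient of the uniserial module $I(n)$ by a submodule, $C_i$ is itself uniserial with top $S(1)$ and socle $S(i-1)$; that is, $C_i\cong P(1)/P(i) \cong I(i-1)$ (for $i\geq 2$; for $i=1$ the map is an isomorphism and $P(1)$ is already projective-injective). So the minimal injective resolution of $P(i)$ reads $0\to P(i)\to I(n)\to I(i-1)\to 0$. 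It remains to check that $I(i-1)$ is not projective for $2\leq i\leq n$ — equivalently, that no $I(j)$ with $j< n-1$... wait, with $j = i-1 \leq n-1$ but we need $j$ such that $I(j)$ is non-projective. By Lemma \ref{lem:new maximal path}, $I(j)$ is projective-injective iff it equals some $P(a)$ with a maximal path from $a$ to $j$; since the only sink with a maximal path ending there is $n$ itself, $I(j)$ is projective only when $j = n$. Thus $I(i-1)$ is non-projective for all $i\leq n$, and by Lemma \ref{lem:minimal} the resolution cannot be continued by projectives, giving $\mathrm{dom.dim}\,P(i) = 1$.

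I expect the main obstacle to be the bookkeeping of edge cases and the clean identification of the cokernel $C_i$ with the injective $I(i-1)$: one must be careful that for $i=1$ the summand $P(1)$ is projective-injective (contributing $\infty$), for $i=n$ the summand $P(n)=S(n)$ is injective (contributing $\infty$), and only for $2\leq i\leq n-1$ does one get a genuine length-one resolution with non-projective second term — but actually $i=n$ needs rechecking since $P(n)$ injective means it is discarded, and the surviving summands $P(2),\dots,P(n-1)$ (and $P(1)$ which is proj-inj, discarded) all give dimension exactly $1$ except we must double check there is at least one such summand, which forces $n\geq 3$; for $n=2$ one has ${}_AA = P(1)\oplus P(2)$ with $P(1) = I(2)$ projective-injective and $P(2) = S(2) = I(1)$ injective, so $\mathrm{dom.dim}\,A = \infty$... which contradicts the claim, so in fact the intended range must be $n\geq 3$, or the conventions in the paper treat $\A_2$ as $\A_n$ with the formula giving $1$ by some other reading. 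I would resolve this by stating the argument for $n\geq 3$ where the non-injective, non-projective-injective summand $P(2)$ forces the answer to be $1$, and handling $n\leq 2$ by direct inspection.
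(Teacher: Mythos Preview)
Your core argument is correct and essentially matches the paper's proof: both identify $I(n)\cong P(1)$ as the unique indecomposable projective-injective, observe that every $P(i)$ has socle $S(n)$ and hence embeds in $I(n)$, and obtain the resolution $0\to {}_AA\to I(n)^n\to I(1)\oplus\cdots\oplus I(n-1)\to 0$ with none of $I(1),\dots,I(n-1)$ projective.

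Your edge-case analysis, however, contains two concrete errors that lead you to an unnecessary (and false) worry. First, $P(n)=S(n)$ is \emph{not} injective for $n\geq 2$: the injective $I(n)$ has $K$-dimension $n$, so $S(n)=I(n)$ only when $n=1$. Your own cokernel formula handles $i=n$ perfectly well: $0\to P(n)\to I(n)\to I(n-1)\to 0$ with $I(n-1)$ non-projective, giving $\mathrm{dom.dim}\,P(n)=1$. Second, and as a consequence, the case $n=2$ does not fail. You wrote ``$P(2)=S(2)=I(1)$'', but in fact $I(1)=S(1)$, not $S(2)$; thus $P(2)=S(2)$ is non-injective, its envelope is $I(2)$, and the cokernel $I(1)=S(1)$ is non-projective, so $\mathrm{dom.dim}\,K\A_2=1$ as the proposition asserts. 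Only $n=1$ is genuinely exceptional (semisimple, dominant dimension $\infty$), and the paper's standing convention $n\in\N$ together with the context implicitly excludes it.
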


\begin{proof}
Let $A$ be the path algebra of $\A_n$.
We first show that $I(n)$ is the only injective which, up to isomorphism, is projective. Since every
projective  $P(i)$ has the simple socle $ S(n)$, therefore $EP(i)=I(n)$. As $P(1)$ and  $I(n)$ have the same dimension, $P(1)\cong I(n)$.
Next we show that for all $j=1,2,...,n-1$, $I(j)$ is not projective. Suppose, on the contrary, that $I(j)$ is projective. That is, $I(j)\cong P(k)$
for a $k$ such that $dimI(j)=dimP(k)$. Now $P(k)\cong I(j)$ implies  $EP(k)=I(j)$ where $j\neq n$, a contradiction to the fact that
$EP(k)=I(n)$.
Thus $I(j)$ is not projective for $j=1,2,...,n-1$.
Now the minimal injective resolution  of $A$ becomes
                  \[0\rightarrow A\rightarrow I(n)^n\rightarrow I(1)\oplus I(2)\oplus \cdots \oplus I(n-1)\rightarrow 0\]
where $I(n)^n$ is the direct sum of $n$ copies of $I(n)$. Hence $dom.dimA=1$.
\end{proof}

\begin{lem}
\label{lem:longest path}
        Every longest path in $Q$ contains at least one branching vertex.
\end{lem}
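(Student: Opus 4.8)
The plan is to argue by contraposition: I will show that if some longest path in $Q$ avoids every branching vertex, then $Q$ is forced to be the linearly oriented $\A_n$, contradicting the standing hypothesis $Q\neq\A_n$. First note this makes the statement non-vacuous, since $Q\neq\A_n$ (in particular $Q\neq\A_1$) together with connectedness of $Q$ guarantees at least one arrow, hence a longest path of length $\geq 1$.

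So suppose $P\colon a_0\xrightarrow{\alpha_1}a_1\xrightarrow{\alpha_2}\cdots\xrightarrow{\alpha_k}a_k$ is a longest path in $Q$ and that none of $a_0,\dots,a_k$ is a branching vertex; by definition this means each $a_i$ is the target of at most one arrow and the source of at most one arrow in all of $Q$. I would first observe that $a_0$ is the target of no arrow and $a_k$ the source of no arrow: an arrow $b\to a_0$ with $b\notin\{a_0,\dots,a_k\}$ would prolong $P$ to a strictly longer path, while $b=a_i$ for some $i$ would produce the oriented cycle $a_0\to\cdots\to a_i\to a_0$ (or a loop if $i=0$), both impossible; the argument for $a_k$ is symmetric. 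Next, for $1\leq i\leq k$ the arrow $\alpha_i$ is an incoming arrow at $a_i$, so non-branching forces it to be the only one; likewise for $0\leq i\leq k-1$ the arrow $\alpha_{i+1}$ is the unique outgoing arrow at $a_i$.

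Putting this together, every arrow of $Q$ incident with a vertex of $V:=\{a_0,\dots,a_k\}$ is one of $\alpha_1,\dots,\alpha_k$, and in particular has both of its endpoints in $V$. Hence no arrow joins $V$ to its complement $Q_0\setminus V$, and since $Q$ is connected this forces $V=Q_0$ and $Q_1=\{\alpha_1,\dots,\alpha_k\}$; that is, $Q$ is the linearly oriented $\A_{k+1}$, contradicting $Q\neq\A_n$. I expect the only delicate point to be the extension step in the middle paragraph, where acyclicity of $Q$ — not merely maximality of $P$ — is what rules out the extending arrow pointing back into $P$; everything else is a routine degree count straight from the definition of a branching vertex.
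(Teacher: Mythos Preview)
Your proof is correct and uses essentially the same ingredients as the paper's: maximality of the path to rule out extensions at the endpoints, acyclicity to prevent such extensions from looping back into the path, and connectedness together with $Q\neq\A_n$ to force an extra arrow somewhere. The paper argues directly, splitting into the cases $Q_0\setminus Q_0^x\neq\emptyset$ and $Q_0^x=Q_0$ and exhibiting a branching vertex in each; your contrapositive framing absorbs both cases into a single degree count, which is a mild streamlining but not a substantively different argument.
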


\begin{proof}
    Let $x$ be a longest path in $Q$ and  $ Q_0^x=\{a_1,\ldots,a_n\}$ such that $ s(x)=a_1$ and
$t(x)=a_n $. Since $x$ is longest, so there does not exist a path, say $w$ such that $s(w)=a_n$ or $ t(w)=a_1$. 
Now there may or may not exist a vertex $b\in Q_0\smallsetminus Q_0^x$. First assume that there exists
 a $b\in Q_0\smallsetminus Q_0^x$.
\[\begin{xy}
\xymatrix {
&&&&b\ar[ld]\\
x:a_1\ar[r]&a_2\ar[r]&\dots \ar[r]&a_i\ar[ru]\ar[r]&\dots\ar[r]&a_{n-1} \ar[r]&a_n
}
\end{xy}\]
Since $Q$ is connected, there exist: an unoriented path,
say $y$  between $b$ and some $a_i\in Q_0^x $, and an arrow $\alpha \in Q_1^y$ such that 
  \begin{equation}
\label{eq: branching1}
    \begin{array}{cl}
s(\alpha)=a_i=s(x)&\text{ if } i=1\\
t(\alpha)=a_i=t(x)&\text{ if } i=n
 \end{array}
  \end{equation}
and if $1<i<n$ then 
\begin{equation}
\label{eq: branching2}
  \begin{array}{cl}
s(\alpha)=a_i=s(\beta)&\text{ for some } \beta \in Q_1^x\\
\text{ or}\\
t(\alpha)=a_i=t(\gamma)&\text{ for some } \gamma \in Q_1^x
 \end{array}
  \end{equation}
Now by definition, $a_i$ is a branching vertex. 

Next suppose there does not exist any $b\in Q_0\smallsetminus Q_0^x$. Then $Q_0^x=Q_0$.
\[ x:a_1\longrightarrow a_2\longrightarrow\cdots \longrightarrow a_i\longrightarrow\cdots \longrightarrow a_{n-1}\longrightarrow a_n\]
 Since $Q\neq \A_n$, we
 have multiple arrows in Q. Hence there must exist at least one arrow $\alpha \in Q_1\smallsetminus Q_1^x$ and $a_i\in Q_0^x=Q_0$ such that
the equations \eqref{eq: branching1} and \eqref{eq: branching2} hold.  Hence  $a_i$ is a branching vertex. 
 For such an arrow $\alpha:a_i\longrightarrow a_j$, necessarily $j>i$ because otherwise
 we would have a cycle, contradicting the assumption that $Q$ is acyclic. Also note that if there does not exist an
  $\alpha \in Q_1\smallsetminus Q_1^x$, then $Q_1^x=Q_1$. This, together with $Q_0^x=Q_0$, implies that  $Q=\A_n$, again a contradiction.
\end{proof}

\begin{lem}
\label{lem:2 simple summands}
 Let $x$ from $a$ to $b$ be a longest path in $Q$. Then $ socP(a)$ or   $ topI(b)$ has at
        least two simple summands.
\end{lem}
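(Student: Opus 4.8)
The plan is to argue by contradiction: suppose $x\colon a\to b$ is a longest path in $Q$ but both $\operatorname{soc}P(a)$ and $\operatorname{top}I(b)$ are simple. First I would invoke Lemma~\ref{lem:longest path}, which guarantees that $x$ contains at least one branching vertex, say $c\in Q_0^x$. Since $A=KQ$ is hereditary, for any vertex $j$ the module $P(j)$ has top $S(j)$ and its radical is the direct sum of the projectives at the immediate successors of $j$; similarly $I(j)$ has socle $S(j)$ and $I(j)/S(j)$ is the direct sum of the injectives at the immediate predecessors of $j$. The key structural fact I will use is that $\operatorname{soc}P(a)=\bigoplus_{c}S(c)$ where $c$ ranges over the sinks reachable from $a$, and dually $\operatorname{top}I(b)=\bigoplus_{a'}S(a')$ where $a'$ ranges over the sources that reach $b$.

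Next I would split into the two cases coming from the definition of a branching vertex. If the branching vertex $c$ on $x$ satisfies $s(\alpha)=c=s(\beta)$ for distinct arrows $\alpha,\beta\in Q_1$, then exactly one of $\alpha,\beta$ lies on $x$; following the other one leads (since $Q$ is finite and acyclic) to a sink $c'$ not on the $x$-segment past $c$, and following $x$ itself past $c$ leads to the sink $b$. These two sinks are distinct because $Q$ is acyclic and the two arrows out of $c$ are distinct, so $b$ and $c'$ lie on genuinely different paths out of $a$. Hence $\operatorname{soc}P(a)$ contains both $S(b)$ and $S(c')$, contradicting simplicity of $\operatorname{soc}P(a)$. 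The dual case $t(\alpha)=c=t(\beta)$ gives, by the same reasoning run backwards, two distinct sources reaching $b$ — namely $a$ and some source $a'$ on the incoming branch at $c$ — so $\operatorname{top}I(b)$ has at least two simple summands, again a contradiction. Since every longest path has a branching vertex and each type of branching forces one of the two desired conclusions, this completes the argument.

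The main obstacle I anticipate is the bookkeeping in showing the two sinks (resp.\ sources) are actually distinct and that the corresponding simples really appear as distinct summands of the socle (resp.\ top): one must make sure that, at the branching vertex, the off-$x$ arrow starts a path that does not merely re-merge with $x$, which is where connectedness plus acyclicity of $Q$ (hence the tree-like local behaviour of sinks below $c$) is essential. A clean way to handle this is to observe that $b$ is a sink (it is the endpoint of a longest path) and the off-$x$ branch out of $c$ terminates at some sink $c'$; if $c'=b$ we would have two distinct paths from $c$ to $b$, impossible since $Q$ is acyclic — wait, acyclicity alone does not forbid two paths, so here I would instead note that in the relevant case we can choose the longest path through the off-branch, which would contradict maximality of the length of $x$ unless the branch is short, and then a direct degree/socle count at $a$ finishes it. I will organize the write-up so that this distinctness point is isolated as the one place requiring care.
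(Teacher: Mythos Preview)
Your overall strategy matches the paper's: invoke Lemma~\ref{lem:longest path} to locate a branching vertex on $x$, then split according to whether the branching is outgoing or incoming at that vertex. The gap is exactly where you flag it yourself. Your description of $\operatorname{soc}P(a)$ as $\bigoplus_c S(c)$ over sinks $c$ reachable from $a$ is not correct for a general acyclic quiver: the socle of $P(a)$ is spanned by the \emph{maximal paths} starting at $a$, so $S(c)$ appears with multiplicity equal to the number of distinct paths from $a$ to the sink $c$. Consequently, when the off-$x$ branch at the outgoing branching vertex happens to terminate at the same sink $b$, you do not get a contradiction to distinctness --- and, as you correctly realise mid-paragraph, acyclicity alone does not forbid two paths from $c$ to $b$. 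Your fallback (``choose the longest path through the off-branch \ldots\ then a direct degree/socle count at $a$ finishes it'') is too vague to close the argument as written.

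The paper sidesteps the distinctness issue entirely. It simply notes that the off-branch yields a maximal path $x_1$ from $a$ to some sink $b_1$, so $S(b_1)$ is a summand of $\operatorname{soc}P(a)$; and since $x$ itself is maximal from $a$ to $b$, $S(b)$ is also a summand. If $b_1\neq b$ we are done; if $b_1=b$, then $x$ and $x_1$ are two \emph{distinct} maximal paths from $a$ to $b$, so $S(b)$ occurs with multiplicity at least two in $\operatorname{soc}P(a)$, and the socle is still non-simple. The dual remark handles the incoming-branch case for $\operatorname{top}I(b)$. The fix to your write-up is precisely this one observation about multiplicity: once you correct the description of the socle, no separate argument for distinctness of the two sinks (or sources) is needed at all.
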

\begin{proof}
        Since  $x$ is a longest path in $Q$, it follows from Lemma \ref{lem:longest path} that $x$ contains at least one
        branching vertex. Thus we can assume that $c$ or $d$ in $ Q_0^x$ are the branching vertices. 
\[\begin{xy}
\xymatrix {
&&&b_1\\
x:a\ar[r]&\ar[r]&c\ar[ru]\ar[r]&\dots\ar[r]&d\ar[r]&\ar[r]&b\\
&&&a_1\ar[ru]
}
\end{xy}\]
 Assume that $x_1$ and $x_2$ be two paths with $ s(x_1)= a=s(x)$,  $t(x_1)= b_1$  and
$s(x_2)= a_1$, $t(x_2)= b=t(x)$ such that $ \ell(x_i)\leq \ell(x)$ for $i=1,2$, where $b_1$ is the sink and $a_1$ is the source,
and $\ell(x)$ is the length of the path $x$. Hence $x_1$, $x_2$ are not the subpaths.

        Now since $x_1$ is not a subpath and $b_1$ is a sink, so $x_1$ is maximal and hence annihilated by the radical $rad(A)$ of the path
        algebra $A=KQ$. \emph{i.e.} $rad(A).x_1=0$. This implies that $ S(b_1)$ is a summand of $ soc P(a)$. Also $ S(b)$ is a summand of $ soc P(a)$.
        Note that if $b_1=b$ then we have two copies of the simple $A$-module $S(b)$ in the socle. Hence $ soc P(a)$ has at least two simple
        summands.

        Similarly $x_2$ is not a subpath and hence is maximal, so that $<x_2^*>\subset top(I(b))$. Hence the simple $A$-modules $S(a)$ and
        $S(a_1)$ are summands of $topI(b)$. If $a_1=a$ then we have two copies of the simple $A$-module $S(a)$ in the $ topI(b)$. Thus
        $ topI(b)$ has at least two simple summands. This proves the Lemma.
\end{proof}
\begin{lem}
\label{lem:non-projective env}
        Let $x$ from $a$ to $b$ be a longest path in $Q$. Then $I(b)$ is a non-projective summand of $EP(a)$.
\end{lem}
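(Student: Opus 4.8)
The plan is to combine Lemma~\ref{lem:2 simple summands} and Lemma~\ref{lem:new maximal path} to pin down the injective envelope of $P(a)$. First I would recall that since $Q$ is a tree, every indecomposable projective $P(a)$ has a socle that is a direct sum of simples $S(c)$ over the sinks $c$ reachable from $a$ by a maximal path; equivalently, $EP(a)=\bigoplus I(c)$ over those sinks. Among these sinks, $b=t(x)$ is one of them, since $x$ itself is a maximal path (it is a longest path, hence not a subpath of anything), so $I(b)$ is a direct summand of $EP(a)$. This establishes that $I(b)$ is a summand of $EP(a)$; the remaining work is to show $I(b)$ is \emph{not} projective.

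The key step is the non-projectivity of $I(b)$. By Lemma~\ref{lem:new maximal path}, $I(b)$ is projective-injective (i.e.\ isomorphic to some $P(a')$) if and only if there is a maximal path $x'$ from some vertex $a'$ to $b$ with both $P(a')$ and $I(b)$ uniserial. So it suffices to rule out $I(b)$ being uniserial, or more directly, to show $topI(b)$ (equivalently the projective cover structure) forces $I(b)\not\cong P(a')$ for any $a'$. Here is where I would invoke Lemma~\ref{lem:2 simple summands}: it tells us that $socP(a)$ or $topI(b)$ has at least two simple summands. If $topI(b)$ has at least two simple summands, then $I(b)$ is not uniserial (a uniserial module has simple top), hence by Lemma~\ref{lem:new maximal path} it cannot be isomorphic to any projective $P(a')$, so $I(b)$ is non-projective and we are done. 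The remaining case is when $socP(a)$ has at least two simple summands. In that case $P(a)$ is not uniserial, so $EP(a)=\bigoplus I(c)$ is a direct sum of at least two indecomposable injectives; I would argue that the summand $I(b)$ nevertheless cannot be projective because its occurrence as a summand of $EP(a)$ together with the branching structure prevents the needed maximal path and uniseriality — concretely, if $I(b)$ were projective-injective $P(a')$, then by uniseriality of $P(a')=I(b)$ the path from $a'$ to $b$ would be the unique maximal path into $b$, but the branching vertex on $x$ (guaranteed by Lemma~\ref{lem:longest path}) produces a second maximal path ending at $b$ or a second composition factor at the relevant spot, contradicting uniseriality.

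The main obstacle I anticipate is the second case above: cleanly deriving the contradiction when it is $socP(a)$ rather than $topI(b)$ that carries the extra simple summand. The branching vertex $c$ or $d$ on the longest path $x$ plays a dual role — it may produce an extra socle summand of $P(a)$ (a second sink $b_1\ne b$) rather than an extra top summand of $I(b)$, and then $I(b)$ itself could a priori still be uniserial. The resolution should be to observe that the branching vertex $d$ closest to $b$ (in the sense of the configuration in Lemma~\ref{lem:2 simple summands}'s proof, where $a_1\to\cdots\to d$ merges into $x$) gives a vertex with two immediate predecessors along distinct maximal paths into $b$, so $topI(b)$ in fact \emph{does} have two simple summands in that subcase too; alternatively one shows $I(b)$ has a non-simple socle forcing length considerations that clash with any projective $P(a')$ via the dimension/maximal-path criterion of Lemma~\ref{lem:new maximal path}. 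Either way, I would organize the proof as: (i) $I(b)\mid EP(a)$ via maximality of $x$; (ii) by Lemma~\ref{lem:longest path} $x$ meets a branching vertex, and a short case analysis on whether that vertex sits ``above'' $a$ or ``below'' $b$ shows $I(b)$ fails uniseriality or fails the maximal-path condition; (iii) conclude via Lemma~\ref{lem:new maximal path} that $I(b)\not\cong P(a')$ for any $a'$, i.e.\ $I(b)$ is non-projective.
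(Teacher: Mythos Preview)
Your overall strategy --- show $I(b)\mid EP(a)$ via maximality of $x$, then argue non-projectivity by contradiction --- matches the paper's. But two things go wrong in the execution. First, in Section~\ref{sec: hereditary} the quiver $Q$ is an arbitrary finite connected acyclic quiver, \emph{not} a tree; your opening ``since $Q$ is a tree'' and your appeal to Lemma~\ref{lem:new maximal path} (which is stated only for trees) are therefore illegitimate here. In particular, the uniseriality considerations you lean on are not available: for general acyclic $Q$ an isomorphism $I(b)\cong P(a')$ does not force either module to be uniserial.

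Second, the case split you worry about is unnecessary, and the ``hard'' case you flag is exactly where your argument has a genuine gap. The paper avoids all of this with a one-line observation: if $I(b)\cong P(a')$ for some $a'$, then $topI(b)=topP(a')=S(a')$ is automatically simple (indecomposable projectives always have simple top). But the proof of Lemma~\ref{lem:2 simple summands} already records that $S(a)$ is a summand of $topI(b)$, so $a'=a$. Hence $socP(a)=socI(b)=S(b)$ is simple as well (indecomposable injectives have simple socle). Now \emph{both} $socP(a)$ and $topI(b)$ are simple, contradicting Lemma~\ref{lem:2 simple summands} regardless of which disjunct held. No branching-vertex case analysis, no uniseriality, no Lemma~\ref{lem:new maximal path}. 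Your sketch of the second case (``the branching vertex $d$ closest to $b$ \ldots gives two immediate predecessors'') is not a proof: nothing guarantees such a $d$ exists when the branching on $x$ happens only on the outgoing side.
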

\begin{proof}
        Since $x$ is a longest path in $Q$, so the proof of Lemma \ref{lem:2 simple summands} gives 
\[  socP(a)=S(b)\oplus \cdots \oplus\quad\text{ and } \qquad  topI(b)=S(a)\oplus \cdots \oplus\]
Thus
 \[ EP(a) = E(socP(a))= E(S(b)\oplus \cdots \oplus)= ES(b)\oplus \cdots \oplus= I(b)\oplus \cdots \oplus     \]
Hence $I(b)$ is a direct summand of $EP(a)$.

        Next suppose, on the contrary, that $I(b)$ is projective. Then $I(b)\cong P(a)$ gives $EP(a)=I(b)$. 
This implies that $socP(a)$ and the $topI(b)$ are simple, a contradiction to the fact that
	$ soc P(a)$ or  $ topI(b)$ has at least two simple summands.
        Hence $I(b)$ is not projective.
\end{proof}
\begin{thm}
        The path algebra $A$ of $Q\neq \A_n$  has dominant dimension equal to zero.
\end{thm}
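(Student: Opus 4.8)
The plan is to reduce the statement to a single projective module attached to a longest path, and then quote Lemma \ref{lem:non-projective env}. Recall from the remarks following the definition of dominant dimension that $dom.dim A = dom.dim\,{}_AA$, that ${}_AA = \bigoplus_{i\in Q_0} P(i)$, and that $dom.dim(M\oplus N) = \min(dom.dim M, dom.dim N)$. Hence
\[
   dom.dim A \;=\; \min_{i\in Q_0}\, dom.dim P(i),
\]
so it suffices to produce a single vertex $a\in Q_0$ with $dom.dim P(a) = 0$, i.e.\ with $EP(a)$ not projective.

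To find such a vertex, I would pick a longest path $x$ in $Q$, say from $a$ to $b$. Such a path exists and has length at least one: $Q$ is finite, connected and acyclic, and since $Q\neq\A_n$ it contains a branching vertex (this is also the content of Lemma \ref{lem:longest path}), so in particular $Q_1\neq\emptyset$. Applying Lemma \ref{lem:non-projective env} to $x$ gives that $I(b)$ is a direct summand of $EP(a)$ which is \emph{not} projective. Therefore $EP(a)$ itself is not projective, so $dom.dim P(a) = 0$, and consequently $dom.dim A = 0$.

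There is essentially no obstacle left at this stage: the substantive work has already been carried out in Lemma \ref{lem:longest path} (a longest path meets a branching vertex), Lemma \ref{lem:2 simple summands} ($socP(a)$ or $topI(b)$ is not simple), and Lemma \ref{lem:non-projective env} ($I(b)\mid EP(a)$ and $I(b)$ is not projective). The only points needing a line of care are the bookkeeping that a longest path of positive length genuinely exists when $Q\neq\A_n$, and the passage from $dom.dim P(a)=0$ to $dom.dim A=0$ via $dom.dim(M\oplus N)=\min(dom.dim M,dom.dim N)$; both are immediate.
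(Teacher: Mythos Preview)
Your proof is correct and follows essentially the same route as the paper: pick a longest path from $a$ to $b$, invoke Lemma \ref{lem:non-projective env} to conclude that $I(b)$ is a non-projective summand of $EP(a)$, and deduce that $I_1$ is not projective. You spell out a bit more of the bookkeeping (the $\min$-formula and the existence of a nontrivial longest path), but the argument is the same.
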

\begin{proof}
It is enough to prove that $I_1=\oplus EP(i)$ in the minimal injective resolution
of $A$ contains a non-projective summand. 
Since $Q$ is finite, connected and acyclic, it contains a longest path, say, from $a$ to $b$.
 Then by Lemma \ref{lem:non-projective env}, $I(b)$ is a non-projective
 summand of $EP(a)$ and hence of $I_1$. Hence $I_1$ is not projective showing that $dom.dim A=0$.
\end{proof}
We summarize this chapter as
\begin{thm} Let $A=KQ$ be a path algebra of a finite, connected and acyclic quiver $Q$. Then
\label{thm:hereditary}
       $$
       dom.dimA =
    \begin{cases}
       1 &\text{ if } Q=\A_n\\
       0 &\text{ if } Q\ne\A_{n}.
       \end{cases}
    $$
\end{thm}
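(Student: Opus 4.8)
The plan is to assemble Theorem \ref{thm:hereditary} by direct combination of the two cases already established in this section. The theorem asserts a dichotomy: the dominant dimension of a path algebra $A = KQ$ equals $1$ precisely when $Q = \A_n$ (linearly oriented) and equals $0$ otherwise. Since these are the only two possibilities for a finite, connected, acyclic quiver, and each case has been settled independently above, the proof amounts to citing the relevant results and observing that they exhaust all cases.

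First I would handle the case $Q = \A_n$. Here Proposition \ref{prp:An} states directly that the path algebra of $\A_n$ has dominant dimension equal to one, so this case is immediate. (One should note that $\A_1$ is the trivial case of a single vertex with $A = K$, which is self-injective, so strictly the interesting content is $n \geq 2$; but $\A_1$ still has $dom.dim A \geq 1$ since its only projective is injective, and in fact infinite, so if one wants the clean statement $dom.dim A = 1$ one implicitly restricts to $n \geq 2$, consistent with how Proposition \ref{prp:An} is phrased.) Second, I would handle the case $Q \neq \A_n$: this is exactly the content of the (unnumbered) theorem stated immediately before Theorem \ref{thm:hereditary}, namely that the path algebra $A$ of $Q \neq \A_n$ has dominant dimension zero, which itself rests on Lemma \ref{lem:non-projective env} (the longest path from $a$ to $b$ produces a non-projective summand $I(b)$ of $EP(a)$, hence of the first term $I_1$ of the minimal injective resolution).

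Finally I would combine these: since any finite, connected, acyclic quiver $Q$ either is linearly oriented of type $\A_n$ or is not, the two cases are mutually exclusive and jointly exhaustive, so the piecewise formula follows. Concretely the argument reads: if $Q = \A_n$ then $dom.dim A = 1$ by Proposition \ref{prp:An}; if $Q \neq \A_n$ then $dom.dim A = 0$ by the preceding theorem; this establishes the displayed formula. There is essentially no obstacle here — all the real work was done in Lemmas \ref{lem:longest path}, \ref{lem:2 simple summands} and \ref{lem:non-projective env} and in Proposition \ref{prp:An}. The only point requiring a word of care is the boundary case $\A_1$ (and possibly whether "linearly oriented" is meant to exclude multiple arrows, which for a quiver with a unique path between vertices it automatically does), but this is a matter of convention rather than a genuine difficulty.

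\begin{proof}
This is the combination of the two preceding results. If $Q = \A_n$ is linearly oriented, then $dom.dim A = 1$ by Proposition \ref{prp:An}. If $Q \neq \A_n$, then $dom.dim A = 0$ by the preceding theorem. Since a finite, connected, acyclic quiver is either of the form $\A_n$ (linearly oriented) or not, these two cases are exhaustive and mutually exclusive, which yields the stated formula.
\end{proof}
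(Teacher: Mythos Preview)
Your proposal is correct and matches the paper's approach exactly: the paper presents Theorem \ref{thm:hereditary} explicitly as a summary (``We summarize this chapter as'') of Proposition \ref{prp:An} and the immediately preceding unnumbered theorem, without giving a separate proof. Your write-up simply makes this assembly explicit, and your parenthetical remark about the degenerate case $\A_1$ is a reasonable aside that the paper does not address.
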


In the following two sections we concentrate on the bound quiver algebras of finite trees.
Before we proceed further, we observe from the above section that a branching vertex has a central role in computing the
dominant dimension of a path algebra of a finite, connected and acyclic quiver. Since every tree, except $\A_n$, has at least one
branching vertex, it motivates us to consider first the bound quiver algebras of the branching free tree $\A_n$.

\section{Bound quiver algebras of $\mathbb{ A}_n$}
\label{sec:An}

We consider the quotient algebras of $\A_n$ for $n\geq3$.
Throughout this section, we assume that $Q=\A_n=Q_n$ and that $A=KQ/\mci$ is a bound quiver algebra of $Q$, where $\mci$ is an admissible of $KQ$ generated by 
a certain set of zero relations. We go through different sets of zero relations to investigate how 
$dom.dimA$ depends on the choice of zero relations. We find  lower and upper bound of $dom.dimA$ and show
by examples that these bounds are optimal.

  For convenience, we denote by  $dd(P(i),Q_j)$   the dominant dimension of the  projective module 
$P(i)$  when $Q=Q_j$ where $3\leq j<n$. If  $Q=Q_n$, we write $dd(P(i),Q_n)=ddP(i)$.
Given a set $R$ of zero relations on $Q$, we denote by $R_0^{s}$ and $R_0^{t}$, respectively, the set of sources and the set of targets of the relations
in $R$. Obviously, $R_0^{s},R_0^{t}\subsetneq Q_0$,   $1,2\notin R_0^{t}$ and $n,n-1\notin R_0^{s}$ for every set $R$ of zero relations on $Q$.

\begin{lem}
\label{lem:inj-proj env}
Let $R$ be a set of zero relations on $Q$. Then $EP(a)$ is projective, for every $a\in Q_0$.
\end{lem}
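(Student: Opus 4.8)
The plan is to combine the monomiality of $\mci$ with Lemma \ref{lem:new maximal path}. Recall first that since $Q=\A_n$ is linearly oriented, every indecomposable projective and every indecomposable injective is uniserial, its composition factors forming an interval of $\{1,\dots,n\}$ read from the top down to the socle. So fix $a\in Q_0$ and write $S(b)=socP(a)$; then $b\geq a$, the composition factors of $P(a)$ are $S(a),S(a+1),\dots,S(b)$ in radical order, and $EP(a)=E(socP(a))=ES(b)=I(b)$. It therefore suffices to show that $I(b)$ is projective.

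Since $\mci$ is generated by zero relations it is a monomial ideal: a path $p$ is nonzero in $A$ precisely when no relation of $R$ occurs as a subpath of $p$, and hence every subpath of a nonzero path is again nonzero. I will use two instances of this. First, the path $a\to a+1\to\cdots\to b$ is nonzero in $A$, because $S(b)$ is a composition factor of the uniserial module $P(a)=Ae_a$, which is spanned by the nonzero paths starting at $a$. Second, if $b<n$ then the path $a\to\cdots\to b\to b+1$ is zero in $A$, since otherwise $S(b+1)$ would be a composition factor of $P(a)$, contradicting $socP(a)=S(b)$.

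Next I locate the projective cover of $I(b)$. Let $c\leq a$ be the least vertex for which the path $y\colon c\to c+1\to\cdots\to b$ is nonzero in $A$; such a vertex exists since $a$ itself qualifies by the first instance above, and $c=a$ is permitted. The path $y$ is maximal: it cannot be prolonged on the left, since either $c=1$ or, by minimality of $c$, the path $(c-1)\to\cdots\to b$ is zero; and it cannot be prolonged on the right, since either $b=n$ or, were $c\to\cdots\to b\to b+1$ nonzero, its subpath $a\to\cdots\to b\to b+1$ would be nonzero, contradicting the second instance. Also $c\neq b$: if $c=b$ then $c=a=b$, so $P(a)=S(a)$, forcing $a=n$ (an arrow has length one and so is never a relation), whence $I(n)=S(n)$ would force the arrow $(n-1)\to n$ to be zero, which is absurd. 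Finally $P(c)$ and $I(b)$ are both uniserial because $Q=\A_n$, so Lemma \ref{lem:new maximal path}, applied to the maximal path $y$ from $c$ to $b$, yields $P(c)\cong I(b)=EP(a)$. As $a$ was arbitrary, this proves the lemma.

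The module-theoretic content here is short; I expect the only delicate point to be the combinatorial bookkeeping with the monomial ideal — in particular the ``subpath of a nonzero path is nonzero'' principle, which is what pins the socle of $P(c)$ down to $S(b)$ again — together with checking the boundary cases $c=1$, $b=n$ and $c=b$ so that the hypotheses of Lemma \ref{lem:new maximal path} are genuinely met.
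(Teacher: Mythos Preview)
Your argument is correct and follows essentially the same route as the paper: identify $socP(a)=S(b)$, find the least predecessor $c$ of $a$ with a nonzero path to $b$, observe that this path is maximal, and invoke Lemma~\ref{lem:new maximal path} to get $P(c)\cong I(b)=EP(a)$. The paper splits this into the two cases $c=a$ and $c<a$ and is terser about why the path cannot be prolonged; your version unifies the cases and makes the monomial ``subpath of a nonzero path is nonzero'' principle explicit, which is exactly what justifies that $socP(c)=S(b)$ again.
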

\begin{proof}
Let $a\in Q_0$ be an arbitrary vertex. Then $a$ may or may not be the source of a maximal path. 
If $a$ is the source of a maximal path, say $x$  with target $t(x)$, then by Lemma \ref{lem:new maximal path}
 $P(a)\cong I(t(x))$. Hence the injective envelope $EP(a)=I(t(x))$ of $P(a)$ is projective.

Now assume that $a$ is not the source of any maximal path. Let $y$ be the longest path with source $a$.
Then $EP(a)=E(socP(a))=E(S(t(y)))=I(t(y))$. The lemma follows if we show that $t(y)$ is the target of some maximal path. Since the
 path $y$ is not maximal, there exists the smallest predecessor $c$ of $a$ such that the path $z$ from $c$ to $t(y)$ is maximal. 
Thus $t(z)=t(y)$ and hence $I(t(y))=I(t(z))$ is projective.

 \end{proof}

The following Proposition gives  lower bound of $dom.dimA$.
\begin{prp}
\label{prp:lower bound}
Let $A$ be a bound quiver algebra of $\A_n$. Then\\ $dom.dim A\geq1$.
\end{prp}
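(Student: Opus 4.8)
The plan is to show that in the minimal injective resolution of $_AA$, the first term $I_1=\bigoplus_{i\in Q_0} EP(i)$ is projective, since $\mathrm{dom.dim}\,A\geq 1$ is exactly the statement that $I_1$ is projective-injective. By Lemma~\ref{lem:inj-proj env}, for every vertex $a\in Q_0$ the injective envelope $EP(a)$ is projective (equivalently projective-injective, being injective). Therefore $I_1=\bigoplus_{a\in Q_0} EP(a)$ is a finite direct sum of projective-injective modules, hence itself projective-injective.

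Concretely, I would first recall that the start of a minimal injective resolution of $_AA=\bigoplus_{i\in Q_0}P(i)$ has $I_1$ equal to the injective envelope $E\bigl(\bigoplus_{i\in Q_0}P(i)\bigr)=\bigoplus_{i\in Q_0}EP(i)$. Then I would invoke Lemma~\ref{lem:inj-proj env} to conclude each summand $EP(i)$ is projective. Finally, since a direct sum of projective modules is projective and each $EP(i)$ is by construction injective, every summand of $I_1$, and hence $I_1$ itself, is projective-injective; this is precisely the condition $\mathrm{dom.dim}\,A\geq 1$ in the definition of dominant dimension.

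I should also dispatch the degenerate possibility that $_AA$ is already injective: if $A$ is self-injective then $\mathrm{dom.dim}\,A=\infty\geq 1$ trivially; otherwise $I_1$ is the genuine first term of the resolution and the argument above applies verbatim. There is essentially no obstacle here — the content has been front-loaded into Lemma~\ref{lem:inj-proj env}, whose proof splits on whether $a$ is the source of a maximal path and uses Lemma~\ref{lem:new maximal path} together with the observation that the longest path out of $a$ can be extended backwards to a maximal path landing at the same sink. The only thing to be careful about is the bookkeeping that $EP(i)$ being injective by definition plus projective by the lemma yields projective-injective, so that the resolution satisfies the defining condition for dominant dimension at least one.
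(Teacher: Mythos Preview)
Your proposal is correct and follows essentially the same argument as the paper: reduce to showing $I_1=\bigoplus_{a\in Q_0}EP(a)$ is projective, then invoke Lemma~\ref{lem:inj-proj env} for each summand. The paper's proof is simply the terse version of what you wrote; your extra remarks on the self-injective case and on the internals of Lemma~\ref{lem:inj-proj env} are sound but not needed here.
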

\begin{proof}
	It is enough to prove that $I_1$ 
is projective. From Lemma \ref{lem:inj-proj env}, it follows that $EP(a)$ is projective for every $a\in Q_0$.
 Thus $I_1=\bigoplus\limits_{a\in Q_0} EP(a)$ is projective and hence $dom.dim A\geq1$.
\end{proof}

\begin{thm}
 \label{thm: lower-upper bound}
Let $A$ be a bound quiver algebra of $\A_n$. Then for a fixed $n\geq3$ $$1\leq dom.dimA\leq n-1.$$
\end{thm}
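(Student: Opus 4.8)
The plan is to combine the two bounds that have essentially already been assembled in the preceding results. For the lower bound, Proposition \ref{prp:lower bound} already gives $dom.dim A\geq 1$ for any bound quiver algebra of $\A_n$, resting on Lemma \ref{lem:inj-proj env} which shows $EP(a)$ is projective for every vertex $a$, so $I_1=\bigoplus_{a\in Q_0}EP(a)$ is projective-injective. Nothing more is needed there. For the upper bound, the natural move is to invoke Theorem \ref{thm:lub}: since $Q=\A_n$ is finite, connected and acyclic (and $n\geq 3$ so $Q\neq\A_1$), that theorem gives $dom.dim A\leq d\leq n-1$, where $d$ is the number of projective-injective $A$-modules. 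So the proof is essentially a one-line citation of the two earlier results, and the only thing to check is that the hypotheses of each match the present setting.

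Concretely, I would write: by Proposition \ref{prp:lower bound}, $dom.dim A\geq 1$; and since $Q=\A_n$ with $n\geq 3$ is a finite connected acyclic quiver different from $\A_1$, Theorem \ref{thm:lub} yields $dom.dim A\leq n-1$. Combining, $1\leq dom.dim A\leq n-1$. That is the whole argument. If one wanted to be more self-contained on the upper bound rather than quoting Theorem \ref{thm:lub}, the alternative is to observe directly that $A$ is directed (no oriented cycles in $\A_n$), hence any minimal injective resolution of $_AA$ strictly increases vertex labels at each step, so it has length at most $n$, and by Lemma \ref{lem:minimal} the last nonzero term cannot be projective; the number of projective terms is therefore at most $n-1$.

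The one genuinely substantive point worth flagging — more a sanity check than an obstacle — is that the upper bound $n-1$ here uses the crude estimate $d\leq n-1$ (an algebra that is not self-injective has at least one projective that is not injective), so the bound is stated in terms of $n$ rather than the sharper $d$. Since $\A_n$ itself need not be self-injective as a bound quiver algebra, this is legitimate. I expect no real obstacle: the main "work" was done in proving Lemma \ref{lem:inj-proj env}, Proposition \ref{prp:lower bound} and Theorem \ref{thm:lub}, and this theorem is just their conjunction specialized to $Q=\A_n$. The subsequent sections of the paper (the explicit formula in Proposition \ref{prp:formula A_n} and the examples) are what show these bounds are actually attained and that every value in between occurs; that sharpness is not part of the present statement and I would not attempt it here.

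\begin{proof}
By Proposition \ref{prp:lower bound}, $dom.dim A\geq 1$. On the other hand, $Q=\A_n$ with $n\geq 3$ is a finite, connected and acyclic quiver with $Q\neq\A_1$, so Theorem \ref{thm:lub} applies and gives $dom.dim A\leq d\leq n-1$, where $d$ is the number of projective-injective $A$-modules. Combining the two inequalities yields $1\leq dom.dim A\leq n-1$.
\end{proof}
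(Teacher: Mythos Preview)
Your proof is correct and matches the paper's own argument essentially line for line: the paper also cites Proposition \ref{prp:lower bound} for the lower bound and Theorem \ref{thm:lub} for the upper bound, with no additional work. Your extra verification that the hypotheses of Theorem \ref{thm:lub} are met is a harmless elaboration on the same approach.
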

\begin{proof}
 It follows immediately from Proposition \ref{prp:lower bound} that $dom.dimA\geq1$.
Since $A$ is a directed algebra, Theorem \ref{thm:lub} implies $ dom.dimA\leq n-1$.
This proves the Theorem.
\end{proof}

\begin{lem}
 \label{lem: source sink free}
Let $R$ be a set of relations such that the source (sink) of $\A_n$ is free. Then $dom.dimA=1$.
\end{lem}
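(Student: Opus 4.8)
The plan is to show that when the source (or dually the sink) of $\A_n$ is a free vertex, the minimal injective resolution of $_AA$ terminates exactly at the second step with a non-projective term, so that $dom.dim A = 1$. Since $dom.dim A \geq 1$ already holds by Proposition \ref{prp:lower bound} (equivalently Theorem \ref{thm: lower-upper bound}), the entire content is the upper bound $dom.dim A \leq 1$, i.e. that $I_2$ in the minimal injective resolution fails to be projective. Assume without loss of generality (by the duality between $A$ and $A^{\mathrm{op}}$, which swaps the roles of source and sink, projectives and injectives) that the source vertex $1$ is free.

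First I would identify the injective envelope $I_1$. By Lemma \ref{lem:inj-proj env} each $EP(a)$ is projective, so $I_1 = \bigoplus_{a\in Q_0} EP(a)$ is projective-injective; this is consistent with $dom.dim A \geq 1$. Next I would pass to the cokernel $C$ of $_AA \hookrightarrow I_1$ and analyze its injective envelope $I_2 = EC$. The key point is that because vertex $1$ is free, $P(1)$ is uniserial with simple socle $S(t)$ where $t$ is the target of the (unique) maximal path starting at $1$, and by Lemma \ref{lem:new maximal path} $P(1) \cong I(t)$ is projective-injective. One then tracks which injective indecomposables appear in $I_2$: I would argue that some $I(k)$ with $k \neq n$ (a sink-type injective that is \emph{not} the injective envelope of any projective, hence not projective by an argument parallel to the proof of Proposition \ref{prp:An}) must occur as a summand of $EC$. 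Concretely, the cokernel $C$ contains a simple composition factor $S(1)$ (coming from $P(1)$ being "used up" as $I(t)$ while the copies of $I(t)$ appearing as $EP(a)$ for various $a$ overcount the socle), and $ES(1) = I(1)$; since $1$ is a source and freeness forbids $1$ from being the target of a maximal path, $I(1)$ is not projective. Hence $I_2$ has a non-projective summand.

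The main obstacle — and the step I would spend the most care on — is the bookkeeping of the cokernel $C = I_1 / {_AA}$ and the precise determination of $soc\, C$, in particular showing that $S(1)$ (or dually $S(n)$) genuinely survives as a socle summand of $C$ rather than being killed. This requires knowing the multiplicities with which each $I(j)$ occurs in $I_1 = \bigoplus_a EP(a)$ versus the socle contributions of $_AA = \bigoplus_a P(a)$, which in turn depends on how many projectives $P(a)$ have socle $S(j)$ for each $j$; the freeness of the source vertex is exactly what forces an imbalance at vertex $1$. I would handle this by a direct dimension/composition-factor count in $\A_n$ with the given zero relations, using that all projectives and injectives over a quotient of $\A_n$ are uniserial (Lemma \ref{lem:new maximal path}) so that each $P(a)$ and each $I(j)$ is determined by an interval of vertices.

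Finally, once $I_2$ is shown to be non-projective, Lemma \ref{lem:minimal} (applied with $n=2$, $M = {_AA}$ non-injective since $A$ is not self-injective as $Q \neq \A_1$) together with $dom.dim A \geq 1$ forces $dom.dim A = 1$, completing the proof. The dual argument with the sink vertex free is obtained by applying the above to $A^{\mathrm{op}}$, whose quiver is again a linearly oriented $\A_n$ with source and sink interchanged.
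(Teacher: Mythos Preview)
Your approach is workable but unnecessarily global. The paper exploits the elementary fact that $dom.dim({_AA}) = \min_a dd\,P(a)$, so it suffices to exhibit a single indecomposable projective with dominant dimension exactly $1$. When vertex $1$ is free, the paper simply takes $P(2)$: since $1$ is not the source of any relation, $socP(1)=socP(2)=S(t^-)$ where $t$ is the smallest element of $R_0^t$; the path from $1$ to $t^-$ is maximal, so $P(1)\cong I(t^-)$, and the explicit resolution
\[
0 \longrightarrow P(2) \longrightarrow I(t^-)\cong P(1) \longrightarrow I(1) \longrightarrow 0
\]
with $I(1)=S(1)$ non-projective finishes the argument in one line. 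The sink case is handled symmetrically with $P(n)$ and $I(n-1)$, again by a direct two-line computation; no appeal to $A$--$A^{\mathrm{op}}$ duality is made (or needed, and that duality for dominant dimension is not established in the paper).

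Your version instead works with the full cokernel $C = I_1/{_AA}$ and tries to locate $S(1)$ in $soc\,C$ by a multiplicity argument. This can be made to work, but your stated mechanism (``copies of $I(t)$ \ldots\ overcount the socle'') is not the right picture: since $C = \bigoplus_a EP(a)/P(a)$ splits as a direct sum over $a$, the $a=2$ summand is already exactly $S(1)$, and no counting is required. Once you observe this, the ``bookkeeping'' you flag as the main obstacle disappears and you are essentially back to the paper's argument, just phrased less efficiently. Finally, your closing use of Lemma~\ref{lem:minimal} is misplaced: that lemma deduces non-projectivity of the last term \emph{from} minimality, whereas you have already exhibited a non-projective summand of $I_2$ directly, so $dom.dim A = 1$ follows immediately from the definition.
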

\begin{proof}
We have $dom.dimA\geq1$ from Proposition \ref{prp:lower bound}. We need to find a projective $P$ such that $ddP\ngtr1$.

First we assume that the source of $\A_n$ is free, that is, 1 is not the source of any zero relation in $R$.
Then the path from $1$ to $ t^{-}$ is maximal, where $t\in R_0^{t}$ is smallest. 
Hence $P(1)\cong I(t^{-})$ and $socP(2)=S(t^{-})$.
 Now $ddP(2)=1$, as obvious from the following resolution
\[0\rightarrow P(2)\rightarrow I(t^{-})\cong P(1)\rightarrow  I(1)\rightarrow 0\]
where $I(1)$ is not projective. Hence $dom.dimA=1$.

Next, suppose that the sink $n$ of $\A_n$ is free. Then $I(n)\cong P(s^{+})$ and $topI(n-1)=S(s^{+})$ where $s\in R_0^{s}$ is largest.
Because $I(n-1)$ is not projective, the resolution
\[0\rightarrow P(n)\rightarrow I(n)\rightarrow  I(n-1)\rightarrow0 \]
shows that $ddP(n)=1$, and ultimately $dom.dimA=1$.
\end{proof}

In view of Lemma \ref{lem: source sink free}, from now on  we assume that 
 $1\in R_0^{s}$ and  $n\in R_0^{t}$ for every set $R$ of zero relations on $Q$.

\begin{prp} Let $R$ be a set of zero relations on $Q$. 
 \label{prp: free}
\begin{itr}
\item If  $a\notin R_0^{s}$ and  $a^{+}\notin R_0^{t}$ for some $a\in Q_0$, then $dom.dimA=1$.
 \item  If $R$ is such that $Q_0$ contains a free vertex, then $dom.dimA\leq2$.
\end{itr}
\end{prp}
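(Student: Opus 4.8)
The plan is to prove the two parts separately, each time producing an explicit projective $P(i)$ whose dominant dimension is small enough to force the stated bound, using Proposition \ref{prp:lower bound} for the lower end.

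For part (i), suppose $a\notin R_0^{s}$ and $a^{+}\notin R_0^{t}$ for some $a\in Q_0$. The key observation is that under these hypotheses the arrow $a\to a^{+}$ is ``invisible'' to the relations: $a$ starts no relation and $a^{+}$ ends none. I would argue that the longest path starting at $a$ and the longest path ending at $a^{+}$ behave maximally in a controlled way, so that one can locate the injective envelope of $P(a^{+})$ (or a suitable nearby projective) and show its cokernel is a non-projective injective, hence $ddP(a^{+})=1$. Concretely: since $a^{+}\notin R_0^{t}$, the arrow into $a^{+}$ from $a$ survives, so $P(a^{+})$ contains $S(a)$ issues only via $topI$; more usefully, since $a\notin R_0^{s}$ the path from $a$ forward runs until it first hits the target of a relation, and one gets $P(a)\cong I(c^{-})$ for the smallest relevant target $c$, by Lemma \ref{lem:new maximal path}, together with $socP(a^{+})=S(c^{-})$ or similar. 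Then the minimal injective resolution
\[
0\to P(a^{+})\to I(c^{-})\cong P(a)\to I(a^{+})\to 0
\]
has non-projective second term (one must check $I(a^{+})$ is not projective, which follows because its top is $S(a)$ with $a$ not a sink-type vertex of a maximal path — this is the place to be careful), giving $ddP(a^{+})=1$ and hence $dom.dimA=1$ by the direct-sum formula. The main obstacle here is pinning down exactly which projective to use and verifying the cokernel injective is genuinely non-projective; this requires a small case analysis depending on whether $a^{+}$ lies before or after the relevant relation targets, essentially mimicking the proof of Lemma \ref{lem: source sink free}.

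For part (ii), suppose $Q_0$ contains a free vertex $v$, i.e. $v\notin R_0^{s}\cup R_0^{t}$. Since $v$ is free, in particular $v\notin R_0^{s}$, and its immediate successor $v^{+}$ may or may not be in $R_0^{t}$. If $v^{+}\notin R_0^{t}$, part (i) applies directly and gives $dom.dimA=1\leq 2$. So assume $v^{+}\in R_0^{t}$. Then there is a relation ending at $v^{+}$; since $v\notin R_0^{t}$ as well, look at $v^{-}$: because $v$ is not the target of any relation, no relation ends at $v$, so the arrow $v^{-}\to v$ is intact. The plan is to examine $P(v)$ directly: since $v\notin R_0^{s}$, the maximal path forward from $v$ reaches the first relation-target after $v$, which is $v^{+}$ itself (as $v^{+}\in R_0^{t}$), so $P(v)$ is short — $socP(v)=S(v^{+})$ and $P(v)$ is uniserial of length determined by how far back a relation reaches, giving $P(v)\cong I(v^{+})$ if the path from $v$ to $v^{+}$ is maximal, or else $EP(v)=I(v^{+})$ with a short resolution. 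Then the minimal injective resolution of $P(v)$ is
\[
0\to P(v)\to I(v^{+})\to I_2\to 0
\]
and the point is that $I_2$, after stripping its projective part, either vanishes (dominant dimension $\geq 2$ would need more, but we only want $\leq 2$) or is non-projective within at most two steps. So it suffices to show $ddP(v)\leq 2$, i.e. that the third term $I_3$ of the minimal injective resolution is zero or non-projective — and in fact $I_2$ already has a non-projective summand or the resolution terminates, because the cokernel of $P(v)\hookrightarrow I(v^{+})$ has socle controlled by $v^{+}\in R_0^{t}$ forcing an injective $I(w)$ with $w$ not of maximal-path-target type. The cleanest route is: show $ddP(v)=1$ unless a specific degenerate configuration occurs, and in that remaining configuration show $ddP(v)=2$ by exhibiting $I_3$ non-projective explicitly. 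Since $dom.dimA=\min_i ddP(i)\leq ddP(v)\leq 2$, we are done.

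The main obstacle in part (ii) is that, unlike part (i), we cannot always force dominant dimension exactly $1$ at the free vertex — the free vertex's successor being a relation target blocks the clean argument — so the argument must genuinely allow the value $2$ and produce a non-projective injective at the third step of the resolution of $P(v)$. I expect this to require tracking $socP(v^{+})$ and the injective envelope $E(\mathrm{coker})$ one stage deeper, using Lemma \ref{lem:new maximal path} to identify which injectives are projective and Lemma \ref{lem:minimal} to conclude the resolution's last nonzero term is non-projective. The bookkeeping is routine but the configuration split (whether $P(v)$ itself is projective-injective, and where the next relation sits relative to $v^{+}$) is where care is needed.
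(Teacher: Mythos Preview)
Your overall strategy for both parts matches the paper's: exhibit a specific indecomposable projective whose minimal injective resolution meets a non-projective term early. However, in each part the concrete resolution you write down is off by one in a way that matters.

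In part (i) the injective envelope $EP(a^{+})$ is $I(t^{-})\cong P(s^{+})$, where $s$ is the \emph{largest} element of $R_0^{s}$ with $s\leq a^{-}$; it equals $P(a)$ only when $a$ itself happens to lie in $R_0^{t}$. More importantly, the cokernel $EP(a^{+})/P(a^{+})$ has socle $S(a)$, not $S(a^{+})$, so the second injective in the resolution is $I(a)$, not $I(a^{+})$. The non-projectivity of $I(a)$ is then exactly the hypothesis $a^{+}\notin R_0^{t}$: since no relation has target $a^{+}$, every nonzero path ending at $a$ extends to $a^{+}$, so $a$ is not the target of any maximal path and $I(a)$ cannot be projective. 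Your sketch checks non-projectivity of the wrong module and offers the wrong reason.

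In part (ii) the claim $socP(v)=S(v^{+})$ is false. The hypothesis $v^{+}\in R_0^{t}$ says some relation \emph{ends} at $v^{+}$, but such a relation has source at most $v^{-}$ and therefore does not truncate any path starting at $v$; since $v\notin R_0^{s}$ one actually has $socP(v)=socP(v^{+})$, which typically lies well beyond $v^{+}$. So your resolution of $P(v)$ does not begin as written, and the argument built on it collapses. The paper proceeds differently: it first reduces, via (i), \emph{both} the case $v^{+}\notin R_0^{t}$ (which you handle) and the case $v^{-}\notin R_0^{s}$ (which you omit: apply (i) with $a=v^{-}$, using $(v^{-})^{+}=v\notin R_0^{t}$). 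In the remaining case $v^{-}\in R_0^{s}$ and $v^{+}\in R_0^{t}$, it resolves $P(v^{+})$ rather than $P(v)$: now the path from $v$ to $t^{-}$ is maximal, giving $EP(v^{+})=I(t^{-})\cong P(v)$, and the path from $s^{+}$ to $v$ is maximal, giving $I(v)\cong P(s^{+})$, so that
\[
0\to P(v^{+})\to I(t^{-})\cong P(v)\to I(v)\cong P(s^{+})\to I(v^{-})\to 0
\]
with $I(v^{-})$ non-projective precisely because $v\notin R_0^{t}$. This yields $ddP(v^{+})=2$ and hence $dom.dimA\leq 2$.
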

\begin{proof} 
(i) Assume that  $a\notin R_0^{s}$ and  $a^{+}\notin R_0^{t}$ for some $a\in Q_0$.
 Then $P(a^{+})$ is not injective and $I(a)$ is not projective.
 Now it is easy to see that  $P(s^{+})\cong I(t^{-})$, where 
$s\in R_0^{s}$  is largest but $s\leq a^{-}$ and  $t\in R_0^{t}$  is the target of zero relation with 
smallest source $s'\geq a^{+}$ if exists, otherwise $P(s^{+})\cong I(n)$.
Since  $socP(a)=socP(a^{+})=S(t^{-})$ or $S(n)$, we have the injective resolution
\[0\rightarrow P(a^{+})\rightarrow  P(s^{+})\cong I(t^{-})\text{ or } I(n)\rightarrow  I(a)\rightarrow \]
with $I(a)$ not projective. Hence  $dom.dimA=1$.

(ii) Let $a$ be a free vertex for $R$. All we need is to find a projective $P$ such that $ddP\ngtr2$. If  $a^{-}\notin R_0^{s}$ or
$a^{+}\notin R_0^{t}$, then it follows from (i) that $dom.dimA=1$, since $a\notin R_0^{s}$ and $a\notin R_0^{t}$.
Suppose $a^{-}\in R_0^{s}$  and   $a^{+}\in R_0^{t}$. Then the path from $a$ to $t^{-}$ is maximal, where  $t\in R_0^{t}$  
is the target of a zero relation with smallest source $s'\gneq a$. Consequently,  $P(a)\cong I(t^{-})$ and  $socP(a^{+})=S(t^{-})$.
Similarly path from $s^{+}$ to $a$ is maximal, where $s\in R_0^{s}$ is the source of a zero relation 
with largest target $t'\lneq a$. Hence it follows that $ P(s^{+})\cong I(a)$ and $topI(a^{-})=S(s^{+})$. Now $P(a^{+}) $ has the resolution 
\[0\rightarrow P(a^{+})\rightarrow I(t^{-})\cong P(a)\rightarrow  I(a)\cong P(s^{+})\rightarrow I(a^{-})\rightarrow 0\]
where $ I(a^{-})$ is not projective, and therefore $ddP(a^{+})=2$. Hence $dom.dimA$ can not exceed two, or $dom.dimA\leq2$.
\end{proof}
In the following we list some of those sets which satisfy the conditions of the Proposition \ref{prp: free}.
\begin{rem}
We observe that there always exists a free vertex for the following sets $R$ of relations on $\A_n$, where $n$ is fixed.
\begin{enumerate}
 \item Every two relations in $R$ are disjoint.
\item For all $ a_1, a_2\in R_0^{s}, \quad  a_1<a_2$ implies $ m_1<m_2$ ($ m_1>m_2$) where 
$m_i$ is the length of relation starting at $a_i$.
\end{enumerate}
If for all $ a_1, a_2\in R_0^{s}, \quad  a_1<a_2$ implies $ m_1<m_2$ or $ m_1>m_2$, 
 then either there exists a free vertex, or $R$ satisfies (i) of Proposition \ref{prp: free}.
 Hence in each case dominant dimension can not be greater than two.
If $R$ is such that $a\in\{2,3,\cdots,\ m-1\}\cup \{n-m'+2,\cdots,n-1\}$ is free,
where $m$ ($m'$) is the length of zero relation starting (ending) at 1 ($n$). Then, in fact, 
$R$ again satisfies (i) of Proposition \ref{prp: free}.

Thus, in order to generate larger dominant dimensions, it is now essential to consider the sets of
fully overlapped zero relations on $Q$.
\end{rem}

\begin{prp}
\label{prp:formula A_n}
Let $R$ be the full set of fully overlapped zero relations of the same fixed length $m\geq2$.
\begin{itr}
 \item For $m=2$, $ dom.dimA=n-1$.
\item For $m\geq3$
\[
  dom.dimA=
    \begin{cases}
       \frac{2n-(m+2j)}{m} & j=1,2,3,\cdots ,m-2\\
       \frac{2n-2j}{m} & j=m-1\\
       \frac{2n-j}{m} & j=m
    \end{cases}   \]         
\end{itr}
where $n\in m\N+j$.
\end{prp}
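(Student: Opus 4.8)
The plan is to compute the minimal injective resolution of $_AA$ explicitly when $R$ consists of the full set of fully overlapped zero relations of a fixed length $m$. First I would set up notation: with $n\in m\N+j$, the relations are the paths of length $m$ starting at vertices $1,2,\dots,n-m$, so $R_0^s=\{1,\dots,n-m\}$ and $R_0^t=\{m+1,\dots,n\}$; every vertex in $\{2,\dots,n-1\}$ is both the source and the target of some relation (no free vertices), which is exactly why this family produces large dominant dimension. The key structural observation is that each $P(i)$ is uniserial of length $\min(m,n-i+1)$ with $\operatorname{soc}P(i)=S(\min(i+m-1,n))$, and dually each $I(i)$ is uniserial of length $\min(m,i)$ with $\operatorname{top}I(i)=S(\max(i-m+1,1))$. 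By Lemma~\ref{lem:new maximal path}, $P(a)\cong I(b)$ precisely when the path from $a$ to $b$ is maximal; here the maximal paths are exactly the ones of length $m$ (the relations themselves, viewed as paths) together with the short ``tail'' path from $n-j+1$ to $n$ when $j<m$. So the projective-injectives are $I(m+1),\dots,I(n)$ coming from $P(1),\dots,P(n-m)$, plus possibly $I(n)\cong P(n-j+1)$ again.

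Next I would run the injective coresolution starting from $I_1=\bigoplus_{i}EP(i)$. For $i\le n-m$, $EP(i)=I(i+m-1\!+\!1)=I(i+m)$ is projective-injective (here I must be careful with the indexing: $\operatorname{soc}P(i)=S(i+m-1)$, so $EP(i)=I(i+m-1)$ — I will pin this down). For $i>n-m$, $EP(i)=I(n)$, still projective-injective, so $I_1$ is projective and $\operatorname{dom.dim}A\ge 1$, recovering Proposition~\ref{prp:lower bound}. The cokernel $I_1/{_AA}$ decomposes as a sum of modules, each of which is again uniserial and is the injective envelope of its socle; iterating, each syzygy-type step shifts the relevant vertex index up by $m$ (because composing two maximal paths of length $m$ overlaps by one vertex but advances the socle by $m$). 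Thus $I_k$ is a direct sum of modules $I(\cdot)$ whose indices are obtained from the starting data by adding $(k-1)m$, and such a summand is projective-injective exactly as long as its index, minus $m-1$ or so, still lies in $R_0^s$, i.e. is $\le n-m$ — the resolution ``runs off the end'' of $\A_n$ after roughly $(2n)/m$ steps. The case split on $j=1,\dots,m-2$ versus $j=m-1$ versus $j=m$ comes from exactly how the last partial block at the sink end of $\A_n$ behaves: when $j=m$ the relations tile $\A_n$ evenly, when $j=m-1$ there is a single extra vertex creating one short maximal tail path, and for smaller $j$ the tail has length $j<m-1$ and interacts differently with the last projective-injective step. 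For $m=2$ every arrow is a maximal path, every $P(i)$ with $i<n$ is projective-injective, and the resolution has length $n-1$, giving item (i); this case I would dispatch separately and quickly.

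Concretely the steps are: (1) describe $P(i)$, $I(i)$, their socles and tops; (2) identify all maximal paths and hence, via Lemma~\ref{lem:new maximal path}, all projective-injective modules and all isomorphisms $P(a)\cong I(b)$; (3) compute $I_1,I_2,\dots$ inductively, showing $I_k$ is a sum of uniserials whose indices advance by $m$ each step and which are projective-injective until the index exceeds the range allowed by $R_0^s$; (4) pinpoint, by Lemma~\ref{lem:minimal}, the first $k$ for which $I_{k+1}$ fails to be projective, and read off $\operatorname{dom.dim}A=k$; (5) carry out the elementary arithmetic $n\in m\N+j$ in the three sub-cases to match the stated formula. I expect the main obstacle to be step (3)–(4): bookkeeping the index shifts and, in particular, handling the boundary at the sink where the last partial block of length $j$ sits — this is precisely where the three-way case distinction on $j$ is forced, and getting the off-by-one ends of the resolution right (which $I_k$ is the last projective one) is the delicate part. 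The interior of the resolution is completely uniform; all the subtlety lives at the two ends of $\A_n$.
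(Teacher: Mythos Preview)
Your plan is sound and lands on the same computation as the paper, but the organization is a little different and a few indices are off.

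What the paper does: it first singles out the non-injective projectives $P(n-m+2),\dots,P(n)$ and then runs the resolution of the extremal one $P(n-m+2)$. The key observation is a genuine recursion: after exactly two steps of the injective resolution, the cosyzygy of $P(n-m+2)$ is isomorphic to the corresponding projective $P'(n-2m+2)$ over the smaller algebra $KQ_{n-m}/\mci_m$, so $ddP(n-m+2)=2+dd(P'(n-2m+2),Q_{n-m})$. Iterating reduces to a base case $Q_{m+j}$ with $j\in\{1,\dots,m\}$, which is then handled by a short direct computation giving the three-way split on $j$. A separate (short) argument then shows $ddP(n-m+2)\le ddP(k)$ for the remaining non-injective projectives, so the minimum is indeed taken at $k=n-m+2$.

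Your approach---tracking the whole resolution of $_AA$ and watching the indices---is essentially the unrolled version of that recursion. It has the virtue that you never need the separate ``which projective is worst'' inequality: you see all summands at once. The two points to fix: (a) the projective-injectives are $P(1),\dots,P(n-m+1)\cong I(m),\dots,I(n)$, not $I(m+1),\dots,I(n)$ from $P(1),\dots,P(n-m)$, and there is no extra ``tail'' isomorphism $I(n)\cong P(n-j+1)$---it is always $I(n)\cong P(n-m+1)$; (b) in the injective coresolution the indices of the injective envelopes \emph{decrease}, and the shift by $m$ happens every \emph{two} steps, not every step (your parenthetical ``composing two maximal paths'' already hints at this). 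Concretely, one pass takes the cosyzygy module $M(i,n)$ (uniserial from $i$ to $n$) first into $I(n)\cong P(n-m+1)$ with cokernel $M(n-m+1,i-1)$, then into $I(i-1)$ with cokernel $M(i-m,n-m)$: that is the period-two pattern and exactly the paper's recursion $Q_n\rightsquigarrow Q_{n-m}$. Once you phrase it this way your steps (3)--(5) go through, and the boundary analysis at the small end (where $I(\cdot)$ first has index $<m$ and stops being projective) gives the three cases on $j$.
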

\begin{proof} (i) If $m=2$,
then  $P(i)\cong\ I(i+1)$ for all $ i=1,2,\cdots,n-1$, and thus
$P(n)$ is the only projective which is not injective.
The minimal injective resolution of  $P(n)$ becomes
{\footnotesize
 \[0\rightarrow
 \begin{matrix}
        n
 \end{matrix} \rightarrow 
\begin{matrix}
  n-1\\
   n
 \end{matrix} \rightarrow 
 \begin{matrix}
n-2 \\
\boldsymbol{ n-1}                                                              
\end{matrix}\rightarrow
 \begin{matrix}
n-3\\
\boldsymbol{ n-2}                         
\end{matrix}\rightarrow \cdots \rightarrow
\begin{matrix}
2\\
\boldsymbol{3}
 \end{matrix}\rightarrow
\begin{matrix}
 1\\
\mathbf{2}
\end{matrix}\rightarrow
\begin{matrix}
\mathbf{1}
\end{matrix}\rightarrow 0\]
}where $I(1)$ is not projective, and hence $ dom.dimA=ddP(n)=n-1$.

(ii) The projectives  $P(i=n-m+2,n-m+3,\cdots,n-1,n)$ are not injective.
We first find $ddP(n-m+2)$. We note that the injectives $I(i=1,2,\cdots,m-1)$ are not projective.

Consider the resolution 
{\scriptsize
    \[
     0\rightarrow
 \begin{matrix}
        n-m+2 \\
        n-m+3 \\
        \vdots\\
        n
 \end{matrix} \rightarrow \begin{matrix}
                                    n-m+1 \\
                                    n-m+2 \\
                                    \vdots\\
                                    n
                             \end{matrix} \rightarrow  \begin{matrix}
                                                                n-2m+2 \\
                                                                n-2m+3\\
                                                                \vdots\\
                                                               \boldsymbol{n-m+1}\\
							       \end{matrix}\rightarrow 
\begin{matrix}
  n-2m+1\\
\boldsymbol{n-2m+2}\\
\vdots\\
\boldsymbol{n-m}\\
\end{matrix}\rightarrow
\begin{matrix}
 n-3m+2\\
 n-3m+3\\
\vdots\\
\boldsymbol{n-2m+1}
\end{matrix}\rightarrow 
\begin{matrix}
  n-3m+1\\
\boldsymbol{n-3m+2}\\
\vdots\\
\boldsymbol{n-2m}\\
\end{matrix}\rightarrow
\]}where the second cokernel
 \[\begin{matrix}  
 n-2m+2 \\
 n-2m+3\\
\vdots\\
 n-m
\end{matrix} \]
is the indecomposable projective $KQ_{n-m}/\mci_m$-module  $P'(n-2m+2)$, and $ Q_{n-m}$ with $ (Q_{n-m})_{0}=\{1,2,\cdots,n-m\}$ is a subquiver of
$Q_n$. Because exactly two injectives  are projective to obtain the subquiver $Q_{n-m}$, and
since all the projective-injective $KQ_{n-m}/\mci_m$-modules are the projective-injective $A$-modules, so we have
 \[ ddP(n-m+2)=2+dd(P'(n-2m+2),Q_{n-m})\]
where  $\mci_m$ is an admissible ideal of $KQ_{n-m}$ generated by the full set of zero relations
of the same length $m$. By the similar arguments, we obtain
\[ dd(P'(n-2m+2),Q_{n-m})=2+dd(P''(n-3m+2),Q_{n-2m})\]
Therefore $dd(P(n-m+2),Q_n) $  becomes
\[ \begin{array}{ccl}
        ddP(n-m+2) & = &2+dd(P'(n-2m+2),Q_{n-m})\\
                        & = & 2+2+dd(P''(n-3m+2),Q_{n-2m}).
        \end{array}
        \]
Hence  proceeding in this way, we get
\begin{equation}
\label{eq r}
         \begin{array}{ccl}
        ddP(n-m+2) & = &\overbrace{2+2+\cdots+2}^{x \text{ times }}+dd(\dot{P}(r-m+2),Q_r)\\
                        & = & 2x+dd(\dot{P}(r-m+2),Q_r)
        \end{array}
\end{equation}
 where $n\in m\N+j$, $r=m+j$ with $j=1,2,\cdots,m-2,m-1,m$, and
$x$ times $m$ is subtracted from $n$ to obtain such $r$. 
Thus \[n-mx=r \Rightarrow x=\frac{n-r}{m}\quad\text{ and } \quad2x=\frac{2n-2r}{m}\]
Substituting the values of $x$ and $r$ in \eqref{eq r} and get 
 \[ ddP(n-m+2) =     \frac{2n-2(m+j)}{m}+dd(\dot{P}(j+2),Q_{m+j})\]
where  $j=1,2,\cdots,m-1,m$.
Now the same process gives
 \[
        \begin{array}{ccl}
         ddP(n-m+3) & = & \frac{2n-2(m+j)}{m}+dd(\dot{P}(j+3),Q_{m+j}) \\
\vdots&&\vdots\\
         ddP(n-m+m-1) & = & \frac{2n-2(m+j)}{m}+dd(\dot{P}(j+m-1),Q_{m+j})\\
        ddP(n) & = &  \frac{2n-2(m+j)}{m}+dd(\dot{P}(j+m),Q_{m+j})
        \end{array}
    \]

Next we prove that
\begin{equation}
\label{ineq1}
     ddP(n-m+2)\leq ddP(k)
\end{equation}
for all  $ k=n-m+3,n-m+4,\cdots,n-1, n$.
To prove \eqref{ineq1}  we have to show  for all  $ j=1,2,\cdots,m-2,m-1,m$ that
\begin{equation}
\label{ineq2}
   dd(\dot{P}(j+2),Q_r)\leq dd(\dot{P}(j+i),Q_r) 
\end{equation}
where $ i=3,4,\cdots,m$. Let us consider the resolution
 {\tiny\[
     0\rightarrow
 \begin{matrix}
        j+2 & j+3&\cdots &j+m\\
        j+3 &\vdots \\
         \vdots &j+m\\
        j+m
\end{matrix} \rightarrow 
\begin{matrix}
j+1 &\cdots &j+1 \\
j+2 &&j+2\\
\vdots & &\vdots\\
j+m&&j+m
\end{matrix} \rightarrow
\begin{matrix}
j-m+2 &j-m+3 &\cdots &j\\
j-m+3 &j-m+4 & &\boldsymbol{j+1}\\
\vdots & \vdots &&\vdots \\
j& \boldsymbol{j+1}&& \boldsymbol{j+m-2} \\
\boldsymbol{j+1}&\boldsymbol{j+2}&&\boldsymbol{j+m-1}
\end{matrix}\rightarrow\]

\[ \rightarrow
\begin{matrix}
j-m+1&j-m+1 &\cdots &j-m+1\\
\boldsymbol{j-m+2}&j-m+2 &&j-m+2\\
\boldsymbol{j-m+3}&\boldsymbol{j-m+3}&&j-m+3\\
\vdots&\vdots&& \vdots  \\
\boldsymbol{j}&\boldsymbol{j}&&\boldsymbol{j}
\end{matrix} \rightarrow
\begin{matrix}
\boldsymbol{j-m+1} &\boldsymbol{j-m+1}&\cdots &\boldsymbol{j-m+1}\\
&\boldsymbol{j-m+2}&&\boldsymbol{j-m+2}\\
&&&\boldsymbol{j-m+3} \\
&&& \vdots  \\
&&&\boldsymbol{j-1}
\end{matrix} \rightarrow 0 \]
}Since $I(j+1)$ is not projective for each $j=1,2,\cdots,m-2$, 
it follows from the above resolution that 
       \[ dd(\dot{P}(j'),Q_r)=1<2= dd(\dot{P}(i'),Q_r)\]
where  $ j'=j+2,j+3,\cdots,m$ and  $ i^{'}=m+1,m+2,\cdots,m+j=r$.
For $j=m-1$, the resolution shows that  
   \[dd(\dot{P}(j+2),Q_r) =2= dd(\dot{P}(j+i),Q_r) \text{ for all } i=3,4,\cdots,m\] 
because  $I(j)=I(m-1)$ is not projective.                                                                 
Finally, for $j=m$,  $I(j-m+1)=I(1)$ is not projective and therefore
        \[dd(\dot{P}(j+2),Q_r) =3= dd(\dot{P}(j+i),Q_r) \text{ for all } i=3,4,\cdots,m.\] 
 Thus
 \[
    \begin{array}{ccl}
      dom.dimA & = & ddP(n-m+2)\\
                & = &
                       \begin{cases}
                       \frac{2n-2(m+j)}{m}+1 & j=1,2,\cdots,m-2 \\
                       \frac{2n-2(m+j)}{m}+2  & j=m-1\\
                      \frac{2n-2(m+j)}{m}+3  & j=m
\end{cases}\\
 &=&  \begin{cases}
       \frac{2n-(m+2j)}{m} & j=1,2,3,\cdots ,m-2\\
       \frac{2n-2j}{m} & j=m-1\\
       \frac{2n-j}{m} & j=m
    \end{cases}                     

    \end{array}
    \]
with $n\in m\N+j$.
\end{proof}

\section{Bound quiver algebras of general trees}
\label{sec: general tree}

Throughout the following we assume that  $Q\neq \A_n$ is a finite tree. 
\begin{ntn}
 We denote by $b(k,l)$ a branching vertex $b$ such that $b$ is the target of $k$ arrows and the source of $l$ arrows,
where $k,l\in\N$ are not necessarily equal.
Hence $b(k,0)$ and $b(0,l)$ denote receptively  the sink of $k$ arrows and the source of $l$ arrows.

\end{ntn}

\begin{lem}
Let $B$ be a bound quiver algebra of $Q$. If $Q$ contains a branching vertex $b(k,0)$ or $b(0,l)$ with $k,l\geq2$, then $dom.dimB=0$.
\end{lem}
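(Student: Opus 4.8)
The plan is to show that the injective envelope $I_1 = \bigoplus_{a \in Q_0} EP(a)$ in the minimal injective resolution of $_BB$ contains a non-projective summand; by the additivity $dom.dim(M\oplus N) = \min(dom.dimM, dom.dimN)$ this forces $dom.dimB = 0$. The natural candidate summand to track comes from the branching vertex $b$ itself. Suppose first that $Q$ contains a sink $b = b(k,0)$ with $k \geq 2$; dually we handle a source $b = b(0,l)$ with $l \geq 2$. Since $b$ is a sink, $S(b)$ is its own injective-free... more precisely, $I(b)$ is the injective with socle $S(b)$, and I want to argue that $I(b)$ (or rather, some injective appearing as a summand of $I_1$) is not projective.

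First I would locate a vertex whose projective has $I(b)$ as an envelope-summand. Pick any arrow $\alpha\colon c \to b$ into $b$ (there are $k \geq 2$ of them, say $c_1 \to b$ and $c_2 \to b$). Let $P(a)$ be chosen so that a longest path from $a$ passes through one of these arrows and ends at $b$; since $b$ is a sink, any such maximal path ends at $b$, so $S(b)$ is a summand of $socP(a)$ and hence $I(b) = ES(b)$ is a summand of $EP(a)$, a summand of $I_1$. The key step is then: $I(b)$ is \emph{not} projective. Indeed, if $I(b) \cong P(a')$ for some $a'$, then by Lemma~\ref{lem:new maximal path} (applied with the tree $Q$) there is a maximal path $x$ from $a'$ to $b$ with both $P(a')$ and $I(b)$ uniserial; in particular $topI(b) = S(a')$ is simple. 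But $b = b(k,0)$ with $k \geq 2$ has at least two immediate predecessors $c_1, c_2$, and the trivial paths $c_i \to b$ are each either maximal or extend to maximal paths ending at $b$; reading off $topI(b)$, this produces at least two distinct simple summands $S(a_1') \oplus S(a_2')$ (one for each incoming arrow's maximal-path source), contradicting simplicity of $topI(b)$. Hence $I(b)$ is non-projective.

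The dual argument handles $b = b(0,l)$ with $l \geq 2$: now $b$ is a source, $P(b)$ is... one checks $S(b)$ occurs in $topI(d)$ for a suitable vertex $d$ reached by a maximal path starting at $b$, so $I(d)$ has $S(b)$ in its top... actually the cleaner statement is that $P(b)$ is not injective, because $I(b') \cong P(b)$ would force $socP(b)$ simple, whereas the $l \geq 2$ outgoing arrows at $b$ force at least two simple summands in $socP(b)$ by the same reasoning as in Lemma~\ref{lem:2 simple summands}. Then $EP(b)$ is an injective summand of $I_1$ that is not projective: if $EP(b)$ were projective it would be injective-projective with socle containing the non-simple $socP(b)$... here I would instead directly note that $EP(b)$ projective would make $P(b) \hookrightarrow EP(b)$ split off a projective-injective, and compare with the failure of $P(b)$ to be injective to derive a contradiction. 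Either way $I_1$ is non-projective.

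The main obstacle I anticipate is bookkeeping at the branching vertex: making precise the claim that distinct incoming (resp. outgoing) arrows at $b$ contribute \emph{distinct} simple summands to $topI(b)$ (resp. $socP(b)$), rather than possibly coinciding. In a tree this is automatic — two distinct arrows into $b$ come from distinct immediate predecessors $c_1 \neq c_2$, and the maximal paths ending at $b$ through $c_1$ and through $c_2$ have distinct sources because a common source would yield two distinct paths to $b$, violating the tree hypothesis — so the argument of Lemma~\ref{lem:2 simple summands} goes through verbatim with $x$ replaced by any longest path ending at $b$. Once that is nailed down, the contradiction with Lemma~\ref{lem:new maximal path} is immediate and the proof closes.
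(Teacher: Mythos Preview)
Your treatment of the sink case $b=b(k,0)$ is essentially correct, though the paper takes the shortest route: since $b$ is a sink, $P(b)=S(b)$ is simple, so $EP(b)=I(b)$ directly and there is no need to hunt for an auxiliary vertex $a$. (Your choice of $a$ via a ``longest path'' is also delicate in the bound setting, since such a path could be zero in $B$; taking $a=b$ sidesteps this entirely.) The non-projectivity of $I(b)$ then follows exactly as you argue, from $topI(b)$ having at least two simple summands.

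The source case $b=b(0,l)$ has a genuine gap. You correctly establish that $socP(b)$ is non-simple and hence $P(b)$ is not injective, but the step from there to ``$EP(b)$ is not projective'' does not go through as written. The inclusion $P(b)\hookrightarrow EP(b)$ is an injective envelope of a non-injective module, so it certainly does \emph{not} split; and there is no principle forcing an indecomposable projective submodule of a projective to be a direct summand outside the hereditary setting. Nor does a non-simple socle help: a direct sum of several indecomposable projective-injectives can have non-simple socle without contradiction.

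The fix is the approach you began and then set aside: decompose $EP(b)=\bigoplus_i I(a_i)$ where the $a_i$ are the targets of the maximal paths out of $b$, and show each $I(a_i)$ is non-projective. Since $b$ is a source, the nonzero path from $b$ to $a_i$ cannot be extended to the left, so $S(b)\subseteq topI(a_i)$; if $I(a_i)$ were projective its top would be simple, forcing $I(a_i)\cong P(b)$, which contradicts $socI(a_i)=S(a_i)$ simple against $socP(b)$ non-simple. This is precisely the paper's argument.
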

\begin{proof}
 	If $Q$ contains a branching vertex $b(k,0)$ where $k\geq2$. Then $P(b)$ is
 simple and the $topI(b)$ contains at least $k\geq2$ simple summands. Thus $I(b)$ can not be  projective. Hence
$EP(b)=I(b)$ is not projective, and we have  $dom.dim B=0.$

Next we assume that $Q$ contains a branching vertex $b(0,l)$ where $l\geq2$. Then the socle of $P(b)$ is not simple; 
indeed $socP(b)=\bigoplus\limits_{1\leq i\leq l} S(a_i)$, where $a_i\in Q_0$ are such that there exists a maximal path
from $b$ to each $a_i$. Now $topI(a_i)$ contains the simple summand $ S(b)$, 
and thus $I(a_i)$ is not  projective. Hence
$EP(b)=\bigoplus\limits_{1\leq i\leq l}  I(a_i)$ is not projective, and again $dom.dim B=0.$
\end{proof}
\begin{dfn}
By a \emph{left arm} of a tree $Q$ we mean a subquiver of $Q$ which is linearly oriented $\A_n$ with $n\geq1$ from a source to an immediate predecessor
of a branching vertex. Similarly, a \emph{right arm} of $Q$ is a subquiver of $Q$ which is linearly oriented $\A_n$ with $n\geq1$ from an immediate successor
of a branching vertex to a sink. 
\end{dfn}

A left (right) arm is said to be trivial if it is $\A_n$ with $n=1$, otherwise it is called non-trivial. A tree is said to be a \emph{tree without arms}
if it has no non-trivial arms. 
We always denote by  $Q'=(Q_0',Q_1')$ a tree without arms, while the subset of $Q_0'$  obtained by dropping sources and sinks of $Q'$ will be 
denoted by $Q_0''$. Note that both $Q$ and $Q'$ always have the same number of sources and sinks.

\subsection{Trees without arms}
\label{sec:without arms}
 Throughout the section, $B'=KQ'/\mci'$ is a bound quiver algebra of a tree $Q'$ without arms, where 
$\mci'$ is an admissible ideal of  $KQ'$ generated by a set $R'$ of zero relations on $Q'$.

\begin{dfn}
\label{dfn:*}
Let  $Q'$ be a tree without arms.
The conditions $(\ast)$ on  $Q'$ are defined as

\begin{itr}
 \item For each source $a$ and sink $c$, both $P(a)$ and $I(c)$ are uniserial.
\item For each $i\in Q_0'$, $socP(i)=\oplus S(c)$, where each $c$ is a sink.
\item  For each $i\in Q_0'$, $topI(i)=\oplus S(a)$, where each $a$ is a source.
\end{itr}

\end{dfn}
\begin{rem}
 In fact, when $Q'$ has an equal number of sources and sinks, (ii)  implies (iii), and vice versa.
\end{rem}

 \begin{lem}
\label{lem:source-sink equal}
  If a set $R'$ of zero relations on $Q'$  satisfies the conditions $(\ast)$, then  $Q'$ has
 equal number of sources and sinks.
 \end{lem}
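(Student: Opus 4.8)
The plan is to show that the conditions $(\ast)$ force a bijection between sources and sinks of $Q'$, by pairing each source $a$ with the unique sink at the bottom of the uniserial projective $P(a)$, and conversely each sink $c$ with the unique source at the top of the uniserial injective $I(c)$. First I would fix a source $a$. By condition (i), $P(a)$ is uniserial, so $\operatorname{soc}P(a)$ is simple; by condition (ii), $\operatorname{soc}P(a)=S(c)$ for some sink $c$. This assigns to every source $a$ a sink $\sigma(a)$. Dually, fixing a sink $c$: by (i), $I(c)$ is uniserial, hence $\operatorname{top}I(c)$ is simple, and by (iii) it equals $S(a)$ for some source $a$; this assigns to every sink $c$ a source $\tau(c)$.

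Next I would check that $\sigma$ and $\tau$ are mutually inverse, which gives the bijection. If $\operatorname{soc}P(a)=S(c)$, then since $Q'$ is a tree there is a path $x$ from $a$ to $c$, and $x$ is maximal: it cannot extend on the left because $a$ is a source, and it cannot extend on the right because $c$ is a sink. Since $P(a)$ is uniserial with socle $S(c)$ and $x$ is maximal, Lemma \ref{lem:new maximal path} applies (or one argues directly) to give $P(a)\cong I(c)$; in particular $\operatorname{top}I(c)=\operatorname{top}P(a)=S(a)$, so $\tau(c)=a$. The reverse composition $\sigma(\tau(c))=c$ follows by the symmetric argument starting from $\operatorname{top}I(c)=S(a)$. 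Hence $\sigma$ is a bijection from the set of sources of $Q'$ onto the set of sinks of $Q'$, and the two sets have equal cardinality.

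The main obstacle — really the only subtle point — is justifying that the path $x$ from $a$ to $\sigma(a)$ is genuinely maximal and that uniseriality of both $P(a)$ and $I(\sigma(a))$ is available to invoke Lemma \ref{lem:new maximal path}: uniseriality of $P(a)$ is condition (i) directly, but uniseriality of $I(\sigma(a))$ also needs condition (i), applied now at the sink $\sigma(a)$. Once one notes that conditions (i)–(iii) are quantified over all sources and all sinks, this is immediate. One should also remark that $\sigma$ is well-defined (the sink $c$ with $\operatorname{soc}P(a)=S(c)$ is unique precisely because $\operatorname{soc}P(a)$ is simple, which uses uniseriality), and injective (if $\sigma(a_1)=\sigma(a_2)=c$ then $a_1=\tau(c)=a_2$), so no appeal to a finiteness/counting argument beyond the bijection itself is needed. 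I would close by remarking, as in the Remark preceding the lemma, that under (i) the equality of the number of sources and sinks is exactly what makes (ii) and (iii) equivalent, but that is not needed for the present statement.
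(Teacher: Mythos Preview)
Your argument is correct, and it follows the same basic setup as the paper: both define the map $\sigma$ from sources to sinks via $\operatorname{soc}P(a)=S(c)$ and the dual map $\tau$ via $\operatorname{top}I(c)=S(a)$. The difference lies in how the bijection is established. The paper argues by contradiction and pigeonhole: if there were more sources than sinks, $\sigma$ could not be injective, so two sources $a_1,a_2$ would share a sink $c$, forcing $I(c)$ to have two summands in its top and contradicting uniseriality; the case of more sinks than sources is handled symmetrically via $\tau$. You instead prove directly that $\sigma$ and $\tau$ are mutually inverse, by invoking Lemma~\ref{lem:new maximal path} to conclude $P(a)\cong I(\sigma(a))$ and hence $\tau(\sigma(a))=a$.

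Your route is slightly more constructive and in effect simultaneously proves Corollary~\ref{lem:conditions *} (the one-to-one correspondence $P(a)\cong I(c)$), which the paper deduces separately afterwards. The paper's route is marginally more self-contained, needing only the definition of uniseriality rather than the characterization in Lemma~\ref{lem:new maximal path}. Either way the content is the same; your version just packages Lemma~\ref{lem:source-sink equal} and Corollary~\ref{lem:conditions *} into one step.
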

\begin{proof}
 Assume, to the contrary, that the number of sources is not equal to the number of sinks.
Let $X$ and $Y$ be the sets of sources and sinks of $Q'$, respectively.
 (i) and (ii) of the conditions $(\ast)$ imply that for each source $a$, $socP(a)=S(c)$
for some sink $c$. This defines a map, say,  $f:X\rightarrow Y$ sending each source $a$ to a
unique sink $c$.
 If the number of sources is greater than the number of sinks,
then there exist at least two sources, say $a_1$ and $a_2$, and a sink $c$ such that $f(a_1)=c=f(a_2)$.
This implies that $f$ is not injective. Consequently $I(c)$ is not uniserial, a contradiction to (i).

Dually, (i) and (iii) together define a map  $g:Y\rightarrow X$ by $g(c)=a$ such that 
 $topI(c)=S(a)$. Now if there are more sinks than sources, then there are at least two
sinks, say $c_1$ and $c_2$, and a source $a$ such that $g(c_1)=a=g(c_2)$. Hence 
$g$ is not injective as well, and consequently $P(a)$ is not uniserial, again
a contradiction to (i). Hence $Q'$ has  equal number of sources and sinks.

\end{proof}
An immediate consequence is the following 
\begin{cor}
\label{lem:conditions *}
 The conditions $(\ast)$ imply:
sources and sinks are in one-to-one correspondence:
 there is a unique maximal path from each source $a$ to a unique sink $c$ such that $P(a)\cong I(c)$.
\end{cor}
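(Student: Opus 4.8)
**Proof proposal for Corollary~\ref{lem:conditions *}.**

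The plan is to derive the one-to-one correspondence between sources and sinks directly from Lemma~\ref{lem:source-sink equal} together with the maps $f$ and $g$ constructed in its proof, and then to invoke Lemma~\ref{lem:new maximal path} to upgrade the correspondence to the isomorphism $P(a)\cong I(c)$. First I would recall the map $f\colon X\to Y$, $a\mapsto c$ where $socP(a)=S(c)$: this is well defined by (i) and (ii) of the conditions $(\ast)$, since uniseriality of $P(a)$ forces $socP(a)$ to be \emph{simple}, and (ii) forces the unique socle summand to be the simple at a sink. Symmetrically, $g\colon Y\to X$, $c\mapsto a$ where $topI(c)=S(a)$, is well defined by (i) and (iii). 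The argument in Lemma~\ref{lem:source-sink equal} already shows $f$ and $g$ are both injective (a failure of injectivity of $f$ would make some $I(c)$ non-uniserial, contradicting (i); dually for $g$), and by that lemma $|X|=|Y|$, so $f$ and $g$ are bijections.

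Next I would check that $f$ and $g$ are mutually inverse. Given a source $a$ with $f(a)=c$, we have $socP(a)=S(c)$, so there is a path $x$ from $a$ to $c$ in $Q'$, and since $a$ is a source it has no predecessors, so $x$ is maximal. By Lemma~\ref{lem:new maximal path} (with both $P(a)$ and $I(c)$ uniserial by (i)), $P(a)\cong I(c)$; in particular $topI(c)=topP(a)=S(a)$, so $g(c)=a$. Hence $g\circ f=\mathrm{id}_X$, and since both maps are bijections, $f$ is the claimed one-to-one correspondence. The path $x$ is the unique path from $a$ to $c$ because $Q'$ is a tree, and it is maximal; this gives the ``unique maximal path from each source $a$ to a unique sink $c$'' part of the statement, and $P(a)\cong I(c)$ was just shown.

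The one genuine point to be careful about — and the place I expect to spend the most words — is the claim that uniseriality of $P(a)$ really forces $socP(a)$ to be a \emph{single} simple supported at a sink, so that $f$ is a genuine function rather than a relation; this is where (i) and (ii) must be used in tandem rather than separately, exactly as in the proof of Lemma~\ref{lem:source-sink equal}. Everything else is bookkeeping: the injectivity of $f$ and $g$ is quoted verbatim from that lemma, the equality $|X|=|Y|$ is its conclusion, and the passage from ``$socP(a)=S(c)$ via a maximal path with $P(a),I(c)$ uniserial'' to ``$P(a)\cong I(c)$'' is precisely the content of Lemma~\ref{lem:new maximal path}. So the corollary is essentially a repackaging of the two preceding results, and the proof should be short.
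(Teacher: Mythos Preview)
Your proposal is correct and follows essentially the same route as the paper: invoke Lemma~\ref{lem:source-sink equal} for $|X|=|Y|$, use conditions $(\ast)$ to get $socP(a)=S(c)$ with $c$ a sink, observe the path from the source $a$ to $c$ is maximal, and apply Lemma~\ref{lem:new maximal path} to conclude $P(a)\cong I(c)$. The only difference is that you make the bijection explicit by verifying $g\circ f=\mathrm{id}_X$, whereas the paper asserts $topI(c)=S(a)$ more directly; your version is slightly more careful but not genuinely different.
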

\begin{proof}
     Suppose conditions $(\ast)$ hold. It follows from Lemma \ref{lem:source-sink equal}, that $Q'$ has
equal number of sources and sinks. 
Let $a$ be a source in $Q'$. Then we have from  the conditions $(\ast)$ that 
$socP(a)=S(c)$ and  $topI(c)=S(a)$ where $c$ is a unique sink.
This implies that the path from $a$ to $c$ is maximal, and hence $P(a)\cong I(c)$.

\end{proof}

The reverse implication is not true in general, as shown in the following
\begin{exl}
Let $Q'$ be the following tree without arms:
 \[\begin{xy}
\xymatrix {
&&4\ar[d]\\
1\ar[r]^{\alpha}&2\ar[r]^{\beta}\ar[d]&5\ar[r]^{\gamma}&6\\
&3
}
\end{xy}\]
Let $\{\beta\alpha, \gamma\beta\}$ be a set of zero relations on  $Q'$. Then sources and sinks are in one-to-one
correspondence, but it does not imply the conditions $(\ast)$. For $socP(2)=S(3)\oplus S(5)$, while the vertex 5 is not a sink.
\end{exl}

 \begin{prp}
  If a set $R'$ of  relations on $Q'$ does not satisfy the conditions $(\ast)$, then $dom.dimB'=0$.
 \end{prp}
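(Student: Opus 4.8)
The statement asserts that if a set $R'$ of relations on $Q'$ fails the conditions $(\ast)$, then $dom.dim B' = 0$, i.e.\ the injective envelope $I_1 = \bigoplus_{i} EP(i)$ of $_{B'}B'$ has a non-projective summand. The plan is to show that the negation of $(\ast)$ forces, for some vertex $a$, that $EP(a)$ contains a non-projective injective summand. Since $I_1$ decomposes as $\bigoplus EP(i)$ and $dom.dim(M\oplus N) = \min(dom.dim M, dom.dim N)$, a single such summand suffices. The natural strategy is to split into cases according to \emph{which} of the three conditions (i), (ii), (iii) of Definition \ref{dfn:*} is violated, and in each case locate the offending projective and its non-projective injective envelope summand, mimicking the arguments already used for $b(k,0)$ and $b(0,l)$ and in Lemma \ref{lem:non-projective env}.

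\emph{Case (ii) fails:} there is some $i\in Q_0'$ with $socP(i) = \bigoplus S(x)$ where some summand $S(x)$ has $x$ \emph{not} a sink. Then $EP(i)$ has $I(x)$ as a summand. I claim $I(x)$ is not projective: if it were, then $I(x)\cong P(y)$ for some $y$, forcing $x = soc I(x)$ to be a sink by Lemma \ref{lem:new maximal path} (a projective-injective $P(y)\cong I(x)$ has simple socle $S(x)$, and since $Q'$ is a tree without arms the maximal path from $y$ ends at the sink $x$) — contradiction. Hence $EP(i)$, and therefore $I_1$, has a non-projective summand. \emph{Case (iii) fails} is handled dually, working with a vertex $j$ such that $topI(j)=\bigoplus S(x)$ with some $x$ not a source; then one finds a projective $P(j')$ — where $j'$ runs over the sources mapping into $I(j)$ under injective envelopes, or more directly one uses that the non-sourcehood of $x$ obstructs $I(j)$ from being projective, hence obstructs some $EP(\cdot)$ from being projective as in Lemma \ref{lem:non-projective env}.

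\emph{Case (i) fails:} some $P(a)$ with $a$ a source, or some $I(c)$ with $c$ a sink, is not uniserial. Suppose $P(a)$ is non-uniserial with $a$ a source. Being non-uniserial over a tree algebra, $P(a)$ has $socP(a)$ with at least two simple summands, so $a$ is (an immediate predecessor path away from) a branching vertex $b(0,l)$ with $l\geq 2$ sitting below $a$, and the argument of the earlier lemma on $b(0,l)$ applies almost verbatim: $socP(a) = \bigoplus S(a_i)$ and each $I(a_i)$ fails to be projective because $topI(a_i)$ contains $S(b)$ together with contributions forcing non-simple top — so $EP(a)$ is not projective. Dually if some $I(c)$ with $c$ a sink is non-uniserial, one gets a branching $b(k,0)$ with $k\geq 2$ and $EP(b)=I(b)$ non-projective as before. (One should note that if $P(a)$ is non-uniserial but $soc P(a)$ happens still to consist only of simples at sinks, we are in fact back in the situation that some $I(c)$ fails to be uniserial or that (ii)/(iii) already fails for an intermediate vertex; the cases are not mutually exclusive and any one violation suffices.)

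\emph{Main obstacle.} The delicate point is the bookkeeping in Case (i): "not uniserial" must be converted into "socle or top has $\geq 2$ summands \emph{at the wrong kind of vertex}", and one must be careful that violating uniseriality of $P(a)$ does not secretly coincide with $(\ast)$ still holding in the other two clauses. The clean way around this is to prove the contrapositive in the form: \emph{if $dom.dim B' \geq 1$, i.e.\ every $EP(i)$ is projective, then $(\ast)$ holds}. Assuming every $EP(i)$ is projective, Lemma \ref{lem:new maximal path} tells us each injective summand of each $EP(i)$ is of the form $I(c)$ with $c$ the target of a maximal path starting at a source, forcing (after tracking socles) that socles of projectives live at sinks (giving (ii)), dually tops of injectives live at sources (giving (iii)), and the uniseriality (i) then follows because over a tree a projective-injective module has simple socle and simple top. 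Organizing the proof this way avoids enumerating violation cases and reduces everything to repeated application of Lemma \ref{lem:new maximal path} together with the tree structure.
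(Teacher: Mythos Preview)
Your overall architecture is reasonable, but the key step in Case (ii) (and, equivalently, in your contrapositive approach) is false as stated. You claim that if $x$ is not a sink then $I(x)$ cannot be projective, justifying this via Lemma~\ref{lem:new maximal path} by saying that a maximal path in a tree without arms must end at a sink. That is only true in the path algebra $KQ'$; in the bound algebra $B'=KQ'/\mci'$ a path can be maximal because a relation kills every extension, so its target need not be a sink. Concretely, take $Q'$ with vertices $1,\dots,6$, arrows $1\to 3$, $2\to 3$, $3\to 4$, $4\to 5$, $4\to 6$ (this is a tree without arms), and relations killing $1\to 3\to 4$, $3\to 4\to 5$, $3\to 4\to 6$. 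Then $P(2)\cong I(4)$ is projective-injective although $4$ is not a sink. Here condition $(\ast)$ does fail (e.g.\ $socP(1)=S(3)$ with $3$ not a sink), so the proposition is fine, but your argument applied to the pair $(i,x)=(2,4)$ breaks down. The same example kills your contrapositive version: assuming all $EP(i)$ projective does not, via Lemma~\ref{lem:new maximal path} alone, force socles to sit at sinks.

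What is missing is a careful choice of the witness. The paper's proof does exactly this: assuming (i) holds but (ii) fails, it selects \emph{leftmost} vertices $i,h$ with $S(h)\subseteq socP(i)$ and $h$ not a sink, and then exploits the armless structure of $Q'$ (forcing either $i$ or $i^{+}$ to be a branching vertex $b(1,l\geq 2)$) together with the minimality of the choice to produce a genuinely non-projective injective in some $EP(\cdot)$. Without that extremal choice and the branching argument, the step does not go through. Your Case (i) is closer to the paper's, though your reduction to the $b(0,l)$ lemma is also loose: the paper instead argues directly that if $P(a)$ is non-uniserial with $a$ a source then $EP(a)$ cannot be projective because $S(a)$, lying in the top of any injective containing it, would force that injective to be isomorphic to $P(a)$ itself.
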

\begin{proof}
 We assume that $R'$ does not satisfy the conditions $(\ast)$.
First, let $a$ be a source such that $P(a)$ is not uniserial. Then $P(a)$ is not injective. Hence
 $EP(a)$ is not projective, because $a$ is a source. This gives  $dom.dimB'=0$.
Similarly, if $I(c)$ is not uniserial for some sink $c$, then obviously $I(c)$ is not projective
and so is  $EP(c)=I(c)$. Consequently, $dom.dimB'=0$.

Now we assume that $R'$ satisfies (i) but does not satisfy (ii). Then obviously (iii) is also not satisfied.
Because (ii) does not hold, there exist two leftmost vertices, say,  $i$ and $h$ such that $socP(i)\supseteq S(h)$, where
$h$ is not a sink.
Now if there exist a source $j$ and  a path $x$ from  $j$ to $h$, then $x$ is not zero in the algebra $B'$ because 
otherwise it would contradict the fact that $h$ is leftmost. This shows that  $topI(h)$ containing $S(j)$
and $S(i)$ is not simple. Hence $EP(i)\supseteq I(h)$ is not projective, and $dom.dimB'=0$.

If no such $j$ and $x$ exist, then  $topI(h)=S(i)$ is simple. Since $Q'$ has no arms, either $i$ or $i^{+}$ is a branching 
vertex $b=b(1,l\geq2)$. If $i=b$, then $I(h)$ is not projective, since $topI(h)=S(i)$ but $P(i)$ is not uniserial.
Hence $EP(i)\supseteq I(h)$ is not projective. If $i^{+}=b$, then $i$ must be a source of 
$Q'$, since $i$ is leftmost. Thus $P(i)$ is uniserial by (i). Now there exists at least one successor $h'$ of 
$i^{+}=b$ such that $topI(h')=S(i^{+})$, whereas $P(i^{+})$ is not uniserial.
Hence $EP(i^{+})\supseteq I(h')$ is not projective, showing that  $dom.dimB'=0$.
\end{proof}

\begin{lem}
\label{lem:injective envelope}
If a set $R'$ of zero relations on  $Q'$ satisfies the conditions $(\ast)$, then $EP(a)$  is projective
for each  $a\in Q_0'$.
\end{lem}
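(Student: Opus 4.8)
The plan is to mimic the structure of the proof of Lemma \ref{lem:inj-proj env}, which handled the $\A_n$ case, but now using the conditions $(\ast)$ as a replacement for the uniseriality that came for free in $\A_n$. Fix a vertex $a\in Q_0'$. The key observation is that $EP(a)=E(\operatorname{soc}P(a))$, so by condition (ii) of $(\ast)$ we have $\operatorname{soc}P(a)=\bigoplus S(c)$ with each $c$ a sink, and hence $EP(a)=\bigoplus E(S(c))=\bigoplus I(c)$. Thus it suffices to show that $I(c)$ is projective for every sink $c$ appearing in this decomposition. So the whole lemma reduces to: \emph{for each sink $c$, $I(c)$ is projective}.

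The second step is to produce, for a given sink $c$, a source $a_c$ with $P(a_c)\cong I(c)$. Here I would invoke Corollary \ref{lem:conditions *} directly: the conditions $(\ast)$ put sources and sinks in one-to-one correspondence, and for each source $a$ there is a unique maximal path to a unique sink $c$ with $P(a)\cong I(c)$. Since the correspondence is a bijection, every sink $c$ is hit, i.e.\ $c=f(a_c)$ for a (unique) source $a_c$, and therefore $I(c)\cong P(a_c)$ is projective. Combining with the first step gives that $EP(a)=\bigoplus I(c)$ is a direct sum of projectives, hence projective, which is exactly the assertion.

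An alternative, more self-contained route (in case one does not want to lean on the corollary) would be to argue directly from Lemma \ref{lem:new maximal path}: for a sink $c$, take the longest path $z$ ending at $c$; its source, say $a'$, satisfies $\operatorname{soc}P(a')=S(c)$ by maximality of $z$, and $\operatorname{top}I(c)=S(a')$ provided $I(c)$ is uniserial and $z$ is maximal — uniseriality of $I(c)$ is exactly condition (i) of $(\ast)$, and maximality at the target end is automatic because $c$ is a sink. One still needs that $a'$ is a source, which follows because if $a'$ had a predecessor the path could be extended, contradicting maximality (using that $Q'$ has no arms so the extension stays inside a uniserial piece). Then Lemma \ref{lem:new maximal path} yields $P(a')\cong I(c)$.

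The main obstacle I anticipate is the bookkeeping in the decomposition $\operatorname{soc}P(a)=\bigoplus S(c)$: one must be careful that condition (ii) genuinely forces \emph{every} summand of the socle to be a simple at a sink (so that no non-projective injective $I(h)$ with $h$ not a sink can sneak into $EP(a)$), and that the injective envelope of a simple $S(c)$ at a sink $c$ really is the indecomposable injective $I(c)$ that the correspondence of Corollary \ref{lem:conditions *} makes projective. Both are routine once stated carefully, but they are the places where the hypothesis $(\ast)$ is actually consumed; everything else is formal manipulation of injective envelopes and the already-established source/sink bijection.
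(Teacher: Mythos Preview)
Your proposal is correct and follows essentially the same route as the paper: use condition (ii) of $(\ast)$ to write $EP(a)=\bigoplus I(c)$ with each $c$ a sink, then invoke Corollary \ref{lem:conditions *} to conclude each $I(c)$ is projective. Your added remark that the source--sink bijection guarantees \emph{every} sink is hit is a small clarification the paper leaves implicit, and your alternative route via Lemma \ref{lem:new maximal path} is unnecessary but sound.
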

\begin{proof}
 Let $a\in Q_0'$ be an arbitrary vertex.  
Then by (ii) of  the conditions $(\ast)$,
$socP(a)=\oplus S(j)$ where each $j$ is a sink. This gives
$EP(a)=\oplus I(j)$. Because each $j$ is a sink and $R'$ satisfies the conditions
$(\ast)$, it follows from Corollary \ref{lem:conditions *} that each $I(j)$ is 
projective. Hence $EP(a)$ is projective for every $a\in Q_0'$.
\end{proof}

\begin{prp}
\label{prp:dd1}
If a set $R'$ of zero relations on $Q'$ satisfies the  conditions $(\ast)$, then $dom.dimB' =1$.
\end{prp}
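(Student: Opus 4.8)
The plan is to show that, under the conditions $(\ast)$, the minimal injective resolution of $_{B'}B'$ starts with one projective-injective term $I_1$ but that the cokernel $I_1/{}_{B'}B'$ has no injective envelope that is projective, so that $dom.dim B' \geq 1$ but $dom.dim B' \not\geq 2$. The lower bound $dom.dim B' \geq 1$ is already in hand: Lemma \ref{lem:injective envelope} gives that $EP(a)$ is projective for every $a\in Q_0'$, hence $I_1 = \bigoplus_{a\in Q_0'} EP(a)$ is projective-injective, exactly as in the proof of Proposition \ref{prp:lower bound}. So the whole work is to produce a projective $P$ with $ddP \not> 1$; then $dom.dim B' = dom.dim {}_{B'}B' = \min_a ddP(a) = 1$.

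To find such a $P$, I would exploit the fact that $Q'$ is a tree without arms, so the leftmost vertices are all sources. Pick a source $a$; by Corollary \ref{lem:conditions *} there is a unique sink $c$ with $P(a)\cong I(c)$, and $P(a)$ is uniserial along the maximal path from $a$ to $c$. The candidate is $P(a^{+})$ (or, if $a^{+}$ is already a branching vertex, a carefully chosen successor). Since $a$ is a source and $Q'$ has no arms, $a^{+}$ is either a branching vertex or has a further unique successor; in either case $socP(a^{+})$ contains $S(c)$ by the uniseriality of $P(a)$, together with possibly other simple summands $S(c_i)$ with each $c_i$ a sink, by condition (ii). Thus $EP(a^{+}) = I(c)\oplus\bigoplus I(c_i)$, which is projective-injective by Lemma \ref{lem:injective envelope}, contributing $I(c)\cong P(a)$ to the first syzygy. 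The key point is then to compute the next cokernel: modding $EP(a^{+}) = P(a)\oplus(\text{proj.-inj.})$ by $P(a^{+})$, I would argue the resulting module has $I(a)$ as a summand of its injective envelope (because $topP(a)=S(a)$ is killed in the quotient, so $S(a)$ appears in the socle of the cokernel), and $I(a)$ is not projective since $a$ is a source that is the target of no arrow, so $I(a)$ cannot be isomorphic to any $P(k)$ (its top would have to be simple at a source, forcing $P(k)=P(a)$, but then $I(a)\cong P(a)$ would force $P(a)$ to have simple socle equal to $S(a)$, impossible). Hence $I_2$ in the minimal resolution of $P(a^{+})$ is not projective, giving $ddP(a^{+})=1$.

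The main obstacle I anticipate is the bookkeeping when $a^{+}$ is itself a branching vertex of type $b(1,l)$ with $l\geq 2$: then $P(a^{+})$ has several maximal paths emanating from it, $socP(a^{+}) = \bigoplus S(c_i)$ over several sinks $c_i$, and one must verify that \emph{all} the summands $I(c_i)$ of $EP(a^{+})$ are projective (this is where Corollary \ref{lem:conditions *} and Lemma \ref{lem:injective envelope} do the heavy lifting) and, more delicately, that the cokernel of $P(a^{+})\hookrightarrow EP(a^{+})$ still has a non-projective injective envelope. The cleanest way around this is to iterate: if $ddP(a^{+})$ turns out to be larger than $1$, then walking one more step along a uniserial stretch toward a branching vertex and repeating the argument must terminate, because $Q'$ is finite and $B'$ is a directed algebra (so Theorem \ref{thm:lub} bounds the dominant dimension); at the last projective before the resolution becomes non-projective, Lemma \ref{lem:minimal} forces the next term to be non-injective as needed. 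Combining $dom.dim B' \geq 1$ with the existence of this projective of dominant dimension exactly $1$ yields $dom.dim B' = 1$.
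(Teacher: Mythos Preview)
Your lower bound argument ($dom.dim B' \geq 1$) matches the paper. However, your upper bound argument contains a genuine error in the cokernel computation. You claim that in the quotient $EP(a^{+})/P(a^{+})$ the simple $S(a) = topP(a)$ ``is killed'' and therefore appears in the socle of the cokernel. This is backwards: the image of $P(a^{+})$ in the summand $P(a) \cong I(c)$ of $EP(a^{+})$ lies inside $radP(a)$ (since $a^{+}$ is a proper successor of $a$), so $S(a)$ survives in the cokernel---but as a \emph{top} composition factor, not in the socle. Concretely, in the armless tree with arrows $1\to 3$, $2\to 3$, $3\to 4$, $3\to 5$ and relations killing the paths $1\to 3\to 5$ and $2\to 3\to 4$, take $a=1$. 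Then $EP(3) = I(4)\oplus I(5)$ and the cokernel of $P(3)\hookrightarrow EP(3)$ is $I(3)$, whose socle is $S(3)$, not $S(1)$. Your fallback (``walking one more step along a uniserial stretch'') does not repair this, because in $Q'$ the vertex $a^{+}$ is \emph{always} already a branching vertex (otherwise the left arm through $a$ would be non-trivial), so there is no uniserial stretch available.

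The paper sidesteps the difficulty entirely by working at the other end: it picks a \emph{sink} $c$ rather than a source. Then $P(c)=S(c)$ is simple and $EP(c)=I(c)$ is uniserial by condition~(i) of $(\ast)$, so the cokernel $I(c)/S(c)$ has simple socle $S(c^{-})$. Since $Q'$ has no non-trivial right arms, $c^{-}$ is necessarily a branching vertex; by condition~(ii) no indecomposable projective has socle $S(c^{-})$ (socles of projectives are supported only at sinks), hence $I(c^{-})$ is not projective. This yields the short resolution $0\to P(c)\to I(c)\to I(c^{-})\to 0$ and $ddP(c)=1$ with no branching bookkeeping at all.
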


\begin{proof}
 By Lemma \ref{lem:source-sink equal}, $Q'$ has equal number of sources and sinks. Since $R'$ satisfies the conditions $(\ast)$, it follows immediately from
Lemma \ref{lem:injective envelope} that the injective envelope $EP(a)$ of $P(a)$ is projective
 for each vertex $a \in Q_0'$. This implies that $dom.dimB'\geq1$.

Next we show that $dom.dimB'=1$. Let $c\in Q_0'$ be a sink. Then there exists a unique maximal path from the corresponding source $a$ to $c$ such that
\[P(a)\cong I(c)=\begin{matrix}
a\\a^{+}\\\vdots\\c^{-}\\c
\end{matrix}\]
where $ a^{+}$ is the immediate successor of $a$ and $c^{-}$ is the immediate predecessor of $c$. In fact $c^{-}$
 is a branching vertex $b$, since $Q'$ has no arms. The minimal injective resolution of $P(c)$ becomes
	\[
0\rightarrow
\begin{matrix}
c
\end{matrix}\rightarrow
\begin{matrix}
a\\
a^{+}\\
\vdots\\
b\\
c
\end{matrix} \rightarrow
\begin{matrix}\\
a\\
a^{+}\\
\vdots\\
b
\end{matrix}
\rightarrow 0  \]
or \[0\rightarrow P(c)\rightarrow I(c)\rightarrow I(b)\rightarrow 0\]
where $ I(b)$ is not projective. This proves that $dom.dimB'=1$.
\end{proof}

We summarize this section as 
\begin{thm}
\label{thm:without arms}
Let $R'$ be a set of zero relations on $Q'$. Then
    
      \[
       dom.dimB' =
    \begin{cases}
      1 &\text{ if } R'\text{ satisfies the conditions } (\ast)\\
       0 &\text{ otherwise.}
       \end{cases}
    \]
\end{thm}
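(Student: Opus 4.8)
The plan is to establish Theorem~\ref{thm:without arms} by assembling the three preceding results into a single dichotomy, so the proof is mostly bookkeeping rather than new work. First I would observe that the two cases in the statement are mutually exclusive and exhaustive by definition: either the set $R'$ of zero relations satisfies the conditions $(\ast)$ or it does not. This partition is the backbone of the argument, and nothing about it requires proof---it is simply a case split on a well-defined property of $R'$.

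For the first case, suppose $R'$ satisfies the conditions $(\ast)$. Then I would invoke Proposition~\ref{prp:dd1} directly: it asserts precisely that $dom.dimB' = 1$ under exactly this hypothesis. So this branch is a one-line citation. For the second case, suppose $R'$ does not satisfy the conditions $(\ast)$. Here I would cite the Proposition (the one immediately following Corollary~\ref{lem:conditions *} and the subsequent example) which states that if $R'$ fails the conditions $(\ast)$ then $dom.dimB' = 0$. Again this is a direct appeal to an already-established result. Combining the two cases gives exactly the piecewise formula in the statement.

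Since both halves are immediate consequences of results proved earlier in the section, I do not expect any genuine obstacle here; the only thing to be careful about is making sure the earlier propositions really do cover the hypotheses in full generality---in particular that the ``otherwise'' branch of the theorem is genuinely handled by the earlier $dom.dimB' = 0$ proposition for every way in which $(\ast)$ can fail (non-uniserial $P(a)$ for a source $a$, non-uniserial $I(c)$ for a sink $c$, or failure of condition (ii)/(iii) with (i) holding). That proposition's proof does exhaust these subcases, so the summary theorem follows. I would phrase the proof as: ``If $R'$ satisfies the conditions $(\ast)$, then $dom.dimB' = 1$ by Proposition~\ref{prp:dd1}. Otherwise, $dom.dimB' = 0$ by [the preceding proposition]. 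This proves the theorem.''
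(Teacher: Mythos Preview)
Your proposal is correct and matches the paper's treatment exactly: the paper presents Theorem~\ref{thm:without arms} explicitly as a summary of the section, with no separate proof, since it is the immediate conjunction of Proposition~\ref{prp:dd1} (the $(\ast)$ case) and the earlier proposition showing $dom.dimB'=0$ when $(\ast)$ fails. Your one-line citation of each direction is precisely what is intended.
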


\subsection{Trees with arms}
\label{sec:with arms}
In this subsection, a bound quiver algebra $KQ/\mci$ of a tree $Q$ with arms  will be denoted simply by $B$, where
$\mci$ is an admissible ideal of  $KQ$ generated by a set $R$ of zero relations on $Q$.

In general,  $dom.dimB $ is not equal to $dom.dimB'$, but we have the following

\begin{prp}
\label{prp:ddB=0} 
Let $R$ and $R'$ be sets of zero relations on $Q$ and $Q'$  respectively.

\begin{itemize}
 \item[(a)] If $R=R'$, then $dom.dimB = dom.dimB'$.
\item[(b)]  If $R'\subsetneq R$ and $R\cap S'=R'$, then $dom.dimB' = 0$ implies $dom.dimB= 0$,
where  $S'$ is the set of all possible zero relations on $Q'$.
\end{itemize}
\end{prp}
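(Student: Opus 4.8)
The plan is to treat the two parts separately, exploiting the fact that $Q$ is obtained from $Q'$ by attaching linearly oriented arms at the sources and sinks of $Q'$. For part (a), the essential observation is that if $R=R'$ then no relation of the algebra involves an arrow lying on an arm, so every arm is a ``free'' linearly oriented segment. Concretely, I would argue that for a vertex $a$ on a left arm the projective $P_B(a)$ is uniserial and its socle equals $\operatorname{soc}P_B(a^{+})$ once one reaches the branching vertex; iterating, $\operatorname{soc}P_B(a)$ agrees (as a sum of simples at sinks of $Q'$) with $\operatorname{soc}$ of the corresponding $Q'$-projective, and dually for injectives attached along right arms. Hence the projective-injective $B$-modules are exactly the $B'$-modules $I(c)$ pulled back along the arms (extended uniserially to the endpoints of the arms), and the minimal injective resolution of ${}_BB$ is obtained from that of ${}_{B'}B'$ by this same uniserial extension. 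Since dominant dimension only counts how many consecutive terms are projective-injective, and this count is unchanged by the extension, $dom.dimB=dom.dimB'$.

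For part (b), suppose $dom.dimB'=0$. By Theorem \ref{thm:without arms} this means $R'$ fails the conditions $(\ast)$, so one of the three failure modes in the proof of the Proposition preceding Lemma \ref{lem:injective envelope} occurs: either $P_{B'}(a)$ is not uniserial for some source $a$ of $Q'$, or $I_{B'}(c)$ is not uniserial for some sink $c$ of $Q'$, or (i) holds but (ii) fails at some leftmost pair $i,h$. In each case the witnessing module produces a non-projective injective envelope. The key point is that passing from $B'$ to $B$ — adding the arms, and possibly enlarging the relation set within $R\setminus S'$, i.e.\ only by relations that touch an arm — does not repair any of these failures, because the relevant vertices $a,c,i,h$ lie in $Q_0'$ and, since $R\cap S'=R'$, the restriction of the $B$-structure to paths inside $Q'$ is exactly the $B'$-structure. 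Thus $P_B(a)$ is still non-uniserial (it contains $P_{B'}(a)$ as the submodule spanned by $Q'$-paths, and the extra arm only adds a uniserial tail below a source, which cannot make a non-uniserial module uniserial), and similarly for the other two cases $I_B(c)$ or the relevant $I_B(h)$ remains non-projective with $EP_B(\cdot)$ containing it. Therefore $I_1$ in the minimal injective resolution of ${}_BB$ has a non-projective summand and $dom.dimB=0$.

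The main obstacle I anticipate is part (b), case (iii)-failure: one must check carefully that when (ii) fails at a leftmost pair $i,h$ in $Q'$, attaching an arm to $Q$ does not create a new maximal path into $h$ that would make $topI_B(h)$ behave differently, nor turn $I_B(h)$ projective. The condition $R\cap S'=R'$ is exactly what rules this out — any relation of $B$ with source or target in $Q_0''$ already lies in $R'$ — so the ``leftmost'' analysis from the trees-without-arms case transports verbatim, with the caveat that the branching vertex adjacent to $i$ now may also sit at the foot of an arm; one handles this by noting an arm attached at a source $i$ of $Q'$ only lengthens $P_B(i)$ uniserially upward and does not affect $\operatorname{soc}$, so the argument that $EP_B(i)\supseteq I_B(h)$ is not projective goes through unchanged. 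Part (a) is essentially bookkeeping once the uniserial-extension picture is set up.
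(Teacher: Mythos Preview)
Your treatment of part (b) matches the paper's: both invoke Theorem~\ref{thm:without arms} to conclude that $R'$ fails $(\ast)$, locate a vertex $i\in Q_0'$ with $EP_{B'}(i)$ non-projective, and then use $R\cap S'=R'$ to argue the same failure persists for $B$. Your case-by-case breakdown is more detailed than the paper's one-line transfer, but the logic is the same.

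For part (a) the approaches genuinely differ. The paper does not attempt to lift injective resolutions: it simply uses the dichotomy $dom.dimB'\in\{0,1\}$ from Theorem~\ref{thm:without arms} and checks each value transfers to $B$ (the $0$ case as in (b); the $1$ case via Lemma~\ref{lem:injective envelope} applied to $R=R'$). Your structural route---arguing that the minimal injective resolution of ${}_BB$ is the uniserial extension of that of ${}_{B'}B'$---is more ambitious but, as stated, not quite right. The algebra $B$ has indecomposable projectives $P_B(i)$ at arm vertices $i\notin Q_0'$, and their minimal injective resolutions are \emph{not} obtained by extending a $B'$-resolution: for instance, if $i$ lies on a left arm just below the source $a$ of $Q$, then $P_B(i)\hookrightarrow I_B(\text{sink})$ has cokernel with socle $S(i^-)$, and the next term $I_B(i^-)$ (possibly $I_B(a)=S(a)$) is supported entirely on the arm and has no $B'$-analogue. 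So the claim that ``the count of projective-injective terms is unchanged by the extension'' needs a separate argument for these new projectives, not just bookkeeping. The paper's route sidesteps this entirely by exploiting the already-proved bound $dom.dimB'\le 1$; your approach would work with more care, but it is not the shortcut you suggest.
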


\begin{proof}
 (a)  First we assume that  $dom.dimB' = 0$. Then clearly $R'$ does not satisfy the conditions $(\ast)$.
Hence there exists at least one $i\in Q_0'$ such that the injective envelope $EP(i)$ of the
$B'$-module $P'(i)$ is not projective. Since $R=R'$ and $Q_0'\subseteq Q_0$, it follows that the injective envelope $EP(i)$ 
of the $B$-module $P(i)$ is not projective where $i\in Q_0$. Hence  $dom.dimB = 0 $.

Now assume that $dom.dimB' = 1$. Then by Theorem \ref{thm:without arms}, $R'$ satisfies the conditions $(\ast)$.
 Since $R=R'$, so $R$ also satisfies the condition $(\ast)$. It follows from the Lemma \ref{lem:injective envelope} that 
the injective envelope $EP(i)$ of a $B'$-module $P(i)$ is projective for each $i\in Q_0'$. This implies that 
the injective envelope $EP(i)$ of a $B$-module $P(i)$ is projective for each $i\in Q_0$, because $R=R'$ and $Q_0'\subseteq Q_0$. 
Consequently, $dom.dimB = 1$. 

(b)  We suppose that $dom.dim B' = 0$. 
Then it follows from Theorem \ref{thm:without arms} that  $R'$ does not satisfy the conditions $(\ast)$.
Hence there exists at least one $i\in Q_0'$ such that the injective envelope $EP(i)$ of the $B'$-module $P(i)$ is not projective.
Now $R'\subsetneq R$ and $R\cap S'=R'$ imply that  $R$ does not satisfy the conditions $(\ast)$ on the subquiver $Q'$ of $Q$.
This implies that the injective envelope $EP(i)$ of the $B$-module $P(i)$ is not projective, where $i\in Q_0$ because $Q_0'\subseteq Q_0$,
 and hence $dom.dim B = 0$. 
\end{proof}

 The reverse implication in  part (b) of Proposition \ref{prp:ddB=0} is not true in general. For we have the following
\begin{exl}
\label{exl:counter1}
 Let $Q$ be the following tree with arms:
 \[\begin{xy}
\xymatrix {
&2\ar[d]^{\alpha}\\
1\ar[r]^{\delta}&3\ar[r]^{\beta}\ar[d]^{\theta}&5\ar[r]^{\gamma}&6\\
&4
}
\end{xy}\]
Let $R'=\{\theta\alpha,\beta\delta\}$ and $R =\{\theta\alpha,\beta\delta,\gamma\beta\alpha\}$
be two sets of zero relations. Then $dom.dim B = 0$, since $EP(3)$ contains a non-projective summand $I(6)$,
but $R'$ satisfies the conditions $(\ast)$ on $Q'$, and thus $dom.dimB' = 1$.

This example also shows that if $dom.dim B' = 1$ and $R'\subsetneq R$, then $dom.dim B$ is not necessarily equal to one.
\end{exl}

\begin{lem}
\label{lem:uniserial}
Let $R$ and $R'$ be sets of zero relations on $Q$ and $Q'$ respectively, 
such that $R'$ satisfies the conditions $(\ast)$. If $R'\subseteq R$, then
  $B$-modules $P(i)$ and $I(i)$ are uniserial for each $i\in Q_0\smallsetminus Q_0''$.
\end{lem}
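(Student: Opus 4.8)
The plan is to prove the statement about projectives and then obtain the statement about injectives by duality. First I would reduce to showing that $P(i)$ is uniserial for every $i\in Q_0\smallsetminus Q_0''$. Applying this to the opposite quiver $Q^{\mathrm{op}}$ finishes the proof: the tree without arms of $Q^{\mathrm{op}}$ is $(Q')^{\mathrm{op}}$, its interior vertex set is again $Q_0''$, the reversed relation sets satisfy $R'^{\mathrm{op}}\subseteq R^{\mathrm{op}}$, and $R'^{\mathrm{op}}$ still satisfies the conditions $(\ast)$ because clauses (ii) and (iii) of Definition \ref{dfn:*} simply interchange under reversal of the arrows; hence each $I(i)$ is the linear dual of the projective $B^{\mathrm{op}}$-module $P_{B^{\mathrm{op}}}(i)$ and so is uniserial whenever the latter is. The second preliminary point is purely combinatorial: every vertex of $Q_0\smallsetminus Q_0''$ lies on a left arm or on a right arm of $Q$, where I count the attachment vertex of an arm --- the source, respectively sink, of $Q'$ carrying it --- as lying on that arm, and the vertices deleted in forming $Q'$ from $Q$ are precisely the non-attachment vertices of arms. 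Since an arm is a linearly oriented pendant subquiver meeting the rest of $Q$ only at the branching vertex to which it attaches, a vertex lying on an arm is never a branching vertex of $Q$.

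The easy part of the $P$-statement concerns a vertex $i$ lying on a right arm (with the sink of $Q'$ attached to it included). Since $i$ is not a branching vertex, the only arrow out of $i$ points one step further along the arm towards its terminal sink, so the set of vertices reachable from $i$ in $Q$ forms a single oriented path; hence the basis of $P(i)$ --- the nonzero paths issuing from $i$ --- is a prefix-closed family of initial segments of that path, and $P(i)$ is uniserial.

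The main part is a vertex $i$ on a left arm, and its heart is the case where $i=a$ is the attachment vertex, i.e. a source of $Q'$. By Corollary \ref{lem:conditions *}, applied inside $B'$, there is a unique maximal path $x$ of $Q'$ from $a$ to a uniquely determined sink $c$ of $Q'$ with $P'(a)\cong I'(c)$; moreover $P'(a)$ is uniserial by clause (i) of $(\ast)$ and has socle $S(c)$, so the nonzero paths from $a$ in $B'$ are exactly the prefixes of $x$. Because $R'\subseteq R$, anything zero in $B'$ stays zero in $B$, so in $B$ every nonzero path from $a$ that remains inside $Q'$ is still a prefix of $x$. If a nonzero path $p$ from $a$ in $B$ leaves $Q'$ it must enter a right arm; letting $c_1$ be the last vertex of $p$ inside $Q'$ --- the attachment vertex of that arm, hence a sink of $Q'$ --- the portion of $p$ from $a$ to $c_1$ lies in $Q'$, is nonzero in $B$, and is therefore a prefix of $x$; but a prefix of $x$ terminating at a sink of $Q'$ must be all of $x$, so $c_1=c$ and $p$ is a prefix of the single path $\bar x$ obtained by prolonging $x$ along the right arm carried by $c$. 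Thus every nonzero path from $a$ in $B$ is a prefix of $\bar x$, and $P(a)$ is uniserial. For a general left-arm vertex $i$ with attachment vertex $a$, every nonzero path from $i$ runs first along the linearly oriented arm to $a$ and then continues as a nonzero path from $a$, hence is a prefix of the fixed path consisting of the arm-segment from $i$ to $a$ followed by $\bar x$; so $P(i)$ is uniserial as well, and together with the reduction of the first paragraph the lemma follows. The step I expect to be the only genuinely delicate one is this last argument for a source of $Q'$ --- specifically, excluding a sideways escape of a path from $a$ into a right arm attached at an intermediate branching vertex of $x$ --- which is exactly the place where the uniseriality of $P'(a)$, i.e. the conditions $(\ast)$, together with the inclusion $R'\subseteq R$, are both needed.
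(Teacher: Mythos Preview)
Your proof is correct and follows essentially the same case split as the paper: vertices on right arms give uniserial projectives trivially because the forward quiver is linear, and vertices on left arms require the conditions $(\ast)$ together with $R'\subseteq R$. The paper simply asserts that for $i$ in a left arm ``$socP(i)$ is necessarily simple $S(j)$ \ldots\ because $R'$ satisfies the conditions $(\ast)$ and $R'\subseteq R$'' and handles $I(i)$ symmetrically, whereas you actually supply the argument behind that assertion (nonzero paths from a source $a$ of $Q'$ must be prefixes of the unique maximal $B'$-path $x$, and any escape into a right arm can only occur at the terminal sink $c$ of $x$) and package the injective case via duality on $Q^{\mathrm{op}}$ rather than repeating it. So the route is the same; your version is just more explicit at the one nontrivial step.
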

\begin{proof}
  We assume that $R'\subseteq R$. Let $i$ be arbitrary in $Q_0\smallsetminus Q_0''$. There are two cases: 
either $i$ is contained in a left arm or it belongs to some right arm.
  First, we suppose that $i$ is contained in a left arm of $Q$. Then it is trivial to see that $I(i)$ is uniserial,
 because arms of $Q$, by definition, are linearly oriented. Also $socP(i)$ is necessarily simple $S(j)$ 
  for some successor $j\in Q_0$ of $i$, because $R'$ satisfies the conditions $(\ast)$ and $R'\subsetneq R$. 
Hence $P(i)$ is uniserial. 

Next, suppose that $i$ belongs to some right arm of $Q$. Then trivially 
  $P(i)$ is uniserial, because arms are linearly oriented. Again by the same argument that $R'$ satisfies the
 conditions $(\ast)$ and $R'\subsetneq R$, it follows that $topI(i)$ is simple $S(h)$ for some predecessor
  $h\in Q_0$ of $i$, and thus $I(i)$ is uniserial. Hence both projective and 
 injective $B$-modules $P(i)$ and $I(i)$ are uniserial for each $i\in Q_0\smallsetminus Q_0''$.
\end{proof}

 We note that the assumptions in Lemma \ref{lem:uniserial} are not sufficient for the indecomposable projective 
(injective) $B$-module $P(a)$  ($I(c)$)  to be injective (projective) for each source
  $a$ (sink $c$) in $Q$, as shown in the following
 
\begin{exl}
\label{exl:counter2}
Let $Q$ be the following tree with arms:
\[\begin{xy}
\xymatrix {
1\ar[rd]^{\alpha_1}&&&&&&8\\
&2\ar[rd]^{\alpha_2}&&&&7\ar[ru]^{\alpha_7}\\
&&4\ar[r]^{\alpha_4}&5\ar[r]^{\alpha_5}&6\ar[ru]^{\alpha_6} \ar[rd]^{\beta_6}\\
&3\ar[ru]^{\alpha_3}&&&&9
}
\end{xy}\]
Let 
\[
\begin{array}{ccl}
R'& =& \{\beta_6\alpha_5\alpha_4\alpha_2,\alpha_6\alpha_5\alpha_4\alpha_3\}\\
R    & = & \{\beta_6\alpha_5\alpha_4\alpha_2,\alpha_6\alpha_5\alpha_4\alpha_3,\alpha_5\alpha_4\alpha_2\alpha_1,\alpha_7\alpha_6\alpha_5\alpha_4\alpha_2\}
\end{array}
\]
be two sets of zero relations. Then clearly $R'$ satisfies the conditions $(\ast)$ 
and $R'\subset R$. We see that the $B$-modules
 $P(i)$ and $I(i)$ are uniserial for each $i\in Q_0\smallsetminus Q_0''=\{1,2,3,7,8,9\}$.
But, in particular, for source 1 and sink 8,
\[P(1)=\begin{matrix}
1\\
2\\
4\\
5
\end{matrix}
\quad
\text{ and }
\quad
I(8)=\begin{matrix}
4\\
5\\
6\\
7\\
8
\end{matrix}\]
are  not projective-injective, where $4,5\in Q_0''$. We also note that the path from 1 to 5 is maximal, but $P(1)\ncong I(5)$, because 
\[I(5)=\begin{matrix}
\quad\quad\quad1\\
3\quad\quad2\\
\diagdown\diagup\\
4\\
5
\end{matrix}\]
is not uniserial.
\end{exl}

We tackle this problem by defining on trees with arms the following natural analogue of the conditions $(\ast)$.
\begin{dfn}
\label{dfn:**}
 Let $R$ and $R'$ be sets of zero relations on $Q$ and $Q'$, respectively, 
such that $R'\subseteq R$. Then $R$ is said to satisfy the conditions $(\ast\ast)$  if
 \begin{itr}
\item $R'$ satisfies the conditions $(\ast)$.
\item $\forall $ $i$ in left arm, $socP(i)=S(i')$ for some successor $i'\notin Q_0''$ of $i$.
\item  $\forall $ $j$ in right arm, $topI(j)=S(j')$ for some predecessor $j'\notin Q_0''$ of $j$.
\end{itr}
\end{dfn}

\begin{rem}
\label{rem:**}
 It is important to mention that when $R$ satisfies (i) the conditions $(\ast\ast)$, then 
(ii) implies  (iii), and vice versa. For if $i$ in a left arm is such that $socP(i)=S(j)$
where $j$ belongs to some right arm. Then $topI(j)$ is necessarily simple $S(h)$ with
$h$ either $i$ or some of its predecessors, since otherwise it would contradict the
conditions $(\ast)$. Similarly, (iii) implies  (ii) can be justified.
\end{rem}

 \begin{lem}
\label{lem:equal ss in Q}
    If a set $R$ of zero relations on $Q$  satisfies (i) of the conditions $(\ast\ast)$, 
then $Q$ has equal number of sources and sinks.
\end{lem}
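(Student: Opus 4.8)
The plan is to reduce the statement to Lemma \ref{lem:source-sink equal}, which already handles the armless tree $Q'$, and then argue that adjoining arms does not change the balance of sources and sinks. First I would recall that, by construction of $Q'$ from $Q$ (dropping each non-trivial left arm down to its topmost non-source vertex, and each non-trivial right arm down to its bottommost non-sink vertex, while keeping trivial arms untouched), the quiver $Q'$ has exactly the same number of sources and the same number of sinks as $Q$: collapsing a linearly oriented left arm replaces one source of $Q$ by one source of $Q'$ (namely the immediate predecessor of the branching vertex becomes the new source), and it creates no new sinks; dually for right arms. This is exactly the observation recorded after the definition of $Q_0''$, namely that $Q$ and $Q'$ always have the same number of sources and sinks.

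Next I would invoke hypothesis (i) of the conditions $(\ast\ast)$: by Definition \ref{dfn:**}, it says precisely that the restriction $R'$ of $R$ to $Q'$ satisfies the conditions $(\ast)$. Now apply Lemma \ref{lem:source-sink equal} to $R'$ on $Q'$: since $R'$ satisfies $(\ast)$, the tree $Q'$ has an equal number of sources and sinks. Combining this with the previous paragraph, $Q$ also has an equal number of sources and sinks, which is the assertion of the lemma.

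The only real content to check carefully is the claim that passing from $Q$ to $Q'$ preserves the counts of sources and of sinks; everything else is a direct citation. So the main (and essentially only) obstacle is a careful case analysis of what an arm contributes. A non-trivial left arm is a linearly oriented $\A_k$ with $k\geq 2$ running from a source of $Q$ to an immediate predecessor of a branching vertex $b$; when this arm is contracted to form $Q'$, the endpoint adjacent to $b$ becomes a source of $Q'$ and no vertex along the arm is a sink of $Q$ (each interior arm vertex has exactly one incoming and one outgoing arrow), so the number of sinks is untouched and the number of sources is unchanged (one source is traded for another). The symmetric statement holds for non-trivial right arms, and trivial arms are unaffected because they already lie in $Q'$. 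Hence the source/sink counts of $Q$ and $Q'$ agree, and the lemma follows.
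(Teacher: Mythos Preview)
Your proof is correct and follows exactly the same route as the paper: invoke Lemma~\ref{lem:source-sink equal} on $Q'$ via condition~(i) of $(\ast\ast)$, and then use the standing observation that $Q$ and $Q'$ always have the same number of sources and sinks. Your extra paragraph verifying that arm-contraction preserves the source and sink counts is just a spelled-out justification of that observation, not a different argument.
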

\begin{proof}
 If $R$ satisfies (i) of the conditions $(\ast\ast)$, 
then it follows from Lemma \ref{lem:source-sink equal} that $Q'$ has equal number of sources and sinks.
Since both $Q$ and $Q'$ always have the same number of sources and sinks, so the Lemma follows.
\end{proof}

\begin{prp}
 Let $R$ and $R'$ be sets of zero relations on $Q$ and $Q'$ respectively, 
such that $R'\subseteq R$ and $R\cap S'=R'$. If $R$ does not satisfy the conditions $(\ast\ast)$,
then $dom.dimB =0$.
\end{prp}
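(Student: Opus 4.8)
The statement asserts that if $R$ fails to satisfy the conditions $(\ast\ast)$, then $dom.dimB = 0$, i.e.\ the injective envelope $I_1 = \bigoplus_{i\in Q_0} EP(i)$ of $_BB$ contains a non-projective summand. The natural approach is to do a case analysis on \emph{how} $R$ fails $(\ast\ast)$, mirroring the proof already given for trees without arms. First I would split into the case where $R'$ (the induced set of relations on $Q'$, which equals $R\cap S'$) already fails $(\ast)$, and the case where $R'$ satisfies $(\ast)$ but one of clauses (ii) or (iii) of $(\ast\ast)$ fails.

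In the first case, $dom.dimB' = 0$ by Theorem~\ref{thm:without arms}, so there is some $i\in Q_0'$ with $EP'(i)$ (the $B'$-injective envelope of the $B'$-module $P'(i)$) not projective. Here I would invoke part (b) of Proposition~\ref{prp:ddB=0}: the hypotheses $R'\subsetneq R$ (or $R'=R$, which is part (a)) together with $R\cap S'=R'$ are exactly what that proposition needs, and it directly gives $dom.dimB = 0$. This dispatches the first case cleanly with no new work.

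In the second case, $R'$ satisfies $(\ast)$, so by Lemma~\ref{lem:uniserial} all the $P(i)$, $I(i)$ for $i\in Q_0\smallsetminus Q_0''$ are uniserial, and by Corollary~\ref{lem:conditions *} each sink $c$ of $Q'$ has $P'(a)\cong I'(c)$ for its matching source $a$. Suppose clause (ii) of $(\ast\ast)$ fails: there is a vertex $i$ in a left arm with $socP(i)$ \emph{not} equal to $S(i')$ for any successor $i'\notin Q_0''$. Since $i$ lies on a linearly oriented arm, $socP(i)$ is simple, say $S(h)$; the failure means $h\in Q_0''$ (the interior of $Q'$). The key step is then to show $I(h)$ is not projective. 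Since $h\in Q_0''$, $h$ is neither a source nor a sink of $Q$; as $i$ (an arm vertex, hence a genuine predecessor of a branching vertex) maps maximally onto $h$ one sees $topI(h)\supseteq S(i)$, but — because $h$ is an interior vertex of the branching part of the tree — $h$ has another predecessor path that is nonzero in $B$, forcing $topI(h)$ to have a second simple summand, or else $P(h)$ fails to be uniserial, so in either event $I(h)\not\cong P(a)$ for any source $a$; hence $I(h)$ is not projective. Then $EP(i)$ contains $I(h)$ (since $socP(i)=S(h)$) as a non-projective summand, giving $dom.dimB=0$. The case where clause (iii) fails is dual, using right arms, $topI(j)=S(h)$ with $h\in Q_0''$, and showing $I(h)$ — appearing in $EP$ of the relevant vertex — is not projective. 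Actually one should also handle the degenerate sub-case where the ``second predecessor path'' does not exist: then as in the no-arms proof one argues via the branching vertex $b=i^+$ (or $b=j^-$), showing $P(b)$ (resp.\ $I(b)$) is not uniserial and producing a non-projective $I(h')$ in $EP(b)$.

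\textbf{Main obstacle.} The delicate point is the second case, clause (ii)/(iii) failure: one must rule out that $I(h)$ (for $h$ an interior vertex $h\in Q_0''$ that appears as $socP(i)$ for an arm vertex $i$) could still be projective-injective. This requires carefully using that $h$ is a non-source interior vertex of $Q'$, that $Q$ is a tree so socles and tops are controlled by maximal paths, and that the relations on the $Q'$-part already satisfy $(\ast)$ (so the ``expected'' projective-injective pairing on $Q'$ is rigid and $i$, being an \emph{arm} vertex foreign to $Q'$, cannot be the top of $I(h)$ in the way a source of $Q'$ would be). Distinguishing whether $h$ has an incoming path from a source of $Q$ other than through $i$ — exactly the $\exists j, x$ versus ``no such $j,x$'' dichotomy of the no-arms proposition — is what drives the argument, and getting that case split exactly right, including the branching-vertex degenerate sub-case, is the part that needs care; the rest is a routine appeal to the injective-resolution definition of $dom.dim$.
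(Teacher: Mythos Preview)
Your overall plan matches the paper's proof exactly: split on whether (i) of $(\ast\ast)$ fails (then invoke Theorem~\ref{thm:without arms} and Proposition~\ref{prp:ddB=0}, just as you say) or (i) holds but (ii)/(iii) fails, and in the latter case locate a vertex $i$ in an arm whose injective envelope is not projective.

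Where you diverge is in the execution of the second case. The paper dispatches it in one line: given $socP(i)=S(j)$ with $j\in Q_0''$, it simply asserts that because $j\in Q_0''$ and $R'$ satisfies $(\ast)$, the $B$-module $I(j)$ is not uniserial, hence not projective, so $EP(i)=I(j)$ witnesses $dom.dimB=0$. It also invokes Remark~\ref{rem:**} to note that, given (i), conditions (ii) and (iii) fail together, so only the failure of (ii) needs to be treated --- no separate dual argument for (iii). Your elaborate case analysis (second predecessor paths, degenerate branching sub-cases mimicking the no-arms proof) is therefore more than the paper does; your instinct that this step ``needs care'' is reasonable, but the paper takes the short route and leaves the non-uniseriality of $I(j)$ as a consequence of $(\ast)$ on $Q'$.

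Two small slips in your sketch worth noting: from $socP(i)=S(h)$ you only get $S(i')\subseteq topI(h)$ for some predecessor $i'$ of $i$ (or $i'=i$), not necessarily $S(i)$ itself; and the implication ``$I(h)\not\cong P(a)$ for any source $a$, hence $I(h)$ is not projective'' is incomplete, since $I(h)$ could in principle be isomorphic to $P(k)$ for some non-source $k$ in an arm. The paper sidesteps both issues by going straight to non-uniseriality of $I(j)$.
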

\begin{proof}
We assume that $R$ does not satisfy the conditions $(\ast\ast)$.
Suppose $R$ does not satisfy (i) of the conditions $(\ast\ast)$, that is, $R'$ does not satisfy the condition $(\ast)$. 
Then it follows from Theorem \ref{thm:without arms}  that $dom.dimB'=0$.
Since $R\cap S'=R'$, it follows immediately from Proposition \ref{prp:ddB=0} that $dom.dimB=0$.

Now we assume that (i) holds but $R$ does not satisfy (ii) of the conditions $(\ast\ast)$. 
Then, by Remark \ref{rem:**}, (iii) is also not satisfied by $R$. Since (i) holds, it follows from Lemma \ref{lem:equal ss in Q}
that  $Q$ has equal number of sources and sinks.
Let an $i$ in a left arm be such that $socP(i)=S(j)$ for some $j\in Q_0''$. Because  $j\in Q_0''$ and (i) holds, so $I(j)$ 
is not uniserial, and thus it can not be projective. Hence $EP(i)=I(j)$ is not projective, and ultimately we get $dom.dimB=0$.
\end{proof}

\begin{lem}
\label{lem:proj-inj}
Let $R$ and $R'$ be sets of zero relations on $Q$ and $Q'$ respectively, 
such that $R'\subseteq R$. If $R$ satisfies the conditions $(\ast\ast)$,
 then for each source $a$ and sink $c$ in $Q$, $B$-modules $P(a)$ and $I(c)$ are projective-injective.
\end{lem}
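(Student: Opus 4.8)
\textbf{Proof proposal for Lemma \ref{lem:proj-inj}.}

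The plan is to show that each source $a$ is the source of a \emph{maximal} path ending at a sink $c$, with both $P(a)$ and $I(c)$ uniserial; then Lemma \ref{lem:new maximal path} delivers $P(a)\cong I(c)$, so $P(a)$ is projective-injective, and symmetrically $I(c)$ is projective-injective. So the whole argument reduces to locating, for each source, such a maximal path. First I would fix a source $a$. If $a$ lies outside $Q_0''$ (equivalently, $a$ is the starting vertex of a left arm, possibly trivial), then by Lemma \ref{lem:uniserial} both $P(a)$ and $I(a)$ are uniserial; moreover $socP(a)$ is simple, say $S(j)$, and I would chase this single composition series forward. Condition (ii) of $(\ast\ast)$ forces the successor $j$ to lie outside $Q_0''$, so either $j$ is already a sink, or $j$ lies on a right arm and I continue: since $I(j)$ is uniserial (again Lemma \ref{lem:uniserial}) and $P(a)$ is uniserial, the unique path from $a$ to $j$ is a subpath of the unique path from $a$ to $soc P(a)$, and iterating along the right arm I reach a sink $c$ with $P(a)$ having socle $S(c)$. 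Uniseriality of $P(a)$ and of $I(c)$ (Lemma \ref{lem:uniserial}, since $c\notin Q_0''$) plus $socP(a)=S(c)$ and $topI(c)=S(a)$ — the latter from Remark \ref{rem:**}, as $topI(c)$ must be a source and the only source reaching $c$ through this maximal chain is $a$ — show the path is maximal at both ends, so Lemma \ref{lem:new maximal path} gives $P(a)\cong I(c)$.

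If instead the source $a$ lies in $Q_0''$, then $a$ is a source of the armless tree $Q'$ as well, and condition (i) of $(\ast\ast)$ says $R'$ satisfies $(\ast)$; by Corollary \ref{lem:conditions *} there is a unique maximal path in $Q'$ from $a$ to a unique sink $c$ with $P'(a)\cong I'(c)$. I would then argue that adjoining arms does not disturb this: since $R'\subseteq R$ and $R$ satisfies $(\ast\ast)$, no new composition factors appear in $P(a)$ below $S(a)$ — the conditions $(\ast\ast)$ (ii)/(iii) precisely prevent the arms from contributing new socle summands to projectives indexed in $Q_0'$ — so $socP(a)=S(c)$ still, $P(a)$ and $I(c)$ remain uniserial (no branching vertex of $Q$ interferes along this maximal path, by $(\ast)$), and again Lemma \ref{lem:new maximal path} applies. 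The dual argument, starting from a sink $c$ and walking backwards using condition (iii), handles the injective side and shows $I(c)$ is projective-injective; by the source-sink bijection (Corollary \ref{lem:conditions *} together with Lemma \ref{lem:equal ss in Q}) the two analyses are consistent and exhaust all sources and all sinks.

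The main obstacle I anticipate is the bookkeeping in the "mixed" case where a source $a$ of $Q_0''$ has its maximal path leave $Q'$ and travel down a right arm, or where a source on a left arm feeds into the armless core and then out again: one must verify that the concatenated path is genuinely maximal in $KQ/\mci$ (not killed by a relation, and not a proper subpath of a longer nonzero path) and that uniseriality is preserved at the junction with the branching vertex. This is exactly where conditions $(\ast\ast)$ (ii) and (iii), the hypothesis $R\cap S'=R'$ being \emph{not} needed here but $R'\subseteq R$ being essential, and Remark \ref{rem:**} must be combined carefully; I would isolate this as the key step and phrase it as: along the unique (tree) path realizing $socP(a)$, every intermediate vertex lies outside $Q_0''$ except possibly at the single crossing through the core, and at that crossing $(\ast)$ guarantees the core segment is itself maximal and uniserial, so the whole path inherits maximality and $P(a)$ inherits a simple socle.
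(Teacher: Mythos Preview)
Your core approach matches the paper's: apply condition~(ii) of $(\ast\ast)$ to a source $a$ to get $socP(a)=S(j)$ with $j\notin Q_0''$, invoke Lemma~\ref{lem:uniserial} for uniseriality, observe the path from $a$ to $j$ is maximal, and conclude $P(a)\cong I(j)$ via Lemma~\ref{lem:new maximal path}; then argue dually for sinks using condition~(iii). However, you overcomplicate in two ways and introduce a genuine confusion in the process.

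First, your Case~2 is vacuous. A source $a$ of $Q$ can never lie in $Q_0''$: either $a$ sits on a (possibly trivial) left arm, hence in $Q_0\smallsetminus Q_0'$ or among the sources of $Q'$, and in either case $a\notin Q_0''$ by definition. So the entire discussion of sources inside the armless core, and the worry about ``mixed'' paths crossing the core, is unnecessary.

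Second, and more seriously, your ``iteration along the right arm'' in Case~1 is both unneeded and incoherent as written. Condition~(ii) already hands you $socP(a)=S(j)$; there is nothing to iterate, since $j$ is by definition the endpoint of the longest nonzero path out of $a$. Your sentence ``the unique path from $a$ to $j$ is a subpath of the unique path from $a$ to $socP(a)$'' is circular, because these are the same path. More importantly, you do not need $j$ to be a sink at all: the lemma only asks that $P(a)$ be projective-injective, and $P(a)\cong I(j)$ for \emph{any} $j$ establishes this. The paper simply stops here. Likewise for a sink $c$: condition~(iii) gives $topI(c)=S(i)$ with $i\notin Q_0''$, Lemma~\ref{lem:uniserial} makes $P(i)$ uniserial, the path from $i$ to the sink $c$ is maximal, and $I(c)\cong P(i)$. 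No source--sink bijection or concatenation argument is required.
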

\begin{proof}
 We assume that $R$ satisfies the conditions $(\ast\ast)$. Let $a$ be an arbitrary source in $Q$. 
Then from (ii)   of the conditions $(\ast\ast)$,  $socP(a)=S(j)$ for some successor $j\in Q_0\smallsetminus Q_0''$ of $a$.
 Because $j\notin Q_0''$, $I(j)$ is uniserial by Lemma \ref{lem:uniserial}.  Now the path starting from the source
  $a$ to $j$ is necessarily maximal, and hence $P(a)\cong I(j)$ by Lemma \ref{lem:new maximal path}.

  Next we show that $I(c)$ is projective for each sink $c$ in $Q$. Obviously, $c$ belongs to some right arm. 
 It follows from (iii) of the conditions $(\ast\ast)$
  that $topI(c)=S(i)$ for some predecessor $i\in Q_0\smallsetminus Q_0''$ of $c$. By Lemma \ref{lem:uniserial}, $P(i)$ is uniserial.
 Since the path from  $i$ to the sink $c$ is maximal, it follows  that $P(i)\cong I(c)$. 
 \end{proof}

\begin{lem}
 \label{lem:injective envelope arms}
 Let $R$ and $R'$ be sets of zero relations on $Q$ and $Q'$ respectively, 
such that $R'\subseteq R$. If $R$  satisfies the conditions $(\ast\ast)$, 
then  $EP(i)$ is projective for each $i\in Q_0$.
\end{lem}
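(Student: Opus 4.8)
The plan is to show that for every vertex $i\in Q_0$ the injective envelope $EP(i)=E(socP(i))$ splits as a direct sum of indecomposable injective $B$-modules, each of which is projective; since injective envelopes are additive over $socP(i)$, this amounts to exhibiting, for each simple summand $S(v)$ of $socP(i)$, that $I(v)$ is projective. I would organise the argument according to whether $i$ lies in an arm of $Q$ or in $Q_0''$, and use Lemma \ref{lem:proj-inj} for the ``sink'' summands and Lemma \ref{lem:new maximal path} (via Lemma \ref{lem:uniserial}) for the ``arm'' summands.

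First suppose $i\in Q_0\smallsetminus Q_0''$, so $i$ lies on a left or a right arm. By Lemma \ref{lem:uniserial} the module $P(i)$ is uniserial, hence $socP(i)=S(v)$ is simple. If $i$ is in a left arm, condition (ii) of $(\ast\ast)$ gives $v\notin Q_0''$ at once; if $i$ is in a right arm, then since arms are linearly oriented every successor of $i$ lies on the same right arm, so again $v\notin Q_0''$. Then $I(v)$ is uniserial by Lemma \ref{lem:uniserial}. Letting $w$ be the vertex with $topI(v)=S(w)$, I would check — using condition (ii) or (iii) of $(\ast\ast)$ together with the uniseriality of $P(i)$ — that $w\notin Q_0''$ and that the path from $w$ to $v$ realizing $I(v)$ is maximal: forward maximality propagates from $socP(i)=S(v)$ along the uniserial string, and backward maximality is exactly the description of $topI(v)$. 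Lemma \ref{lem:new maximal path} then yields $P(w)\cong I(v)$, so $EP(i)=I(v)$ is projective.

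Finally suppose $i\in Q_0''$. Here $socP(i)=\bigoplus_k S(v_k)$, where each $v_k$ is the target of a maximal path starting at $i$. The crucial point is that no $v_k$ can lie in $Q_0''$: because the arms of $Q$ are pendant and the relations of $R$ supported on $Q'$ are precisely those of $R'$, a maximal path from $i$ that stays inside $Q'$ must, by condition (ii) of $(\ast)$ applied to the $B'$-module $P(i)$, terminate at a sink of $Q'$, while a maximal path that leaves $Q'$ must run into a right arm; either way $v_k\notin Q_0''$. For those $v_k$ that are sinks of $Q$, $I(v_k)$ is projective by Lemma \ref{lem:proj-inj}; the remaining $v_k$ lie on a right arm, and for them $I(v_k)$ is projective by the same reasoning as in the previous paragraph, using condition (iii) of $(\ast\ast)$. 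Hence $EP(i)=\bigoplus_k I(v_k)$ is projective. The step I expect to be the main obstacle is the bookkeeping around \emph{maximality} in the exact sense of Lemma \ref{lem:new maximal path} (``not a proper subpath of any nonzero path'') — in particular, ruling out an extension of the string realizing $I(v_k)$ — together with making precise the transfer of the conditions $(\ast)$ from $B'$ to $B$, which rests on the relations of $R$ lying in $Q'$ being exactly $R'$.
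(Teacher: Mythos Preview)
Your proposal is correct and follows essentially the same approach as the paper's proof: locate $socP(i)$ via the conditions $(\ast\ast)$, check that each simple summand $S(v)$ has $v\notin Q_0''$, and then use Lemmas \ref{lem:uniserial} and \ref{lem:new maximal path} to realise each $I(v)$ as some $P(h)$. The only cosmetic difference is the case split --- the paper divides into ``$i$ in a left arm'' versus ``$i$ not in a left arm'' (grouping right arms together with $Q_0''$) rather than your arms/$Q_0''$ dichotomy --- and the concern you flag about needing $R\cap S'=R'$ to transfer condition $(\ast)$ from $B'$ to $B$ is a point the paper's own proof also elides with the phrase ``As the conditions $(\ast)$ also hold''.
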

\begin{proof}
  We assume that $R$ satisfies the conditions $(\ast\ast)$. Let $i\in Q_0$ be an arbitrary 
vertex.

First, let $i$ belong to some left arm of $Q$. 
From (ii) of the conditions $(\ast\ast)$, $socP(i)=S(j)$ for some successor 
$j\in Q_0\smallsetminus Q_0''$. It follows from Lemma \ref{lem:uniserial}, that $I(j)$ is also uniserial. 
  Now if the path from $i$ to $j$ is maximal, then Lemma \ref{lem:new maximal path} gives  $P(i)\cong I(j)$.
 Otherwise,  there exists in the left arm a predecessor $h$ of $i$ such that 
  the path from $h$ to $j$ is maximal. It follows again from 
  Lemma \ref{lem:new maximal path}, that $P(h)\cong I(j)$. This implies that $EP(i)=I(j)$ is projective. 
Hence $EP(i)$ is projective for each $i$ in a left arm.

Now suppose that  $i$ does not belong to any left arm of $Q$.
 Then $P(i)$ may or may not be uniserial. As the conditions $(\ast)$ also hold, we can assume that $socP(i)=\oplus S(j)$, 
 where each $j$ belongs to some right arm  and is the target of some maximal path. 
 From (iii) of the conditions $(\ast\ast)$, 
we have  $topI(j)=S(h)$ for some predecessor
 $h\notin Q_0''$ of  $j$. Since each $P(h)$ is uniserial and 
paths from $h$ to $j$ are maximal, we have  $P(h)\cong I(j)$ by Lemma \ref{lem:new maximal path}.
 Hence $EP(i)=\oplus I(j)$ is projective. 
\end{proof}

\begin{prp}
 Let $R$ and $R'$ be sets of zero relations on $Q$ and $Q'$ respectively, 
such that $R'\subseteq R$. If $R$ satisfies the conditions $(\ast\ast)$,
then $dom.dimB =1$.
\end{prp}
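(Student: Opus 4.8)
The plan is to follow the two-step pattern of Proposition \ref{prp:dd1}, replacing the ingredients for trees without arms by their counterparts for trees with arms.

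First I would establish $dom.dimB\geq1$. Since $R$ satisfies the conditions $(\ast\ast)$, Lemma \ref{lem:injective envelope arms} shows that $EP(i)$ is projective for every $i\in Q_0$; hence the first term $I_1=\bigoplus_{i\in Q_0}EP(i)$ of the minimal injective resolution of ${}_BB$ is projective-injective, so $dom.dimB\geq1$.

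Next I would show $dom.dimB\leq1$ by producing one indecomposable projective whose dominant dimension equals $1$; since $dom.dimB=\min_i dom.dim P(i)$, this suffices. By Lemma \ref{lem:equal ss in Q} the quiver $Q$ has equally many sources and sinks, so I would fix a sink $c$ with its unique immediate predecessor $c^-$ (unique because a vertex $b(k,0)$ with $k\geq2$ would force $dom.dimB=0$, contradicting $dom.dimB\geq1$). By Lemma \ref{lem:proj-inj}, $I(c)$ is projective-injective, and by Lemma \ref{lem:uniserial} it is uniserial (as $c\notin Q_0''$). The simple projective $P(c)=S(c)$ is not injective, and the start of its minimal injective resolution is
\[0\to P(c)\to I(c)\to I(c^-)\to\cdots,\]
because $EP(c)=E(S(c))=I(c)$ and the cokernel $I(c)/socI(c)$ is uniserial with socle $S(c^-)$, whose injective envelope is $I(c^-)$. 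So $I_1=I(c)$ is projective-injective and $I_2=I(c^-)$; it then remains to argue that $I(c^-)$ is not projective, which forces $P(c)$ to have dominant dimension $1$ and hence, with the first step, $dom.dimB=1$.

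The main obstacle is the non-projectivity of $I(c^-)$. As in Proposition \ref{prp:dd1} one argues by contradiction: if $I(c^-)\cong P(w)$ then $w$ is the top vertex of $I(c^-)$ and $socP(w)=S(c^-)$, yet $c^-$ is never a sink, being the source of the arrow $c^-\to c$; condition (ii) of $(\ast)$ forbids $S(c^-)$ from occurring in $socP(w)$ when $w\in Q_0'$, and the conditions $(\ast\ast)$ handle the case of $w$ lying in an arm. Carrying this out cleanly needs a case distinction on whether the right arm through $c$ is trivial, so that $c^-$ is a branching vertex --- exactly the situation of Proposition \ref{prp:dd1} --- or non-trivial, so that $c^-$ lies in that arm and Lemma \ref{lem:uniserial} applies to $I(c^-)$ directly; in the latter case one may instead have to run the argument at a projective chosen further up the right arm and check that the process does terminate at dominant dimension $1$ rather than stalling at a larger value. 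That termination point is the delicate part; everything else transcribes almost verbatim from the trees-without-arms proof.
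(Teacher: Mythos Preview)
Your first step ($dom.dim B\geq1$ via Lemma~\ref{lem:injective envelope arms}) matches the paper exactly. The second step, however, diverges: the paper does not use $P(c)$ at a sink but rather $P(b)$ at a branching vertex $b\in Q_0''$ with $P(b)$ not uniserial, obtaining the resolution
\[0\to P(b)\to I(j_1)\oplus I(j_2)\to I(b)\to\cdots,\]
where $I(b)$ is non-projective because its top is not simple. No case distinction on arm lengths is needed.

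Your route has a genuine gap, precisely where you sense it. The injective $I(c^-)$ \emph{can} be projective when the right arm is non-trivial, and then $ddP(c)>1$. Concretely, take $Q$ with branching vertex $2$, left arms $-1\to 0\to 1\to 2$ and $a\to 2$, right arms $2\to 3\to 4\to 5$ and $2\to b$; let $R'=\{\gamma\beta,\epsilon\alpha\}$ on $Q'$ (this satisfies $(\ast)$) and enlarge to $R$ by adding the two zero relations killing the length-five paths from $0$ to $5$ and from $-1$ to $4$. One checks that $(\ast\ast)$ holds and that $I(5)\cong P(1)$, $I(4)\cong P(0)$, $I(3)\cong P(-1)$. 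The minimal injective resolution of $P(5)=S(5)$ is
\[0\to S(5)\to I(5)\to I(4)\to I(0)\to 0\]
with $I(0)$ the first non-projective term, so $ddP(5)=2$; the same happens for $P(4)$. Your appeal to condition~(ii) of $(\ast)$ does not block this: $(\ast)$ speaks about $B'$-modules, while in $B$ the socle of $P(w)$ for $w$ in a non-trivial left arm is governed by $(\ast\ast)$(ii), which only demands $socP(w)=S(i')$ with $i'\notin Q_0''$ --- and $c^-=4$ indeed lies outside $Q_0''$.

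Thus your argument does not terminate at $1$ for an arbitrary sink. To repair it you would have to choose the sink, or a right-arm vertex, so that its predecessor is a branching vertex with at least two incoming arrows --- at which point you are essentially reproducing the paper's argument. The paper's direct choice of the branching vertex is what makes the non-projectivity of $I_2$ immediate.
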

\begin{proof}
 First we assume that $R$ satisfies the conditions $(\ast\ast)$. Then it follows immediately from Lemma 
\ref{lem:injective envelope arms} that the injective envelope $EP(a)$ 
of a $B$-module $P(a)$ is projective for each vertex $a$ in $Q_0$. This implies that $dom.dimB\geq1$.

  Now to show that $dom.dimB\ngtr1$, let $b\in Q_0''$ be a branching vertex such that $P(b)$ is not uniserial.
 $I(b)$ is also not uniserial, since $b\in Q_0''$ and $R'\subsetneq R$. We assume that $socP(b)=S(j_1)\oplus S(j_2)$, 
where $j_1\neq j_2$, belonging to two distinct right arms, are the targets of some maximal paths. Therefore, 
there exist $i_1\neq i_2$ in the respective two distinct left arms such that
\[P(i_1)\cong I(j_1)=\begin{matrix}
i_1\\
i_1^{+}  \\
\vdots\\
b\\
\vdots\\
j_1
\end{matrix}
\quad
\text{ and }
\quad
P(i_2)\cong I(j_2)=\begin{matrix}
i_2\\
i_2^{+}  \\
\vdots\\
b\\
\vdots\\
j_2
\end{matrix}\]
  where $i_1^{+}$ and $i_2^{+}$ are the immediate successors of $i_1$ and $i_2$ respectively. 
  We obtain the minimal injective resolution of $P(b)$ as \[0\rightarrow P(b)\rightarrow I(j_1)\oplus I(j_2)\rightarrow I(b)\rightarrow \]
  where $I(b)$ is not projective, and hence $dom.dimB=1$.

\end{proof}

We summarize the case of trees with arms as 
\begin{thm}
\label{thm:with arms}
 Let $R$ and $R'$ be sets of zero relations on $Q$ and $Q'$ respectively, 
such that $R'\subseteq R$ and $R\cap S'=R'$. Then
 
     \[
       dom.dimB =
    \begin{cases}
       1 &\text{ if } R \text{ satisfies the conditions } (\ast\ast)\\
	0 &\text{ otherwise. }
       \end{cases}
    \]
  
\end{thm}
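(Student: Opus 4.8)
The plan is to combine the propositions proved immediately before this theorem into the two-case statement. The theorem is really just a summary: one direction handles the case when $R$ satisfies $(\ast\ast)$, the other handles when it does not, and in each case the value of $dom.dimB$ has already been pinned down.

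First I would invoke the Proposition immediately preceding the theorem statement: under the hypotheses $R'\subseteq R$ and $R\cap S'=R'$, if $R$ satisfies the conditions $(\ast\ast)$, then $dom.dimB=1$. This is exactly the first line of the displayed case distinction. Note that the hypothesis $R'\subseteq R$ needed by that Proposition is contained in the hypotheses of the present theorem, so nothing extra is required.

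Second I would invoke the other Proposition from this subsection: under the hypotheses $R'\subseteq R$ and $R\cap S'=R'$, if $R$ does not satisfy the conditions $(\ast\ast)$, then $dom.dimB=0$. Again the hypotheses of the theorem are precisely those required. These two Propositions are mutually exclusive and exhaustive — either $R$ satisfies $(\ast\ast)$ or it does not — so together they give the stated formula for $dom.dimB$ in all cases. There is no real obstacle here; the only thing to be careful about is to confirm that the side conditions $R'\subseteq R$ and $R\cap S'=R'$ assumed in the theorem match exactly what each Proposition needs (they do), and that the dichotomy on whether $R$ satisfies $(\ast\ast)$ is genuinely a dichotomy (it is, being the negation of a property). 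Thus the proof is a two-line appeal to the preceding results.

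\begin{proof}
This is an immediate consequence of the two Propositions of this subsection. Suppose first that $R$ satisfies the conditions $(\ast\ast)$. Since $R'\subseteq R$, the Proposition proved just above the statement gives $dom.dimB=1$. Suppose instead that $R$ does not satisfy the conditions $(\ast\ast)$. Since $R'\subseteq R$ and $R\cap S'=R'$, the corresponding Proposition gives $dom.dimB=0$. As these two cases are exhaustive and mutually exclusive, the claimed formula for $dom.dimB$ follows.
\end{proof}
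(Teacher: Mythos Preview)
Your proposal is correct and matches the paper's approach exactly: the paper introduces this theorem with the phrase ``We summarize the case of trees with arms as'' and gives no separate proof, treating it as the evident combination of the two Propositions you cite. Your observation that the $dom.dimB=1$ Proposition needs only $R'\subseteq R$ while the $dom.dimB=0$ Proposition needs both $R'\subseteq R$ and $R\cap S'=R'$ is also accurate.
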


A straightforward consequence is the following
\begin{cor}
Let $R$ and $R'$ be sets of zero relations on $Q$ and $Q'$ respectively, 
such that $R'\subseteq R$ and $R\cap S'=R'$. Then{\small
     \[
       dom.dimB =
    \begin{cases}
       dom.dimB'=1 &\text{ if } R \text{ satisfies the conditions } (\ast\ast)\\
	dom.dimB'=0 &\text{ if } R' \text{ does not satisfy the conditions } (\ast).
       \end{cases}
    \]}

\end{cor}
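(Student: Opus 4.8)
The plan is to obtain both cases of the displayed formula directly from Theorem~\ref{thm:with arms}, Theorem~\ref{thm:without arms} and Proposition~\ref{prp:ddB=0}: the Corollary merely refines the dichotomy of Theorem~\ref{thm:with arms} by recording the value of $dom.dimB'$ alongside that of $dom.dimB$, so the whole argument is a matter of assembling results already proved.

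First I would treat the case in which $R$ satisfies the conditions $(\ast\ast)$. By clause (i) of Definition~\ref{dfn:**}, this forces $R'$ to satisfy the conditions $(\ast)$, so Theorem~\ref{thm:without arms} gives $dom.dimB'=1$. Simultaneously, since the standing hypotheses $R'\subseteq R$ and $R\cap S'=R'$ are precisely those required by Theorem~\ref{thm:with arms}, that theorem yields $dom.dimB=1$. This establishes the first line.

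Next I would treat the case in which $R'$ does not satisfy the conditions $(\ast)$. Then Theorem~\ref{thm:without arms} immediately gives $dom.dimB'=0$. For $dom.dimB$, I would observe that $R$ cannot satisfy the conditions $(\ast\ast)$, since clause (i) of Definition~\ref{dfn:**} would demand that $R'$ satisfy $(\ast)$, contrary to hypothesis; hence the ``otherwise'' branch of Theorem~\ref{thm:with arms} applies and $dom.dimB=0$. Equivalently, one may bypass Theorem~\ref{thm:with arms} here and invoke Proposition~\ref{prp:ddB=0}: use part (a) if $R'=R$, and part (b) together with $R\cap S'=R'$ if $R'\subsetneq R$, so that $dom.dimB'=0$ propagates to $dom.dimB=0$.

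Finally I would note that the two hypotheses under consideration are mutually exclusive --- the first presupposes that $R'$ satisfies $(\ast)$, the second that it does not --- so the two lines combine into the stated case distinction with nothing left to check. I do not anticipate any genuine obstacle: the only point meriting a moment's attention is the verification that $R'\subseteq R$ and $R\cap S'=R'$ are exactly the hypotheses of the cited theorems, so that no auxiliary condition is silently imposed. The Corollary is, as the text says, a straightforward consequence.
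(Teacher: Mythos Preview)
Your proposal is correct and matches the paper's intent exactly: the paper gives no proof at all, merely declaring the Corollary ``a straightforward consequence'' of Theorem~\ref{thm:with arms}, and your argument spells out precisely the assembly of Theorem~\ref{thm:with arms}, Theorem~\ref{thm:without arms}, and Proposition~\ref{prp:ddB=0} that is implicitly meant.
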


\begin{ack-plain}
 The author is heartily thankful to his PhD advisor Prof. Dr. Steffen Koenig (at IAZ, Universit\"at Stuttgat) for the valued suggestions and comments
that enabled the author to improve all the early drafts of this article.
\end{ack-plain}

\bigskip

Present address:\\
Institut f\"ur Algebra und Zahlentheorie (IAZ),\\ Universit\"at Stuttgart, Pfaffenwaldring 57, D-70569 Stuttgart, Germany.\\
e-mail: \texttt{mabrar@mathematik.uni-stuttgart.de}

Permanent address:\\
Department of Mathematics,\\ Kohat University of Science and Technology (KUST). KPK, 26000 Kohat, Pakistan.\\
e-mail: \texttt{abrarrao@gmail.com}

\end{document}